\documentclass[hidelinks,onefignum,onetabnum]{siamart251216}

\usepackage{braket,amsfonts}
\usepackage{multirow}
\usepackage{array,amssymb}
\usepackage{subfigure}
\usepackage{pgfplots}
\usepackage{algorithmic}
\usepackage{graphicx,epstopdf}
\usepackage{amsopn,enumerate}
\usepackage{xspace}
\usepackage{bold-extra}
\usepackage[most]{tcolorbox}
\usepackage{math}
\usepackage[sort]{cite}
\usepackage{mathtools}
\usepackage{enumitem}
\usepackage[format=plain,font={small,it},skip=2pt]{caption}
\usepackage{xargs}
\usepackage[colorinlistoftodos,prependcaption,textsize=tiny]{todonotes}
\usepackage{math}
\allowdisplaybreaks[1]
\newsiamremark{assumption}{Assumption}
\newsiamremark{remark}{Remark}

\title{A Trust-Region Interior-Point Stochastic Sequential Quadratic Programming Method
}

\author{Yuchen Fang\thanks{Department of Mathematics, University of California, Berkeley (\email{yc\_fang@berkeley.edu}).}
\and Jihun Kim\thanks{Department of IEOR, University of California, Berkeley (\email{jihun.kim@berkeley.edu})}
\and Sen Na\thanks{School of Industrial and Systems Engineering, Georgia Tech (\email{senna@gatech.edu}).}
\and James Demmel\thanks{Department of EECS, University of California, Berkeley (\email{demmel@berkeley.edu}).}
\and Javad Lavaei\thanks{Department of IEOR, University of California, Berkeley (\email{lavaei@berkeley.edu}).}}

\ifpdf
\hypersetup{
pdftitle={Trust-Region Interior-Point SSQP},
pdfauthor={Y. Fang, J. Kim, S. Na, J. Demmel, J. Lavaei}
}
\fi

\renewcommand{\proofbox}{\hfill\ensuremath{\blacksquare}}

\begin{document}

\maketitle

\begin{abstract}
In this paper, we propose a trust-region interior-point stochastic sequential quadratic programming (TR-IP-SSQP) method for solving optimization problems with a stochastic objective and deterministic nonlinear equality and inequality constraints. In this setting, exact evaluations of the objective function and its gradient are unavailable, but their stochastic estimates can be constructed. In particular, at each iteration our method builds stochastic oracles, which estimate the~objective value and gradient to satisfy proper adaptive accuracy conditions with a fixed probability.
To handle inequality constraints, we adopt an interior-point method (IPM), in which the barrier parameter follows a prescribed decaying sequence. Under standard assumptions, we establish global almost-sure convergence of the proposed method to first-order stationary points. We implement the method on~a subset of problems from the CUTEst test set, as well as on logistic regression problems, to demonstrate its practical performance.
\end{abstract}

\begin{keywords}
constrained stochastic optimization, nonlinear optimization, trust-region method, interior-point method, sequential quadratic programming
\end{keywords}

% REQUIRED
%\begin{MSCcodes}

%\end{MSCcodes}

\section{Introduction}\label{sec:1}

We consider solving optimization problems of the form\vskip-0.5cm
\begin{equation}\label{Intro_StoProb}
\min_{\bx\in\mR^d}\;f(\bx)=\mE_{\P}[F(\bx;\xi)],\quad\;\text{s.t.}\;\; c(\bx)=\b0,\quad h(\bx)\leq \b0,
\end{equation}\vskip-0.15cm
\noindent 
where $f:\mR^d\to\mR$ is continuously differentiable and defined as the expectation of a stochastic realization $F(\bx;\xi)$. The functions $c:\mR^d\to\mR^m$ and $h:\mR^d\to\mR^n$ represent continuously differentiable deterministic equality and inequality constraints, respectively. The random variable $\xi$ follows distribution $\P$. We assume that neither the objective value $f(\bx)$ nor its gradient $\nabla f(\bx)$ can be evaluated exactly, but must instead be estimated via sampling. Problem \eqref{Intro_StoProb} arises in a wide range of applications, including optimal control, constrained machine learning, and safe reinforcement learning \citep{Betts2010Practical, Chen2018Constraint, Achiam2017Constrained, Cuomo2022Scientific}.

Interior-point methods (IPMs) constitute one of the most effective frameworks for solving inequality-constrained optimization problems. They have demonstrated remarkable success in deterministic optimization and form the foundation of widely~used solvers such as Ipopt \citep{Waechter2005Implementation}, Knitro \citep{Byrd2006Knitro}, and LOQO \citep{Vanderbei1999Loqo}. Numerous studies have integrated interior-point techniques within deterministic optimization frameworks~\citep{Byrd1999Interior, Byrd2000Trust, Curtis2010Interior, Waechter2005Implementation}, as well as in deterministic settings with noisy function evaluations \citep{Curtis2025Interior}.

Extensions to stochastic optimization have also emerged in recent years. For example, \cite{badenbroek2022complexity, narayanan2016randomized} developed IPMs for minimizing linear functions over convex sets. Subsequent work has expanded settings to nonconvex stochastic optimization. In particular, \cite{Curtis2023Stochastic} investigated bound-constrained problems, which was later extended in \cite{curtis2026single} to incorporate linear equality and nonlinear inequality constraints. Under suitable decay conditions on the barrier parameter, these methods establish liminf-type almost-sure convergence guarantees and demonstrate favorable empirical performance.

Despite these advances, existing approaches \cite{Curtis2023Stochastic, curtis2026single} still leave room for further refinement. As gradient-based methods, they may face challenges in effectively handling nonlinear constraints; and their analyses also rely on unbiased gradient estimators with bounded variance, which restricts the range of admissible sampling mechanisms. Moreover, these methods involve multiple interdependent parameter sequences, potentially increasing the effort of practical tuning; and they also enforce strict feasibility at every iteration, thereby necessitating auxiliary procedures to obtain a feasible initial point. Such requirements may introduce additional complexity in both theoretical analysis and implementation, as noted in deterministic optimization settings \citep{Wright1997Primal}.

To address nonlinear constrained optimization, Sequential Quadratic Programming (SQP) and its variants are among the most effective techniques in the deterministic setting \citep{Boggs1995Sequential, Byrd1987Trust, Heinkenschloss2014Matrix}. By performing local linearizations of the constraints at each iteration, SQP methods exhibit a strong capability for handling nonlinear constraints. Motivated by the large scale and intrinsic uncertainty of modern applications, there has been substantial interest in Stochastic SQP (SSQP) methods. In particular, recent adaptive-sampling SSQP methods \cite{Na2022adaptive, Na2021Inequality, Qiu2023Sequential, Fang2024Trust, Scheinberg2025Stochastic, Berahas2025Sequential, Fang2025High} eliminate the need for unbiased objective estimates. Instead, they construct \textit{stochastic oracles} that enforce objective estimates to satisfy adaptive accuracy conditions with a fixed probability. Such oracles encompass a broader range of sampling mechanisms and problem settings.

Despite significant progress for equality-constrained problems, SSQP methods for nonlinear inequality-constrained optimization are still relatively limited. Existing approaches are predominantly built on line-search methods \citep{Curtis2023Sequential, Na2021Inequality, Qiu2023Sequential, Berahas2025Retrospective}, while the designs built on trust-region methods remain largely open. 
Compared with line-search methods, trust-region methods offer several advantages: the step direction and step length~are computed simultaneously, often enhancing robustness \citep[Chapter 4]{Nocedal2006Numerical}. Moreover, trust-region methods naturally accommodate indefinite Hessian approximations, enabling the direct utilization of curvature information. In contrast, line-search methods typically require positive definiteness of the Hessian approximation, thereby necessitating explicit Hessian modifications.

In this work, we develop a trust-region interior-point SSQP (TR-IP-SSQP) method with adaptive sampling to solve nonlinear inequality-constrained optimization problems. Our contributions are summarized as follows:

\vspace{0.15cm}
\begin{enumerate}[leftmargin=1.8em]

\item We extend the trust-region SSQP method in \citep{Fang2024Trust} to nonlinear inequality-constrained optimization. This extension is nontrivial. Interior-point methods require that the slack variables must remain \textit{deterministically} positive, whereas their updates are \textit{stochastic}. To address this issue, we modify the step computation to explicitly incorporate the update of slack variables and introduce a fraction-to-boundary condition to guarantee the positivity requirements. While such a condition is standard in deterministic interior-point methods \citep{Byrd1999Interior, Byrd2000Trust}, to the best of our knowledge, this is its first incorporation within stochastic optimization.

\item Compared with existing stochastic interior-point methods for nonlinear optimization \citep{curtis2026single, Curtis2023Stochastic}, our method adopts an adaptive sampling strategy, thereby permitting biased objective estimates and allowing gradient estimates with unbounded variance. Furthermore, our method is developed within a \emph{relaxed-feasibility} interior-point framework: strict feasibility of the constraints is not enforced at every iteration, and thus no auxiliary procedure is required to compute a feasible initial point. In addition, the method eliminates the need for multiple interdependent input sequences and imposes no conditions on the decay rate of the barrier parameter, while the approaches in \citep{curtis2026single, Curtis2023Stochastic} require carefully controlled barrier-parameter decay to guarantee convergence.

\item Our algorithm is built upon a trust-region SSQP framework rather than line-search-based SSQP methods \citep{Na2021Inequality, Qiu2023Sequential} or gradient-based approaches \citep{curtis2026single, Curtis2023Stochastic}. The integration of trust-region and SSQP techniques enables the direct use of second-order (Hessian) information without explicit matrix modifications and provides improved handling of nonlinear objective and constraint structures. The availability of second-order information can also lead to enhanced practical performance, particularly in nonconvex optimization settings.
\end{enumerate}

\vspace{0.15cm}

Under the assumption that the objective estimates satisfy proper adaptive accuracy conditions with a high but fixed probability, we establish \textit{global almost-sure convergence}, showing that a subsequence of iterates converges almost surely to first-order stationary points. This result aligns with existing guarantees for inequality-constrained stochastic optimization \cite{Na2021Inequality,curtis2026single, Curtis2023Stochastic}. We implement our algorithm on a subset of CUTEst datasets \citep{Gould2014CUTEst} and logistics regression problems to demonstrate its empirical performance.

\vspace{0.15cm}

\noindent\textbf{Notation.}
We use $\|\cdot\|$ to denote the $\ell_2$ norm for vectors and the operator norm for matrices. $I$ denotes the identity matrix and $\b0$ denotes the zero matrix/vector, whose dimensions are clear from the context.  For constraints $c(\bx): \mR^d\rightarrow\mR^m$ and $h(\bx): \mR^d\rightarrow\mR^n$, we let $G(\bx)\coloneqq \nabla c(\bx)^T\in\mR^{m\times d}$ and $J(\bx)\coloneqq \nabla h(\bx)^T\in\mR^{n\times d}$ be their Jacobian matrices, respectively. We use $\bx^{(i)}$ to denote the $i$-th entry of a vector $\bx$; $c^{(i)}(\bx)$ and $h^{(i)}(\bx)$ to denote the $i$-th equality and inequality constraint, respectively; and $(\cdot;\cdot)$ to denote the vertical concatenation of vectors. 
We let $\bar{(\cdot)}$ denote stochastic estimates and define $\barg(\bx)\coloneqq \bar{\nabla} f(\bx)$ to be the estimate of $\nabla f(\bx)$. Given positive sequences $\{ a_k, b_k\}$,~$a_k = \mathcal{O} (b_k)$ indicates that there exists $C >0$ such that
$a_k \leq  Cb_k$ for all large enough $k$.

\vspace{0.15cm}
\noindent\textbf{Structure of the paper.}
We introduce the interior-point method and step computation in Section \ref{sec:2}. In Section \ref{sec:3}, we introduce the probabilistic oracles and the algorithm design. We establish the global convergence guarantee in Section~\ref{sec:4} and numerical experiments are presented in Section \ref{sec:5}. Conclusions are drawn in Section \ref{sec:6}.

\section{Preliminaries}\label{sec:2}

The Lagrangian of Problem \eqref{Intro_StoProb} is given by $\L(\bx,\blambda,\btau)=f(\bx)+\blambda^T c(\bx)+\btau^T h(\bx)$, where $\blambda$ and $\btau$ denote the Lagrange multipliers associated with constraints $c(\bx)$ and $h(\bx)$, respectively. Under standard constraint qualifications, such as linear independence constraint qualification (LICQ), finding a first-order stationary point of Problem \eqref{Intro_StoProb} is equivalent~to~finding a triple $(\bx_*,\blambda_*,\btau_*)$ satisfying the KKT conditions: \vskip-0.5cm
\begin{equation}\label{KKT1}
\begin{aligned}
\nabla f(\bx_*)+G(\bx_*)^T\blambda_*+ J(\bx_*)^T\btau_* & =\b0,\quad
c(\bx_*) = \b0, \\
h(\bx_*)  \leq \b0, \quad
\btau_*  & \geq \b0, \quad
\btau_*^{(i)}h^{(i)}(\bx_*) = 0, \; i = 1,2,\cdots,n.    
\end{aligned}
\end{equation}\vskip-0.2cm
\noindent Following the interior-point paradigm, we associate Problem \eqref{Intro_StoProb} with a barrier problem by introducing a slack variable $\bs=(\bs^{(1)},\bs^{(2)},\ldots,\bs^{(n)})^T\in\mathbb{R}^n$ and augmenting the objective with a log-barrier term, where $\theta> 0$ is a barrier parameter:\vskip-0.5cm
\begin{equation}\label{BarrierProblem}
\begin{aligned}
\min_{\bx\in \mR^d,\bs\in\mR^n} &\quad  f(\bx)-\theta \sum_{i=1}^n \ln \bs^{(i)}\\
\text{s.t. }&\quad c(\bx)=0,\quad h(\bx)+\bs = 0, \quad \bs^{(i)}> 0, \; \forall i = 1,2,\cdots,n.    
\end{aligned}	
\end{equation}\vskip-0.2cm
Our algorithm repeatedly solves the barrier problem \eqref{BarrierProblem} approximately, while~driving the barrier parameter $\theta$ to zero. The Lagrangian of Problem \eqref{BarrierProblem} is $\L_{\theta}(\bx,\bs,\blambda,\btau)=f(\bx)-\theta \sum_{i=1}^n \ln \bs^{(i)}+\blambda^Tc(\bx)+\btau^T(h(\bx)+\bs)$. For a given barrier parameter $\theta$, a~first-order stationary point of \eqref{BarrierProblem} is a quadruple $(\bx_\theta,\bs_\theta,\blambda_\theta,\btau_\theta)$~satisfying
\begin{equation}\label{1st_order_point}
\nabla \L_{\theta}(\bx_{\theta}, \bs_{\theta}, \blambda_{\theta},\btau_{\theta})
= \begin{pmatrix}
\nabla f(\bx_{\theta})+G(\bx_{\theta})^T\blambda_{\theta} + J(\bx_{\theta})^T\btau_{\theta}  \\
-\theta S_{\theta}^{-1}\boldsymbol{e} + \btau_{\theta}   \\
c(\bx_{\theta})  \\
h(\bx_{\theta})+\bs_{\theta} 
\end{pmatrix} = \b0,
\end{equation}
where $\boldsymbol{e}\in\mR^n$ denotes the all-one vector and $S_{\theta} = \text{diag}(\bs_\theta^{(1)},\bs_\theta^{(2)},\cdots,\bs_\theta^{(n)})$.

In deterministic optimization, interior-point methods are typically implemented using a nested-loop structure \citep{Byrd1999Interior,Byrd2000Trust,Potra2000Interior,Wright1997Primal}. The outer loop reduces the barrier parameter, while the inner loop approximately solves \eqref{BarrierProblem} for a fixed $\theta$ until a stationarity condition, such as $\|\nabla \L_{\theta}(\bx,\bs,\blambda,\btau)\|\le \epsilon$, is satisfied. In the stochastic setting, however, exact evaluations of the objective, gradient, and KKT residuals are unavailable, rendering such termination criteria unreliable and complicating the nested-loop design. To address this issue, we adopt a single-loop framework in which the barrier parameter~follows a predetermined diminishing sequence $\{\theta_k\}\downarrow 0$.

At iteration $k$, we denote $\nabla f_k=\nabla f(\bx_k)$, $\barg_k=\barg(\bx_k)$ and $\L_{\theta_k}=\L_{\theta_k}(\bx_k,\bs_k,\blambda_k,\btau_k)$, where $\theta_k$ is the current barrier parameter. We define $c_k,h_k,G_k,J_k,S_k$ analogously. Throughout the paper, we assume that $G_k=\nabla c(\bx_k)^T$ has full row rank, as stated in~Assumption \ref{assump1}, and define the \emph{true} Lagrange multipliers at iteration $k$ as
\begin{equation}\label{def:true_Lag_para}
\btau_k=\theta_k S_k^{-1}\boldsymbol{e},\qquad
\blambda_k=-[G_kG_k^T]^{-1}G_k\bigl(\nabla f_k+\theta_k J_k^T S_k^{-1}\boldsymbol{e}\bigr).
\end{equation}

\subsection{TR-IP-SSQP subproblem}

At iteration $k$, given the current iterate~$(\bx_k,\bs_k)$, the barrier parameter $\theta_k$, and the trust-region radius $\Delta_k$, we formulate a TR-IP-SSQP subproblem for Problem~\eqref{BarrierProblem} as\vskip-0.5cm
\begin{equation}\label{def:SQPsubproblem_1}
 \begin{aligned}
\min_{\Delta\bx_k\in\mathbb{R}^d,\;\Delta\bs_k\in\mathbb{R}^n}\;\;
& \barg_k^T\Delta\bx_k - \theta_k \boldsymbol{e}^TS_k^{-1}\Delta\bs_k
+ \frac{1}{2}\Delta\bx_k^T\barH_k\Delta\bx_k
+ \frac{1}{2}\theta_k\Delta\bs_k^T S_k^{-2}\Delta\bs_k \\
\text{s.t.}\quad
& c_k + G_k\Delta\bx_k = \b0,\quad
h_k + \bs_k + J_k\Delta\bx + \Delta\bs_k = \b0, \\
& \|(\Delta\bx_k; S_k^{-1}\Delta\bs_k)\| \le \Delta_k,\quad
\bs_k + \Delta\bs_k \ge (1-\epsilon_s)\bs_k,
\end{aligned}   
\end{equation}
where $\barH_k$ is a (symmetric) approximation to the Hessian $\nabla^2_{\bx}\L_k$.

The formulation of the subproblem follows that used in deterministic interior-point methods \citep{Byrd1999Interior, Byrd2000Trust, Curtis2010Interior}. In contrast to recent stochastic interior-point methods that impose a user-specified lower-bounding sequence on the slack variables \citep{curtis2026single, Curtis2023Stochastic}, we employ the fraction-to-boundary condition $\bs_k + \Delta\bs_k \ge (1-\epsilon_s)\bs_k$ to maintain positivity of the slack variables. This design reduces the number of tuning parameters and aligns the design with deterministic interior-point methods.
By the development of our algorithm in Section~\ref{sec:3}, when iteration $k$ is successful, the next iterate satisfies $\bs_{k+1}=\bs_k+\Delta\bs_k$, so the condition can equivalently be written as $\bs_{k+1}\ge(1-\epsilon_s)\bs_k$. This mechanism prevents the iterates from approaching the boundary of the feasible region too aggressively.
In practice, $\epsilon_s$ is chosen close to one, e.g., $\epsilon_s=0.9$ or $0.99$ as suggested in \citep{Byrd2000Trust}.

\subsection{Computing the (rescaled) trial step}\label{sec: Computing the (rescaled) trial step}

We now describe how to compute the (rescaled) trial step. For notational convenience, we denote the rescaled trial step~as\vskip-0.7cm
\begin{equation*}
\tilde{\bd}_k \coloneqq  ( \Delta\bx_k ; \tilde{\Delta}\bs_k) = (\Delta\bx_k ; S_k^{-1}\Delta\bs_k).
\end{equation*}\vskip-0.1cm
\noindent Using this notation, the TR-IP-SSQP subproblem \eqref{def:SQPsubproblem_1} can be equivalently written as \vskip-0.5cm
\begin{align}\label{def:SQPsubproblem2}
\min_{\tilde{\bd}_k\in\mR^{d+n}} \bar{\psi}_k^T\tilde{\bd}_k + \frac{1}{2}\tilde{\bd}_k^T \bar{W}_k\tilde{\bd}_k \;\;
\text{s.t. } \; \vartheta_k+A_k\tilde{\bd}_k=\b0, \; \|\tilde{\bd}_k\|\leq\Delta_k,\;
\tilde{\Delta}\bs_k \geq -\epsilon_s \boldsymbol{e},
\end{align}\vskip-0.15cm
\noindent where $ \bar{\psi}_k=(\barg_k ; -\theta_k \boldsymbol{e}),
\vartheta_k = (c_k ; h_k+\bs_k)$, $ \bar{W}_k=\begin{pmatrix}
\barH_k & 0 \\ 0 & \theta_k I
\end{pmatrix}$, and $A_k = \begin{pmatrix}
G_k & 0 \\
J_k & S_k
\end{pmatrix}$.
To solve the subproblem \eqref{def:SQPsubproblem2}, a potential difficulty is the infeasibility issue, i.e., $\{\tilde{\bd}_k:\vartheta_k+A_k\tilde{\bd}_k=\b0\} \cap \{ \tilde{\bd}_k: \|\tilde{\bd}_k\|\leq\Delta_k\} = \emptyset$. To address this issue, we decompose the~trial step into a \emph{normal step} and a \emph{tangential step}. Specifically, we write\vskip-0.5cm
\begin{equation*}
\tilde{\bd}_k \coloneqq  \tilde{\bw}_k + \tilde{\bt}_k = \begin{pmatrix}
\bw^x_k \\ \tilde{\bw}^s_k
\end{pmatrix} + \begin{pmatrix}
\bt^x_k \\ \tilde{\bt}^s_k
\end{pmatrix},
\end{equation*}\vskip-0.15cm
\noindent where $\tilde{\bw}_k\in\operatorname{im}(A_k^T)$ is the normal step and $\tilde{\bt}_k\in\ker(A_k)$ is the tangential step. We use $\bw^x_k$, $\tilde{\bw}^s_k$ and $\bt^x_k$, $\tilde{\bt}^s_k$ to denote the components of the normal and tangential steps~corresponding to $\Delta\bx_k$ and $\tilde{\Delta}\bs_k$, respectively.

To enforce the trust-region constraint, we decompose the trust-region radius $\Delta_k$~into two components that separately control the lengths of the normal and tangential steps. Specifically, we require \vskip-0.5cm
\begin{equation}\label{trust-region decom}
\| \tilde{\bw}_k \| \leq \breve{\Delta}_k \coloneqq \zeta\Delta_k\quad\quad\text{and}\quad\quad \| \tilde{\bt}_k \| \leq \hat{\Delta}_k \coloneqq \sqrt{\Delta_k^2 - \|\tilde{\bw}_k \|^2 },
\end{equation}\vskip-0.15cm
\noindent where $\zeta\in(0,1)$ is a user-specified parameter. Similarly, the fraction-to-boundary condition is decomposed as\vskip-0.5cm
\begin{equation}\label{fraction to boundary decom}
\tilde{\bw}^s_k \geq -\zeta\epsilon_s \boldsymbol{e} \quad\quad\text{and}\quad\quad \tilde{\bt}^s_k \geq -\epsilon_s \boldsymbol{e} - \tilde{\bw}_k^s.
\end{equation}\vskip-0.15cm
\noindent Here we use the same parameter $\zeta$ in the above two decompositions for simplicity.

We first compute the normal step $\tilde{\bw}_k$.  Suppose that $A_k$ has full row rank (as in~Assumption \ref{assump1}), we define\vskip-0.5cm
\begin{align}\label{eq:Sto_compute_v_k}
\bv_k := -A_k^T[A_kA_k^T]^{-1}\vartheta_k.
\end{align}\vskip-0.15cm
\noindent In the absence of trust-region and fraction-to-boundary constraints, one would set $\tilde{\bw}_k=\bv_k$ to satisfy $\vartheta_k + A_k\tilde{\bw}_k=\b0$. However, to enforce trust-region and fraction-to-boundary constraints, we scale $\bv_k$ by a factor $\bargamma_k\in(0,1]$: \vskip-0.5cm
\begin{equation*}
\begin{pmatrix}
\bw_k^x \\ \tilde{\bw}_k^s
\end{pmatrix} = \tilde{\bw}_k \coloneqq \bargamma_k\bv_k = \begin{pmatrix}
\bargamma_k \bv_k^x \\ \bargamma_k \tilde{\bv}_k^s
\end{pmatrix}.
\end{equation*}\vskip-0.15cm
\noindent The scaling parameter $\bargamma_k$ is chosen to ensure feasibility of both \eqref{trust-region decom} and \eqref{fraction to boundary decom}, yielding\vskip-0.5cm
\begin{equation}\label{eq:Sto_gamma_k}
\bargamma_k = \min\left\{
\frac{\zeta\epsilon_s}{\|\tilde{\bv}_k^s\|},
\frac{\zeta\Delta_k}{\|\bv_k\|},
1 \right\}.        
\end{equation}\vskip-0.2cm

Given the normal step, the tangential step $\tilde{\bt}_k$ is obtained by approximately solving \vskip-0.5cm
\begin{equation}\label{eq:Sto_tangential_step1}
\begin{aligned}
\min_{\tilde{\bt}_k\in\mR^{d+n}} \;\;& m(\tilde{\bt}_k)=\frac{1}{2}\tilde{\bt}_k^T\bar{W}_k\tilde{\bt}_k+(\bar{\psi}_k+\bargamma_k\bar{W}_k\bv_k)^T\tilde{\bt}_k\\
\text{s.t.}\;\;\; & A_k\tilde{\bt}_k=\b0, \; \|\tilde{\bt}_k\|\leq\hat{\Delta}_k, \; \tilde{\bt}_k^s \geq -\epsilon_s\boldsymbol{e} - \tilde{\bw}_k^s.  
\end{aligned}
\end{equation}\vskip-0.15cm
\noindent As in existing trust-region SSQP methods \citep{Fang2024Trust, Fang2025High}, an exact solution of \eqref{eq:Sto_tangential_step1} is not required. It suffices that the computed tangential step achieves a fixed fraction $\kappa_{fcd}\in(0,1]$ of the Cauchy reduction \citep{Nocedal2006Numerical}, which can be obtained using a projected conjugate gradient method \citep{Byrd1999Interior}.

For the convergence analysis, we derive an explicit upper bound on the Cauchy reduction. To this end, following \cite{ Byrd2000Trust}, we introduce a strengthened constraint. Since $\bargamma_k$ ensures $\|\tilde{\bw}_k^s\|\le\zeta\epsilon_s$, the condition
$\tilde{\bt}_k^s \ge -\epsilon_s\boldsymbol{e}-\tilde{\bw}_k^s$
is satisfied whenever
$\|\tilde{\bt}_k^s\|\le\epsilon_s-\|\tilde{\bw}_k^s\|$.
Using $\|\tilde{\bt}_k^s\|\le\|\tilde{\bt}_k\|$, we strengthen the constraints in \eqref{eq:Sto_tangential_step1} to \vskip-0.5cm
\begin{equation}\label{eq:modified_cons}
\|\tilde{\bt}_k\|\le\min\{\hat{\Delta}_k,\epsilon_s-\|\tilde{\bw}_k^s\|\}.
\end{equation}\vskip-0.15cm
\noindent 
Since $\tilde{\bt}_k\in\ker(A_k)$, it can be written as $\tilde{\bt}_k=Z_k\tilde{\bu}_k$, where $Z_k\in\mR^{(d+n)\times (d-m)}$ forms a basis of the null space of $A_k$ satisfying $Z_k^TZ_k=I, Z_kZ_k^T=P_k$, with $P_k = I-A_k^T[A_kA_k^T]^{-1}A_k$ denoting the projection matrix. Under this parametrization, the subproblem \eqref{eq:Sto_tangential_step1} with the constraint \eqref{eq:modified_cons} becomes \vskip-0.5cm
\begin{equation}\label{eq:Sto_tangential_step2}
\begin{aligned}
\min_{\tilde{\bu}_k\in\mR^{d-m}} \;\; & \tilde{m}(\tilde{\bu}_k) =\frac{1}{2}(Z_k\tilde{\bu}_k)^T\bar{W}_kZ_k\tilde{\bu}_k+(\barpsi_k+\bargamma_k\bar{W}_k\bv_k)^TZ_k\tilde{\bu}_k\\
\text{s.t.}\quad\;\; & \|\tilde{\bu}_k\|\leq \min\{\hat{\Delta}_k, \epsilon_s - \|\tilde{\bw}_k^s\|\}, 
\end{aligned}
\end{equation}\vskip-0.10cm
\noindent where we have used the relations $\|\tilde{\bt}_k\|^2=\tilde{\bu}_k^TZ_k^TZ_k\tilde{\bu}_k = \|\tilde{\bu}_k\|^2$. 
Let $\tilde{\bt}_k^C = Z_k\tilde{\bu}_k^C$ denote the Cauchy point (see \cite[(4.11)]{Nocedal2006Numerical}  for the formula), we require the computed tangential step to satisfy (see \cite[Lemma 4.3]{Nocedal2006Numerical} for the last inequality)\vskip-0.5cm
\begin{align}\label{eq:cauchy1}
m(\tilde{\bt}_k)&-m(\b0)  \leq \kappa_{fcd}(m(\tilde{\bt}_k^C)-m(\b0) )= \kappa_{fcd}(\tilde{m}(\tilde{\bu}_k^C)-m(\b0)) \notag \\ &
 \leq -\frac{\kappa_{fcd}}{2}\|Z_k^T(\barpsi_k+\bargamma_k\bar{W}_k\bv_k)\|\min\left\{\hat{\Delta}_k,\epsilon_s - \|\tilde{\bw}_k^s\|,\frac{\|Z_k^T(\barpsi_k+\bargamma_k\bar{W}_k\bv_k)\|}{\|Z_k^T\bar{W}_kZ_k\|}\right\}.
\end{align}\vskip-0.1cm

The strengthened constraint \eqref{eq:modified_cons} is more restrictive than the original constraints in \eqref{eq:Sto_tangential_step1}, and thus the bound in \eqref{eq:cauchy1} is conservative. Nevertheless, it is sufficient for establishing global convergence. Importantly, the strengthened constraint is introduced solely for theoretical purposes. In practice, the algorithm solves only the original subproblem \eqref{eq:Sto_tangential_step1}, and neither the explicit construction of the null-space basis $Z_k$ nor the constraint modification is required.

\section{TR-IP-SSQP Method}
\label{sec:3}

In this section, we describe our algorithm design, which is summarized in Algorithm \ref{Alg:STORM}. We first introduce the probabilistic oracles constructed in every iteration.

\subsection{Probabilistic oracles}

In each iteration, the zeroth- and first-order probabilistic oracles generate estimates of the objective value $\bar{f}(\bx,\xi)$ and the objective gradient $\bar{g}(\bx,\xi)$, respectively, where $\xi$ denotes a random variable defined on some probability space. These estimates are required to satisfy certain adaptive accuracy conditions with a high but fixed probability. 

Let $\kappa_g,\kappa_f>0$ and $p_g,p_f\in(0,1)$ be user-specified parameters. In the $k$-th iteration, $\Delta_k$ denotes the trust-region radius and $\barepsilon_k$ denotes the reliability parameter, both of which are updated from the $(k-1)$-th iteration.

The probabilistic first-order oracle is defined at $\bx_k$ as follows, which indicates that the noise of the gradient estimate is $\mO(\Delta_k)$ with probability at least $1-p_g$.

\begin{definition}[Probabilistic first-order oracle]\label{def:Probabilistic first-order oracle}
Given $\bx_k$, the oracle computes $\barg_k\coloneqq \barg(\bx_k,\xi_k^g)$, an estimate of the objective gradient $\nabla f_k$, such that\vskip-0.5cm
\begin{equation}\label{def:Ak}
\A_k=\{\|\barg_k-\nabla f_k\|\leq  \kappa_g\Delta_k\} \quad\quad \text{satisfies}\quad\quad P(\A_k \mid \bx_k )\geq 1 - p_g.
\end{equation}
\end{definition}\vskip-0.1cm

The zeroth-order probabilistic oracle is defined at both the current iterate $\bx_k$ and the trial iterate $\bx_{s_k}= \bx_k+\Delta\bx_k$ as follows.

\begin{definition}[Probabilistic zeroth-order oracle]\label{def:Probabilistic zeroth-order oracle}
Given $\bx_k$ and $\bx_{s_k}$, the oracle computes $\barf_k \coloneqq \barf(\bx_k,\xi_k^f)$ and $\barf_{s_k} \coloneqq \barf(\bx_{s_k},\xi_{s_k}^f)$, the estimates of the objective values $f_k \coloneqq f(\bx_k)$ and $f_{s_k}\coloneqq f(\bx_{s_k})$, such that the following two conditions hold.

\vskip5pt
\noindent$\bullet$\textbf{(i)} The absolute errors $e_k \coloneqq |\barf_k- f_k|$ and $e_{s_k} \coloneqq |\barf_{s_k}- f_{s_k}|$ are sufficiently small with a fixed probability:\vskip-0.5cm
\begin{equation}\label{def:Bk}
 \B_k=\left\{\max\left(e_k,e_{s_k}\right)\leq\kappa_f\Delta_k^{2}\right\}\quad \text{ satisfies } \quad P(\B_k\mid\bx_k,\Delta\bx_k)\geq 1- p_f.
\end{equation}\vskip-0.1cm
\noindent$\bullet$\textbf{(ii)}
The estimates $\barf_k,\barf_{s_k}$ are sufficiently \textit{reliable} so that they have controlled variance: \vskip-0.5cm
\begin{equation}\label{def:reliable_est}
\max\left\{\mE\left[e_k^2\mid\bx_k\right],\mE\left[e_{s_k}^2\mid\bx_k,\Delta\bx_k\right]\right\}\leq\barepsilon_k^2.
\end{equation}\vskip-0.1cm
\end{definition}

\vskip-0.1cm
We adopt the exact oracle design proposed in \cite{Fang2024Trust} with $\alpha=0$ (in their notation), which was developed for trust-region SSQP methods for solving equality-constrained problems. This unified oracle design allows equality- and inequality-constrained problems to be treated consistently, without requiring any modifications in sampling mechanisms to accommodate inequality constraints.

Estimates satisfying the above probabilistic oracle conditions can be constructed via a variety of techniques, including sample averaging, median-of-means estimators, or finite-difference approximations in gradient-free settings. We refer the interested reader to Section~3.2 of \cite{Fang2025High} and the references therein for a detailed discussion of estimator construction and the associated sample size requirements.

\begin{remark}
Our oracle design permits biased estimates and imposes no parametric assumptions on the noise; in particular, the gradient noise may possess infinite variance. These conditions are substantially weaker than those adopted in fixed-sampling frameworks \citep{Berahas2021Sequential, Fang2024Fully, curtis2026single, Curtis2023Stochastic}, where gradient estimates are required to be unbiased and their variance either uniformly bounded or subject to a growth condition. Recent non-asymptotic analyses \citep{Jin2023Sample, Cao2024First, Berahas2025Sequential} also employ probabilistic oracle designs but assume that the noise in objective-value estimation is sub-exponential at each iteration, implying the existence of infinite orders of moments and therefore restricting attention to light-tailed noise distributions. Recent literature \citet{Fang2025High, Scheinberg2025Stochastic} has aimed to relax the sub-exponential~noise to heavy-tailed noise for equality-constrained or unconstrained problems.
\end{remark}

\subsection{Algorithm design}\label{sec:Algorithm design}

Our method requires input parameters: $p_g, p_f, \eta, \zeta, \epsilon_s\in(0,1)$, $\kappa_{fcd}\in(0,1]$, $\Delta_{\max}$, $\rho,\gamma>1$, $\kappa_g>0, \kappa_f\in (0,\frac{\kappa_{fcd}\epsilon_s\eta^3}{16\max\{1,\Delta_{\max}\}}]$, along with a barrier parameter sequence $\{\theta_k\}\downarrow 0$. The method is initialized at $\bx_0,\bs_0$, $\Delta_0\in (0,\Delta_{\max}]$, $\barepsilon_0$, $\barmu_0>0$.~We allow $\bx_0,\bs_0$ to be infeasible ($c(\bx_0)\neq \b0, h(\bx_0)+\bs_0\neq \b0$) but require~$\bs_0>0$. 

\begin{algorithm}[t]
\caption{A Trust-Region Interior-Point SSQP (TR-IP-SSQP) Method}\label{Alg:STORM}
\begin{algorithmic}[1]
\STATE \textbf{Input:} Initialization $(\bx_0,\bs_0)$, $\Delta_0\in(0,\Delta_{\max}]$, $\barepsilon_0, \barmu_0>0$; parameters $p_g$, $p_f$, $\eta$, $\zeta$, $\epsilon_s\in(0,1)$, $\kappa_{fcd}\in(0,1]$, $\Delta_{\max}$, $\rho,\gamma>1$, $\kappa_g>0, \kappa_f\in (0,\frac{\kappa_{fcd}\epsilon_s\eta^3}{16\max\{1,\Delta_{\max}\}}]$, $\{\theta_k\}$.
\FOR {$k=0,1,\cdots,$}
\STATE Obtain $\barg_k$ and compute $\bar{Q}_k$, then generate $\barH_k$ and construct $\bar{W}_k$.  
\IF {\eqref{def:acc1} does not hold}
\STATE Set $\bx_{k+1}=\bx_k,\bs_{k+1}=\bs_k,\Delta_{k+1}=\Delta_k/\gamma,\barepsilon_{k+1}=\barepsilon_k/\gamma$. \hfill $\triangleright$~\textbf{{\footnotesize Unsuccessful~Step}}
\ELSE
\STATE Compute $\tilde{\bd}_k$ as in Section \ref{sec: Computing the (rescaled) trial step} and compute $\text{Pred}_k$ following \eqref{def:Pred_k}.
%\EndIf
%\State 
\WHILE{\eqref{eq:threshold_Predk} does not hold}
\STATE Update $\barmu_k=\rho\barmu_k$.
\ENDWHILE
\STATE Set $\bx_{s_k} \hskip-1pt = \hskip-1pt \bx_k\hskip-1pt+\hskip-1pt\Delta\bx_k$, $\bs_{s_k}\hskip-1pt=\hskip-1pt\bs_k\hskip-1pt+\hskip-1pt\Delta\bs_k$; obtain $\barf_k, \barf_{s_k}$; compute $\text{Ared}_k$ by \eqref{def:Ared_k}.
\IF {$\text{Ared}_k/\text{Pred}_k\geq\eta$} 
\STATE Set $\bx_{k+1}\hskip-1pt=\hskip-1pt\bx_{s_k},\bs_{k+1} \hskip-1pt=\hskip-1pt \bs_{s_k}$, $\Delta_{k+1}\hskip-1pt = \hskip-1pt\min\{\gamma\Delta_k,\Delta_{\max}\}$. \hfill $\triangleright$~\textbf{{\footnotesize Successful~Step}}
\IF{$-\text{Pred}_k\geq\barepsilon_k$}
\STATE Set $\barepsilon_{k+1}=\gamma\barepsilon_{k}$. \hfill $\triangleright$~\textbf{{\footnotesize Reliable~Step}}
\ELSE 
\STATE Set $\barepsilon_{k+1}=\barepsilon_{k}/\gamma$. \hfill $\triangleright$~\textbf{{\footnotesize Unreliable~Step}}
\ENDIF
\ELSE 
\STATE Set $\bx_{k+1}=\bx_{k},\bs_{k+1}=\bs_{k},\Delta_{k+1}=\Delta_k/\gamma,\barepsilon_{k+1}=\barepsilon_k/\gamma$. \hfill $\triangleright$~\textbf{{\footnotesize Unsuccessful~Step}}
\ENDIF
\STATE Set $\barmu_{k+1}=\barmu_k$.
\ENDIF
\ENDFOR
\end{algorithmic}
\end{algorithm}

Given $(\bx_k,\bs_k,\theta_k,\Delta_k,\barepsilon_k,\barmu_k)$ at the $k$-th iteration, we first obtain a stochastic gradient estimate $\barg_k$ from the probabilistic first-order oracle (cf. Definition \ref{def:Probabilistic first-order oracle}). We then compute the estimated stationarity measure\vskip-0.5cm
\begin{equation*}
\bar{Q}_k = (P_k\barpsi_k ; \vartheta_k),
\end{equation*}\vskip-0.1cm
\noindent where, as defined in Section~\ref{sec: Computing the (rescaled) trial step},\vskip-0.5cm
\begin{align*}
\barpsi_k =(\barg_k;-\theta_k \boldsymbol{e}), \;\;
\vartheta_k = (c_k;h_k+\bs_k),\;\;
P_k = I - A_k^T[A_kA_k^T]^{-1}A_k, \;\; A_k = \begin{pmatrix}
G_k & 0 \\
J_k & S_k
\end{pmatrix}.
\end{align*}\vskip-0.25cm
\noindent We also construct $\barH_k$ to estimate $\nabla^2_{\bx}\L_k$ and set $\bar{W}_k=\text{diag}(\barH_k,\theta_kI)$.

Next, we check the condition\vskip-0.5cm
\begin{equation}\label{def:acc1}
\frac{\|\bar{Q}_k\|}{\max\{1,\|\bar{W}_k\|\}}\geq \eta\Delta_k.
\end{equation}\vskip-0.1cm
\noindent If \eqref{def:acc1} fails, we claim that the $k$-th iteration is \textit{unsuccessful} and set $\bx_{k+1}=\bx_k, \bs_{k+1}=\bs_k,
\Delta_{k+1}=\Delta_k/\gamma, \barepsilon_{k+1}=\barepsilon_k/\gamma$ to proceed to the $(k+1)$-th iteration.~Otherwise, we~compute the rescaled trial step $\tilde{\bd}_k=(\Delta\bx_k,S_k^{-1}\Delta\bs_k)$ as described in Section~\ref{sec: Computing the (rescaled) trial step}.

With the rescaled trial step $\tilde{\bd}_k$, we further compute the predicted reduction \vskip-0.5cm
\begin{equation}\label{def:Pred_k}
\text{Pred}_k=\barpsi_k^T\tilde{\bd}_k + \frac{1}{2}\tilde{\bd}_k^T\bar{W}_k\tilde{\bd}_k+\barmu_k(\|\vartheta_k+A_k\tilde{\bd}_k\|-\|\vartheta_k\|)
\end{equation}\vskip-0.25cm
\noindent and update the merit parameter as $\barmu_k\leftarrow\rho\barmu_k$ until\vskip-0.5cm
\begin{equation}
\label{eq:threshold_Predk}
\text{Pred}_k\leq -\frac{\kappa_{fcd}}{2} \|\bar{Q}_k\| \min\left\{\Delta_k,\epsilon_s,\frac{\|\bar{Q}_k\|}{\|\bar{W}_k\|}\right\}.
\end{equation}\vskip-0.25cm
\noindent Then, we set $(\bx_{s_k},\bs_{s_k})\coloneqq (\bx_k+\Delta\bx_k,\bs_k+\Delta\bs_k)$, and obtain $\barf_k$ and $\barf_{s_k}$ from the probabilistic zeroth-order oracle (cf. Definition \ref{def:Probabilistic zeroth-order oracle}).

We adopt an $\ell_2$ merit function to balance the reduction in the objective function and the constraint violation, where the log-barrier term is also included:\vskip-0.5cm
\begin{equation}\label{merit_func}
\L_{\barmu, \theta}(\bx,\bs) = f(\bx) - \theta \sum_{i=1}^n\ln(\bs^{(i)}) + \barmu \|(c(\bx); h(\bx)+\bs)\|.
\end{equation}\vskip-0.25cm
\noindent At the $k$-th iteration, we compute its estimate at $(\bx_k,\bs_k)$ and $(\bx_{s_k},\bs_{s_k})$, denoted~as~$\bar{\L}_{\barmu_k,\theta_k}^k$ and $\bar{\L}_{\barmu_k,\theta_k}^{s_k}$, and define the actual reduction as\vskip-0.5cm
\begin{multline}
\label{def:Ared_k}
\text{Ared}_k =\bar{\L}_{\barmu_k,\theta_k}^{s_k}-\bar{\L}_{\barmu_k,\theta_k}^k 
=\barf_{s_k}-\barf_k-\theta_k\sum_{i=1}^n \left\{\ln(\bs_{s_k}^{(i)})-\ln(\bs_k^{(i)})\right\} \\
+\barmu_k(\|(c_{s_k} ; h_{s_k}+\bs_{s_k})\|-\|(c_k ; h_k+\bs_k)\|).
\end{multline}\vskip-0.25cm
\noindent To determine whether the iterate is updated, we check\vskip-0.5cm
\begin{equation}\label{def:accuracy}
\text{(a)}\; \text{Ared}_k/\text{Pred}_k\geq\eta \qquad\quad\text{and}\quad\qquad \text{(b)}\; -\text{Pred}_k\geq\barepsilon_k.
\end{equation}\vskip-0.1cm

\noindent $\bullet$ \textbf{Case 1: (\ref{def:accuracy}a) holds}. We say that the $k$-th iteration is \textit{successful} and update the iterate and the trust-region radius as\vskip-0.5cm
\begin{equation}\label{update_succ}
\bx_{k+1}=\bx_{s_k},\quad\quad \bs_{k+1}=\bs_{s_k} ,\quad\quad \Delta_{k+1}=\min\{\gamma\Delta_k,\Delta_{\max}\}.
\end{equation}\vskip-0.1cm
\noindent If (\ref{def:accuracy}b) also holds, 
we say that the $k$-th iteration is \textit{reliable} and increase the reliability parameter by $\barepsilon_{k+1}=\gamma\barepsilon_k$. If (\ref{def:accuracy}b) does not hold, we say that the $k$-th iteration is \textit{unreliable} and decrease the reliability parameter by $\barepsilon_{k+1}=\barepsilon_k/\gamma$.

\noindent $\bullet$ \textbf{Case 2: (\ref{def:accuracy}a) does not hold}. We say that the $k$-th iteration is \textit{unsuccessful}. We let $\bx_{k+1}=\bx_k,\bs_{k+1}=\bs_k, \Delta_{k+1}=\Delta_k/\gamma,\barepsilon_{k+1}=\barepsilon_k/\gamma$.

The criterion (\ref{def:accuracy}a) ensures that the actual reduction achieves a fixed fraction of the predicted reduction, while the criterion~(\ref{def:accuracy}b) controls the level of stochasticity in the estimates. Based on the values of $\text{Pred}_k$ and $\bar{\epsilon}_k$, we classify a successful iteration as either \emph{reliable} or \emph{unreliable}, and update the reliability parameter $\bar{\epsilon}_k$ accordingly. This mechanism provides additional flexibility in the selection of sample sizes and may help reduce the overall sampling cost.

\begin{remark}
Since the trust-region subproblem is formulated and solved in terms of the rescaled trial step $\tilde{\bd}_k=(\Delta\bx_k,S_k^{-1}\Delta\bs_k)$, we compute the predicted reduction directly using $\tilde{\bd}_k$ (cf. \eqref{def:Pred_k}).
In the convergence analysis, however, it is more convenient to work with the unscaled trial step $(\Delta\bx_k,\Delta\bs_k)$. Accordingly, we rewrite the predicted reduction as follows, yielding an expression equivalent to \eqref{def:Pred_k},\vskip-0.5cm
\begin{align}\label{def:Pred_k2}
 \text{Pred}_k  &=    \barg_k^T\Delta\bx_k + \frac{1}{2}\Delta\bx_k^T\barH_k\Delta\bx_k -\theta_k\boldsymbol{e}^TS_k^{-1}\Delta\bs_k + \frac{1}{2} \theta_k \Delta\bs_k^TS_k^{-2}\Delta\bs_k \notag\\
 & \quad +\barmu_k\left\{\|(c_k+G_k\Delta\bx_k; h_k +\bs_k + J_k\Delta\bx_k+\Delta\bs_k)\| - \|(c_k; h_k+\bs_k)\| \right\}.
\end{align}
\end{remark}

\begin{remark}
Similar to interior-point methods in deterministic optimization \citep{Byrd1999Interior,Byrd2000Trust,Curtis2010Interior,Curtis2025Interior}, our algorithm design involves an estimated stationarity measure $\bar{Q}_k$.
Our convergence analysis focuses on both $\bar{Q}_k$ and its deterministic counterpart, \vskip-0.5cm 
\begin{equation}\label{def:true_KKT_measure}
Q_k \coloneqq (P_k \psi_k; \vartheta_k)  \quad\text{where }\;\;\psi_k = (\nabla f_k; -\theta_k \boldsymbol{e}).
\end{equation}\vskip-0.1cm
\noindent In fact, $\|Q_k\|$ is closely related to the KKT residual $\|\nabla \L_{\theta_k}\|$ of the barrier problem \eqref{BarrierProblem}. With the Lagrangian multipliers defined as in \eqref{def:true_Lag_para}, there exists a constant $\Upsilon_L>0$ such that 
$\|\nabla\L_{\theta_k}\| \leq \Upsilon_L \| Q_k\|$ for all $k\geq 0$.  Therefore, to establish global convergence, it suffices to show that $\|Q_k\|\rightarrow 0$ almost surely.

\end{remark}

We conclude this section by formalizing the randomness of the algorithm. We define $\F_0\subset\F_1\subset\F_2\cdots$ as a filtration of $\sigma$-algebras. The $\sigma$-algebra $\F_{k-1}$ is generated by $\{\xi^g_j, \xi^f_j\}_{j=0}^{k-1}$ and thus contains the randomness before the $k$-th iteration. We also define $\F_{k-1/2}$ as the $\sigma$-algebra generated by $\{\xi^g_j , \xi^f_j\}_{j=0}^{k-1}\cup\{\xi^g_k \} $, which contains the randomness of $\barg_k$   in addition to the randomness in $\F_{k-1}$. From these definitions, we have $\F_{k-1}\subset \F_{k-1/2}\subset\F_{k}$. Letting $\F_{-1}=\sigma(\bx_0,\bs_0)$, we have for all $k\geq 0$,\vskip-0.5cm
\begin{equation*}
\sigma(\bx_k,\bs_k,\Delta_k,\barepsilon_k)\subset \F_{k-1}\quad\quad\text{and}\quad\quad
\sigma(\Delta\bx_k,\Delta\bs_k,\barmu_k)\subset\F_{k-1/2}.
\end{equation*}

\section{Convergence Analysis} \label{sec:4}

We study the convergence of the proposed algorithm in this section. We first state the assumptions.

\begin{assumption}\label{assump1}
The iterates and trial points, $\{\bx_k,\bs_k\}$ and $\{\bx_{s_k}, \bs_{s_k}\}$, are contained within an open convex set $\Omega$. The objective function $f(\bx)$ is continuously differentiable and bounded below by $f_{\inf}$, and its gradient $\nabla f(\bx)$ is Lipschitz continuous on $\Omega$ with constant $L_{\nabla f}$. The constraints $c(\bx),h(\bx)$ are continuously differentiable, and their respective Jacobians $G(\bx),J(\bx)$ are Lipschitz continuous on $\Omega$ with constants $L_G,L_J$, respectively. 
There exist constants $\kappa_c,\kappa_h$, $\kappa_{\nabla f}$ such that $\|c_k\| \leq \kappa_{c}$, $\|h_k\| \leq \kappa_{h}$, and $\|\nabla f_k\| \leq \kappa_{\nabla f}$ for all $k \geq 0$. There exist $\kappa_{1,A},\kappa_{2,A}>0$ such that $\kappa_{1,A}\cdot I \preceq A_kA_k^T\preceq \kappa_{2,A}\cdot I$. Finally, there exists a constant $\kappa_B \geq 1$ such that $\|\barH_k\| \leq \kappa_B$.
\end{assumption}

Assumption \ref{assump1} is standard in both deterministic and stochastic SQP literature \citep{Byrd1987Trust, Powell1990trust, ElAlem1991Global, Conn2000Trust, Byrd1999Interior,Byrd2000Trust, Berahas2021Stochastic, Curtis2023Stochastic, Fang2024Fully,Fang2024Trust}. In particular, it implies that $G_k$ has full row rank,\vskip-0.5cm
\begin{align} \label{eq:kappa_G_J}
\max\{\|G_k\|,\|J_k\|,\|S_k\|\}\leq\sqrt{\kappa_{2,A}}, \;\;\; \max\{\|G_k^T[G_kG_k^T]^{-1}\|, \|S_k^{-1}\| \}\leq \frac{1}{\sqrt{\kappa_{1,A}}},
\end{align}\vskip-0.2cm
\noindent and $\|\bs_k\|\leq \sqrt{n\cdot \kappa_{2,A}}\eqqcolon \Upsilon_s$.

To establish global convergence, we also need the merit parameter to be stabilized, thus we impose the assumption below.

\begin{assumption}\label{assump4}
There exist a (potentially stochastic) $\barK<\infty$ and a deterministic constant $\hat{\mu}$, such that for all $k\geq\barK$, $\barmu_k=\barmu_{\barK}\leq \hatmu$.
\end{assumption}

Assuming stabilization of the merit parameter is standard in the literature for both line-search-based methods \citep{Berahas2021Sequential, Berahas2021Stochastic, Berahas2022Accelerating, Curtis2023Stochastic} and trust-region-based methods \citep{Fang2024Fully, Fang2024Trust}. However, unlike line-search-based methods, our approach does not require the merit parameter to be sufficiently large, which typically necessitates either stronger assumptions on the noise distribution (e.g., the symmetry assumption in \cite{ Berahas2021Stochastic,Berahas2021Sequential}) or more complicated algorithm design (e.g., addition condition (19) in \cite{Na2022adaptive}). We revisit Assumption \ref{assump4} in Section \ref{subsec:merit_para}, where we show that it holds when the gradient~estimates are bounded.

\subsection{Fundamental lemmas}
In the following lemma, we demonstrate that when the estimates of gradients are accurate and the trust-region radius is 
sufficiently small compared to $Q_k$, then Line 4 of the algorithm will not be triggered.
\begin{lemma}\label{lemma:line6_not_hold}
Under Assumption \ref{assump1} and the event $\A_k$, if \vskip-0.5cm
\begin{equation}\label{eq:lemma3:delta}
\|Q_k\|\geq \{\kappa_g + \eta(\kappa_B+\theta_0)\} \cdot \Delta_k   
\end{equation}\vskip-0.1cm
\noindent is satisfied, then Line 4 of Algorithm \ref{Alg:STORM} will not be triggered.
\end{lemma}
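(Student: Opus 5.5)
To show Line 4 is not triggered, we must verify \eqref{def:acc1}, i.e. $\|\bar{Q}_k\|\geq \eta\Delta_k\max\{1,\|\bar{W}_k\|\}$. The plan is to pass from the true measure $Q_k$ to its estimate $\bar{Q}_k$ via the triangle inequality, using the accuracy event $\A_k$, and to bound $\max\{1,\|\bar{W}_k\|\}$ uniformly by $\kappa_B+\theta_0$.

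\textbf{Step 1: compare $\bar{Q}_k$ with $Q_k$.} The second blocks of $\bar{Q}_k=(P_k\barpsi_k;\vartheta_k)$ and $Q_k=(P_k\psi_k;\vartheta_k)$ coincide. Hence $\bar{Q}_k-Q_k=(P_k(\barpsi_k-\psi_k);\b0)$. Moreover $\barpsi_k-\psi_k=(\barg_k-\nabla f_k;\b0)$, since the $-\theta_k\boldsymbol{e}$ components cancel. Because $P_k$ is an orthogonal projection, $\|P_k\|\le 1$, and therefore
\begin{equation*}
\|\bar{Q}_k-Q_k\|\le \|\barg_k-\nabla f_k\|\le \kappa_g\Delta_k \quad\text{on } \A_k .
\end{equation*}
It follows that $\|\bar{Q}_k\|\ge \|Q_k\|-\kappa_g\Delta_k$.

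\textbf{Step 2: bound $\|\bar{W}_k\|$.} Since $\bar{W}_k=\mathrm{diag}(\barH_k,\theta_kI)$ is block diagonal, $\|\bar{W}_k\|=\max\{\|\barH_k\|,\theta_k\}\le \max\{\kappa_B,\theta_0\}$. This uses Assumption~\ref{assump1} and the fact that $\{\theta_k\}$ is nonincreasing, so $\theta_k\le\theta_0$. As $\kappa_B\ge 1$ and $\theta_0>0$, we get $\max\{1,\|\bar{W}_k\|\}\le \kappa_B+\theta_0$.

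\textbf{Step 3: combine.} Substituting \eqref{eq:lemma3:delta} into Step 1 gives
\begin{equation*}
\|\bar{Q}_k\|\ge \{\kappa_g+\eta(\kappa_B+\theta_0)\}\Delta_k-\kappa_g\Delta_k=\eta(\kappa_B+\theta_0)\Delta_k .
\end{equation*}
By Step 2, the right-hand side is at least $\eta\Delta_k\max\{1,\|\bar{W}_k\|\}$. Dividing by $\max\{1,\|\bar{W}_k\|\}$ yields \eqref{def:acc1}, so Line 4 is not triggered.

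The only points requiring care are the cancellation of the $\theta_k$ components in Step 1 and the monotonicity $\theta_k\le\theta_0$ in Step 2. Neither should be a real obstacle.
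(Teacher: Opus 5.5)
Your proof is correct and follows the paper's argument: you bound $\|\bar{Q}_k - Q_k\|$ by $\|\nabla f_k - \bar{g}_k\| \le \kappa_g\Delta_k$ using that $P_k$ is nonexpansive, bound $\max\{1,\|\bar{W}_k\|\}$ by $\kappa_B+\theta_0$, and combine the two. Your explicit use of $\kappa_B \ge 1$ to handle the $\max\{1,\cdot\}$ term is a detail the paper leaves implicit.
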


\begin{proof}
By the relationship $ \|Q_k\|-\|\bar{Q}_k\| \leq  \|Q_k-\bar{Q}_k\| = \|P_k(\psi_k-\bar{\psi}_k)\| \leq \|\psi_k-\bar{\psi}_k\| = \|\nabla f_k-\barg_k\|$, we have $\|Q_k\|-\|\bar{Q}_k\| \leq \|\nabla f_k-\barg_k\|\leq \kappa_g \Delta_k$ on the event $\A_k$.~In addition, by Assumption~\ref{assump1}, we have $\|\barH_k\| \leq\kappa_B$, which combined with $\theta_k\leq \theta_0$ leads to $\|\bar{W}_k\| = \|\text{diag}(\barH_k,\theta_kI)\|\leq \kappa_B+\theta_0$. Combining these results, we find that \eqref{eq:lemma3:delta}~implies $\|\bar{Q}_k\|/\max\{1,\|\bar{W}_k\|\}\geq \eta\Delta_k$, thus Line 4 will not be triggered.
\end{proof}

In the next lemma, we investigate the difference between the predicted reduction $\text{Pred}_k$ in \eqref{def:Pred_k} and the true actual reduction  $\L_{\barmu_{\barK},\theta_k}^{s_k}-\L_{\barmu_{\barK},\theta_k}^{k}$ computed using the $\ell_2$~merit function \eqref{merit_func}. We show that their difference can be upper bounded by $\cO(\Delta_k^2)$.

\begin{lemma}\label{lemma:diff_ared_pred_wo_corr_step}
Under Assumptions \ref{assump1}, \ref{assump4}, and the event $\A_k$, for all $k\geq\barK$,
\begin{align}\label{eq:ared-pred}
\big|\L_{\barmu_{\barK},\theta_k}^{s_k}-\L_{\barmu_{\barK},\theta_k}^{k}-\text{Pred}_k\big|
\leq \Upsilon_1\Delta_k^2,
\end{align}
where $\Upsilon_1=\kappa_g +\max\{\frac{1}{2}(L_{\nabla f}+\kappa_B+\hatmu (L_G+L_J)),\theta_0(\frac{1}{1-\epsilon_s}+\frac{1}{2}) \}$.
\end{lemma}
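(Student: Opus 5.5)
Since $k\ge\barK$, Assumption \ref{assump4} gives $\barmu_k=\barmu_{\barK}$, so both the true merit difference and $\text{Pred}_k$ (in the unscaled form \eqref{def:Pred_k2}) use the same penalty parameter. The plan is to split the difference $\L_{\barmu_{\barK},\theta_k}^{s_k}-\L_{\barmu_{\barK},\theta_k}^{k}-\text{Pred}_k$ into three groups and bound each by a multiple of $\Delta_k^2$. We use $\|\Delta\bx_k\|\le\|\tilde{\bd}_k\|\le\Delta_k$ and $\|S_k^{-1}\Delta\bs_k\|\le\Delta_k$, which hold because the normal and tangential steps satisfy \eqref{trust-region decom}. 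The three groups are the objective part, the barrier part, and the constraint-violation part.

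\emph{Objective part.} This part is $f_{s_k}-f_k-\barg_k^T\Delta\bx_k-\frac12\Delta\bx_k^T\barH_k\Delta\bx_k$. Write it as $[f_{s_k}-f_k-\nabla f_k^T\Delta\bx_k]+(\nabla f_k-\barg_k)^T\Delta\bx_k-\frac12\Delta\bx_k^T\barH_k\Delta\bx_k$. The first bracket is at most $\frac{L_{\nabla f}}{2}\|\Delta\bx_k\|^2$ by the Lipschitz gradient on the convex set $\Omega$ (Assumption \ref{assump1}). On $\A_k$ the second term is at most $\kappa_g\Delta_k^2$. The third is at most $\frac{\kappa_B}{2}\Delta_k^2$.

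\emph{Constraint part.} This part is $\barmu_{\barK}$ times $\|(c_{s_k};h_{s_k}+\bs_{s_k})\|-\|(c_k+G_k\Delta\bx_k;h_k+\bs_k+J_k\Delta\bx_k+\Delta\bs_k)\|$. By the reverse triangle inequality it is at most $\barmu_{\barK}\bigl(\|c_{s_k}-c_k-G_k\Delta\bx_k\|+\|h_{s_k}-h_k-J_k\Delta\bx_k\|\bigr)$. The slack terms cancel exactly, since $\bs_{s_k}=\bs_k+\Delta\bs_k$. The Lipschitz Jacobians then give the bound $\frac{\hatmu}{2}(L_G+L_J)\Delta_k^2$. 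Together with the objective part, this produces $\kappa_g+\frac12(L_{\nabla f}+\kappa_B+\hatmu(L_G+L_J))$.

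\emph{Barrier part.} This part is $-\theta_k\sum_i[\ln(1+t_i)-t_i]-\frac{\theta_k}{2}\sum_i t_i^2$, where $t_i=(S_k^{-1}\Delta\bs_k)^{(i)}$. The fraction-to-boundary condition gives $t_i\ge-\epsilon_s$, and we also have $\sum_i t_i^2\le\Delta_k^2$. The main technical step is the scalar estimate $|\ln(1+t)-t|\le\frac{t^2}{2(1-\epsilon_s)}\le\frac{t^2}{1-\epsilon_s}$ for $t\ge-\epsilon_s$. We obtain it from Taylor's theorem with Lagrange remainder: the remainder equals $-\frac{t^2}{2(1+\xi)^2}$ with $1+\xi\ge1-\epsilon_s$ for $t<0$, and $1+\xi\ge1$ for $t\ge0$. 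The barrier part is then bounded by $\theta_0\bigl(\frac{1}{1-\epsilon_s}+\frac12\bigr)\Delta_k^2$, using $\theta_k\le\theta_0$.

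Finally, we combine the three bounds. If we simply add them, we get $\kappa_g$ plus the \emph{sum} of the two bracketed quantities, rather than their maximum. To reach the stated constant with a max, we would exploit the joint constraint $\|\Delta\bx_k\|^2+\|S_k^{-1}\Delta\bs_k\|^2\le\Delta_k^2$. The $x$-dependent parts are bounded by a constant times $\|\Delta\bx_k\|^2$, apart from the $\kappa_g$ term, which we bound by $\kappa_g\Delta_k^2$ separately. The $s$-dependent part is bounded by a constant times $\|S_k^{-1}\Delta\bs_k\|^2$. Hence $a\|\Delta\bx_k\|^2+b\|S_k^{-1}\Delta\bs_k\|^2\le\max\{a,b\}\Delta_k^2$, which yields $\Upsilon_1$. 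The only delicate point is keeping the $\kappa_g$ term as $\kappa_g\|\Delta\bx_k\|\Delta_k\le\kappa_g\Delta_k^2$, outside the max.
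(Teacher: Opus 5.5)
Your three-part split and your final combination step are exactly the paper's argument, and the objective and constraint parts are fine. The combination step keeps $\kappa_g\|\Delta\bx_k\|\Delta_k\le\kappa_g\Delta_k^2$ separate and uses $\|\Delta\bx_k\|^2+\|S_k^{-1}\Delta\bs_k\|^2\le\Delta_k^2$ to turn $a\|\Delta\bx_k\|^2+b\|S_k^{-1}\Delta\bs_k\|^2$ into $\max\{a,b\}\Delta_k^2$.

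The problem is the step you yourself call the main technical one. The Lagrange remainder gives $|\ln(1+t)-t|=\frac{t^2}{2(1+\xi)^2}$ with $\xi$ between $0$ and $t$. The only information $1+\xi\ge 1-\epsilon_s$ then yields $\frac{t^2}{2(1-\epsilon_s)^2}$, not $\frac{t^2}{2(1-\epsilon_s)}$, so you have dropped the square. Since $\frac{1}{2(1-\epsilon_s)^2}\le\frac{1}{1-\epsilon_s}$ holds only when $\epsilon_s\le\frac12$, your argument as written does not reach the stated constant when $\epsilon_s>\frac12$. It only proves the lemma with $\theta_0\bigl(\frac{1}{2(1-\epsilon_s)^2}+\frac12\bigr)$ in place of $\theta_0\bigl(\frac{1}{1-\epsilon_s}+\frac12\bigr)$. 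That is precisely the regime the paper recommends ($\epsilon_s=0.9$ or $0.99$); at $\epsilon_s=0.9$ the coefficient is $50$ instead of $10$. Because the statement fixes $\Upsilon_1$ explicitly, this is a real flaw, not a cosmetic one.

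The repair is easy. The paper uses the first-order mean value theorem instead (see \eqref{eq:sigma}). Write $\ln(1+t)-t=t\bigl(\frac{1}{1+\xi}-1\bigr)$, so $|\ln(1+t)-t|=\frac{|t|\,|\xi|}{1+\xi}\le\frac{t^2}{1+\min\{t,0\}}\le\frac{t^2}{1-\epsilon_s}$. This loses only one power of $1+\xi$.

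Alternatively, your sharper claim is in fact true, but it needs a different proof. For $t=-u\in[-\epsilon_s,0)$,
\[
-\ln(1-u)-u=\sum_{j\ge2}\frac{u^j}{j}\le\frac{u^2}{2}\sum_{j\ge0}u^j=\frac{u^2}{2(1-u)}\le\frac{t^2}{2(1-\epsilon_s)}.
\]
For $t\ge0$ one has $0\le t-\ln(1+t)\le t^2/2$. With either replacement your proof goes through and gives the stated $\Upsilon_1$.
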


\begin{proof}
In this proof, we use the formula of $\text{Pred}_k$ defined in \eqref{def:Pred_k2}, which is based on unscaled trial steps. Combining \eqref{merit_func} and \eqref{def:Pred_k2}, we have\vskip-0.5cm
\begin{align*} %\label{Lemma4_total_term}
\L_{\barmu_{\barK,\theta_k}}^{s_k} & -\L_{\barmu_{\barK,\theta_k}}^{k}-\text{Pred}_k 
= f_{s_k}-f_k-\barg_k^T\Delta\bx_k-\frac{1}{2}\Delta\bx_k^T\barH_k\Delta\bx_k \\
& \quad -\theta_k\left\{\sum_{i=1}^{n}\ln(\bs_{s_k}^{(i)})-\sum_{i=1}^{n}\ln(\bs_k^{(i)})-\boldsymbol{e}^TS_k^{-1}\Delta\bs_k + \frac{1}{2}\Delta\bs_k^TS_k^{-2}\Delta\bs_k\right\} \\
& \quad  -\barmu_{\barK}\left\{\|(c_k+G_k\Delta\bx_k ; h_k +\bs_k + J_k\Delta\bx_k+\Delta\bs_k)\| - \|(c_{s_k} ; h_{s_k}+\bs_{s_k})\| \right\}.
\end{align*} \vskip-0.1cm
\noindent  Noticing that $\boldsymbol{e}^TS_k^{-1}\Delta\bs_k = \sum_{i=1}^n \frac{\Delta\bs_k^{(i)}}{\bs_k^{(i)}} $ and $\Delta\bs_k^TS_k^{-2}\Delta\bs_k = \sum_{i=1}^n ( \frac{\Delta\bs_k^{(i)}}{\bs_k^{(i)}})^2$, we have
\begin{align*}
& \left| \L_{\barmu_{\barK,\theta_k}}^{s_k}-\L_{\barmu_{\barK,\theta_k}}^{k}-\text{Pred}_k \right| \leq \left| f_{s_k}-f_k-\barg_k^T\Delta\bx_k-\frac{1}{2}\Delta\bx_k^T\barH_k\Delta\bx_k \right| \\
& \qquad\qquad\qquad +\theta_k\sum_{i=1}^n \left| \ln(\bs_{s_k}^{(i)})-\ln(\bs_k^{(i)})-\frac{\Delta\bs_k^{(i)}}{\bs_k^{(i)}}+\frac{1}{2}\left(\frac{\Delta\bs_k^{(i)}}{\bs_k^{(i)}}\right)^2 \right|\\
& \qquad\qquad\qquad + \barmu_{\barK}\left| \|(c_k+G_k\Delta\bx_k; h_k +\bs_k + J_k\Delta\bx_k+\Delta\bs_k)\| - \|(c_{s_k};h_{s_k}+\bs_{s_k})\|  \right|. 
\end{align*} 
By the Taylor expansion of $f(\bx)$, the Lipschitz continuity of $\nabla f(\bx)$ (Assumption~\ref{assump1}), and the Cauchy-Schwarz inequality, we have\vskip-0.5cm
\begin{equation*}
\left| f_{s_k}-f_k-\barg_k^T\Delta\bx_k -\frac{1}{2}\Delta\bx_k^T\barH_k\Delta\bx_k \right| \leq \|\nabla f_k-\barg_k\| \|\Delta\bx_k \| +\frac{1}{2}(L_{\nabla f}+\kappa_B)\|\Delta\bx_k\|^2.   
\end{equation*}
Using the same derivations and recalling that $\bs_{s_k}=\bs_k+\Delta\bs_k$, we obtain\vskip-0.5cm
\begin{align*}
& \left|\|(c_{s_k} ; h_{s_k}+\bs_{s_k})\|-\|(c_k+G_k\Delta\bx_k ; h_k +\bs_k + J_k\Delta\bx_k+\Delta\bs_k)\|\right| \\
& \leq \|c_{s_k} - c_k - G_k\Delta\bx_k \| + \|h_{s_k}+\bs_{s_k} - h_k - \bs_k - J_k\Delta\bx_k - \Delta\bs_k \|  \leq \frac{L_G+L_J}{2}\|\Delta\bx_k\|^2.  
\end{align*}
Furthermore, for any scalars $\sigma,\sigma'$ satisfying $\sigma>0$, $\sigma'>-\epsilon_s\sigma$, we have
\begin{equation}\label{eq:sigma}
\left| \ln(\sigma+\sigma') - \ln(\sigma) - \frac{\sigma'}{\sigma} \right| \leq \sup_{t\in[\sigma,\sigma+\sigma']}\left| \frac{\sigma'}{t} - \frac{\sigma'}{\sigma}\right| =\frac{\sigma}{\sigma+\sigma'}\left( \frac{\sigma'}{\sigma} \right)^2 \leq \frac{1}{1-\epsilon_s}\left( \frac{\sigma'}{\sigma} \right)^2.
\end{equation}
Therefore, setting $\sigma = \bs_k^{(i)}$ and $\sigma'=\Delta \bs_k^{(i)}$ for $i=1,\dots,n$, we obtain \vskip-0.5cm
\begin{align*}
& \theta_k\sum_{i=1}^n \left| \ln(\bs_{s_k}^{(i)})-\ln(\bs_k^{(i)})-\frac{\Delta\bs_k^{(i)}}{\bs_k^{(i)}}+\frac{1}{2}\left(\frac{\Delta\bs_k^{(i)}}{\bs_k^{(i)}}\right)^2 \right| \\
& \leq \theta_k\sum_{i=1}^n \left| \ln(\bs_{s_k}^{(i)})-\ln(\bs_k^{(i)})-\frac{\Delta\bs_k^{(i)}}{\bs_k^{(i)}} \right| +\frac{1}{2}\theta_k\sum_{i=1}^n \left(\frac{\Delta\bs_k^{(i)}}{\bs_k^{(i)}}\right)^2\\
& \stackrel{\mathclap{\eqref{eq:sigma}}}{\leq} \theta_k\sum_{i=1}^n \left(\frac{1}{1-\epsilon_s}+\frac{1}{2}\right)\left(\frac{\Delta\bs_k^{(i)}}{\bs_k^{(i)}}\right)^2  = \theta_k\left(\frac{1}{1-\epsilon_s}+\frac{1}{2}\right) \|S_k^{-1}\Delta\bs_k\|^2.    
\end{align*}
Since $\barmu_{\barK}\leq\hatmu$ (Assumption~\ref{assump4}), $\|\barH_k\|\leq\kappa_B$ (Assumption~\ref{assump1}) and $\theta_k\leq \theta_0$, we combine the above displays and obtain \vskip-0.5cm
\begin{align*}
& \left|\L_{\barmu_{\barK},\theta_k}^{s_k}-\L_{\barmu_{\barK},\theta_k}^{k}-\text{Pred}_k\right| \\
& \leq \|\nabla f_k-\bar{g}_k\|\|\Delta\bx_k\|+\frac{L_{\nabla f}+\kappa_B+\hatmu (L_G+L_J)}{2}\|\Delta\bx_k\|^2+\theta_0\left(\frac{1}{1-\epsilon_s}+\frac{1}{2}\right) \|S_k^{-1}\Delta\bs_k\|^2\\
& \leq \|\nabla f_k-\bar{g}_k\|\Delta_k + \max\left\{\frac{L_{\nabla f}+\kappa_B+\hatmu (L_G+L_J)}{2},\theta_0\left(\frac{1}{1-\epsilon_s}+\frac{1}{2}\right) \right\} \Delta_k^2,
\end{align*}
where the last inequality follows from the trust-region constraint; that is, $\|\Delta\bx_k\|^2 + \|S_k^{-1}\Delta\bs_k\|^2 \leq \Delta_k^2$. Combining the above display with $\|\nabla f_k-\bar{g}_k\|\leq\kappa_g\Delta_k$ on the event $\A_k$, we complete the proof.
\end{proof}

In the next lemma, we demonstrate that if the estimates of the objective models are accurate and the trust-region radius is sufficiently small compared to $\|Q_k\|$ and $\epsilon_s$, the $k$-th iteration is guaranteed to be successful.

\begin{lemma}\label{lemma:guarantee_succ_step_KKT}

Under Assumptions \ref{assump1}, \ref{assump4}, and the event $\A_k\cap\B_k$, if for $k\geq\barK$
\begin{equation}\label{delta:lemma_guarantee_succ_step_KKT}
\Delta_k\leq \min\left\{\frac{(1-\eta)\kappa_{fcd} \|Q_k\|}{4\kappa_f+8\Upsilon_1+\kappa_g \kappa_{fcd} (1-\eta)},\frac{\|Q_k\|}{\kappa_g+\kappa_B+\theta_0},\epsilon_s\right\},
\end{equation}
then the $k$-th iteration is successful. 
\end{lemma}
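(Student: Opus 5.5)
The plan is to verify the two gates of Algorithm~\ref{Alg:STORM} in turn. First I show that Line~4 is not triggered. Then I show that the ratio test (\ref{def:accuracy}a) holds by comparing $\text{Ared}_k$ with $\text{Pred}_k$ through Lemma~\ref{lemma:diff_ared_pred_wo_corr_step}, using a lower bound on $-\text{Pred}_k$ obtained from the threshold \eqref{eq:threshold_Predk}.

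\textbf{Step 1 (Line 4 is not triggered).} The second term in \eqref{delta:lemma_guarantee_succ_step_KKT} gives $\|Q_k\|\ge(\kappa_g+\kappa_B+\theta_0)\Delta_k$. Since $\eta<1$, this is at least $\{\kappa_g+\eta(\kappa_B+\theta_0)\}\Delta_k$. Lemma~\ref{lemma:line6_not_hold} then applies on $\A_k$, so \eqref{def:acc1} holds and the trial step is computed. Once the while-loop exits, \eqref{eq:threshold_Predk} holds, and since $k\ge\barK$ we have $\barmu_k=\barmu_{\barK}$ by Assumption~\ref{assump4}.

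\textbf{Step 2 (lower bound on $-\text{Pred}_k$).} As in the proof of Lemma~\ref{lemma:line6_not_hold}, on $\A_k$ we have
\[
\|\bar Q_k\|\ge\|Q_k\|-\kappa_g\Delta_k\ge(\kappa_B+\theta_0)\Delta_k\ge\|\bar W_k\|\Delta_k .
\]
Hence $\|\bar Q_k\|/\|\bar W_k\|\ge\Delta_k$, and together with $\Delta_k\le\epsilon_s$ the minimum in \eqref{eq:threshold_Predk} equals $\Delta_k$. Therefore
\[
-\text{Pred}_k\ \ge\ \tfrac{\kappa_{fcd}}{2}\|\bar Q_k\|\Delta_k\ \ge\ \tfrac{\kappa_{fcd}}{2}\bigl(\|Q_k\|-\kappa_g\Delta_k\bigr)\Delta_k>0 .
\]

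\textbf{Step 3 (comparing $\text{Ared}_k$ and $\text{Pred}_k$).} The estimated merit values $\bar\L^{k}_{\barmu_k,\theta_k}$ and $\bar\L^{s_k}_{\barmu_k,\theta_k}$ differ from the true ones only through $\barf_k$ and $\barf_{s_k}$; the barrier and constraint terms are computed exactly. So on $\B_k$,
\[
|\text{Ared}_k-(\L^{s_k}_{\barmu_{\barK},\theta_k}-\L^{k}_{\barmu_{\barK},\theta_k})|\le e_k+e_{s_k}\le 2\kappa_f\Delta_k^2 .
\]
Adding Lemma~\ref{lemma:diff_ared_pred_wo_corr_step} gives $|\text{Ared}_k-\text{Pred}_k|\le(2\kappa_f+\Upsilon_1)\Delta_k^2$. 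Combining this with Step~2,
\[
\Bigl|\frac{\text{Ared}_k}{\text{Pred}_k}-1\Bigr|\le\frac{2(2\kappa_f+\Upsilon_1)\Delta_k}{\kappa_{fcd}(\|Q_k\|-\kappa_g\Delta_k)} .
\]
This is at most $1-\eta$ exactly when $\Delta_k\{4\kappa_f+2\Upsilon_1+(1-\eta)\kappa_{fcd}\kappa_g\}\le(1-\eta)\kappa_{fcd}\|Q_k\|$. The first term of \eqref{delta:lemma_guarantee_succ_step_KKT} implies this, since its denominator contains $8\Upsilon_1\ge2\Upsilon_1$. Hence $\text{Ared}_k/\text{Pred}_k\ge\eta$, and iteration $k$ is successful.

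\textbf{Main obstacle.} The delicate point is Step~2: the $\min$ in \eqref{eq:threshold_Predk} must collapse to $\Delta_k$, which is what makes $-\text{Pred}_k$ linear in $\Delta_k$. This needs both $\Delta_k\le\epsilon_s$ and the second bound of \eqref{delta:lemma_guarantee_succ_step_KKT}, which transfers the gap from $Q_k$ to $\bar Q_k$ on $\A_k$. One must also use $k\ge\barK$ so that the merit parameter in $\text{Ared}_k$ coincides with $\barmu_{\barK}$, the value used in Lemma~\ref{lemma:diff_ared_pred_wo_corr_step}.
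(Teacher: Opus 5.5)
Your proof is correct and follows the paper's argument essentially step for step. You rule out Line~4 via Lemma~\ref{lemma:line6_not_hold}, then collapse the $\min$ in \eqref{eq:threshold_Predk} to $\Delta_k$ using $\|\bar Q_k\|\ge(\kappa_B+\theta_0)\Delta_k$ and $\Delta_k\le\epsilon_s$, and finally bound $|\text{Ared}_k/\text{Pred}_k-1|$ via Lemma~\ref{lemma:diff_ared_pred_wo_corr_step} and the $2\kappa_f\Delta_k^2$ oracle error on $\B_k$. Your explicit substitution $\|\bar Q_k\|\ge\|Q_k\|-\kappa_g\Delta_k$ in the last step makes visible where the $\kappa_g\kappa_{fcd}(1-\eta)$ term in \eqref{delta:lemma_guarantee_succ_step_KKT} comes from, which the paper leaves implicit; as you note, $8\Upsilon_1$ there is more than needed.
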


\begin{proof}
To prove that the $k$-th iteration is successful, it suffices to show that Line 4 in Algorithm~\ref{Alg:STORM} is not triggered and $\text{Ared}_k/\text{Pred}_k\geq\eta$, i.e., (\ref{def:accuracy}a), is satisfied. Since $\eta<1$, we find \eqref{delta:lemma_guarantee_succ_step_KKT} implies \eqref{eq:lemma3:delta}, thus the conclusion of Lemma~\ref{lemma:line6_not_hold} indicates that Line 4 is not triggered. Next, we show $\text{Ared}_k/\text{Pred}_k\geq\eta$ holds. By the definition of $\text{Ared}_k$ in \eqref{def:Ared_k}, we have\vskip-0.5cm
\begin{align*}
\left|\frac{\text{Ared}_k}{\text{Pred}_k}-1\right| & = \left|\frac{\bar{\L}_{\barmu_{\barK},\theta_k}^{s_k}-\bar{\L}_{\barmu_{\barK},\theta_k}^k}{\text{Pred}_k}-1 \right|\\
& \leq \frac{|\bar{\L}_{\barmu_{\barK},\theta_k}^{s_k}-\L_{\barmu_{\barK},\theta_k}^{s_k}|+|\L_{\barmu_{\barK},\theta_k}^{s_k}-\L_{\barmu_{\barK},\theta_k}^k-\text{Pred}_k|+|\L_{\barmu_{\barK},\theta_k}^k-\bar{\L}_{\barmu_{\barK},\theta_k}^{k}|}{|\text{Pred}_k|}.
\end{align*} 
Note that \eqref{delta:lemma_guarantee_succ_step_KKT} implies $\|Q_k\|\geq (\kappa_g + \kappa_B + \theta_0)\Delta_k$. On the event $\A_k$, we also~have~$\|Q_k\| - \|\bar{Q}_k\| \leq \kappa_g \Delta_k$. Thus, $\|\bar{Q}_k\|\geq (\kappa_B + \theta_0)\Delta_k$.
By the algorithm design and the fact that $\max\{1,\|\bar{W}_k\|\}\leq\kappa_B+\theta_0$, we have \vskip-0.5cm
\begin{equation}\label{eq:lemma6_pred}
\text{Pred}_k
\stackrel{\eqref{eq:threshold_Predk}}{\leq}
-\frac{\kappa_{fcd}}{2} \cdot \|\bar{Q}_k\|\min\left\{\Delta_k,\epsilon_s,\frac{\|\bar{Q}_k\|}{\|\bar{W}_k\|}\right\}
\stackrel{\eqref{delta:lemma_guarantee_succ_step_KKT}}{=}
-\frac{\kappa_{fcd}}{2} \|\bar{Q}_k\|\Delta_k.
\end{equation}\vskip-0.10cm
\noindent Since $|\L_{\barmu_\barK,\theta_k}^{s_k}-\barL_{\barmu_\barK,\theta_k}^{s_k}|=|f_{s_k}-\barf_{s_k}|$ and $|\L_{\barmu_\barK,\theta_k}^k-\barL_{\barmu_\barK,\theta_k}^k|=|f_k-\barf_k|$, we have
\vskip-0.5cm
\begin{align*}
\left|\frac{\text{Ared}_k}{\text{Pred}_k}-1\right| & \leq \frac{\left|f_{s_k}-\barf_{s_k}\right|+\left|\L_{\barmu_{\barK},\theta_k}^{s_k}-\L_{\barmu_{\barK},\theta_k}^k-\text{Pred}_k\right|+\left|f_k-\barf_{k}\right|}{\left|\text{Pred}_k\right|} \\
& \leq \frac{ \left|\L_{\barmu_{\barK},\theta_k}^{s_k}-\L_{\barmu_{\barK},\theta_k}^k-\text{Pred}_k\right|+ 2\kappa_{f} \Delta_k^2}{\left|\text{Pred}_k\right|} \qquad(\text{since }\B_k \text{ holds}) \\
& \stackrel{\mathclap{\eqref{eq:ared-pred}} }{\leq } \; \frac{ \Upsilon_1 \Delta_k^2+ 2\kappa_{f} \Delta_k^2}{\left|\text{Pred}_k\right|} \stackrel{\eqref{eq:lemma6_pred}}{\leq} \frac{(4\kappa_f+2\Upsilon_1)\Delta_k}{\kappa_{fcd} \|\bar{Q}_k\|} \stackrel{\eqref{delta:lemma_guarantee_succ_step_KKT}}{\leq} 1-\eta.
\end{align*} \vskip-0.10cm
\noindent 
This is equivalent to $\text{Ared}_k/\text{Pred}_k\geq \eta$; thus, the $k$-th iteration must be successful.
\end{proof}

The next lemma shows that if the iteration is successful and objective value estimates are accurate, then the reduction in the \textit{true} merit function is $\mathcal{O}(\Delta_k^2)$.

\begin{lemma}\label{lemma:reduction_of_successful_iter}
Under Assumptions \ref{assump1}, \ref{assump4}, and the event $\B_k$, if the $k$-th iteration is successful for $k\geq \barK$, then\vskip-0.5cm
\begin{equation}\label{eq12}
\L_{\barmu_{\barK},\theta_k}^{k+1}-\L_{\barmu_{\barK},\theta_k}^k\leq -\frac{3\kappa_{fcd}}{8} \min\left\{1,\frac{\epsilon_s}{\Delta_{\max}}\right\}\eta^3\Delta_k^2.
\end{equation}
\end{lemma}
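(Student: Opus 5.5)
The plan is to compare the true merit decrease with the estimated one, $\text{Ared}_k$, and then bound $\text{Pred}_k$ from above using the two tests the algorithm has already passed.

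\textbf{Step 1: reduce to $\text{Pred}_k$.} Since the iteration is successful and $k\geq\barK$, Assumption~\ref{assump4} gives $\barmu_k=\barmu_{\barK}$. Also $(\bx_{k+1},\bs_{k+1})=(\bx_{s_k},\bs_{s_k})$. The barrier terms and the constraint-violation terms in \eqref{merit_func} are deterministic, so they coincide in $\L$ and $\bar{\L}$; only the objective values differ. This gives
\begin{equation*}
\L_{\barmu_{\barK},\theta_k}^{k+1}-\L_{\barmu_{\barK},\theta_k}^k=\text{Ared}_k+(f_{s_k}-\barf_{s_k})-(f_k-\barf_k)\leq \text{Ared}_k+2\kappa_f\Delta_k^2,
\end{equation*}
where the inequality uses the event $\B_k$. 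A successful iteration means Line 4 was not triggered, and \eqref{eq:threshold_Predk} then forces $\text{Pred}_k<0$. Condition (\ref{def:accuracy}a) therefore yields $\text{Ared}_k\leq\eta\,\text{Pred}_k$.

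\textbf{Step 2: bound $\text{Pred}_k$ in terms of $\Delta_k^2$.} Because Line 4 was not triggered, \eqref{def:acc1} gives $\|\bar{Q}_k\|\geq\eta\Delta_k\max\{1,\|\bar{W}_k\|\}$. Two consequences follow: $\|\bar{Q}_k\|\geq\eta\Delta_k$, and $\|\bar{Q}_k\|/\|\bar{W}_k\|\geq\eta\Delta_k$. Since $\Delta_k\leq\Delta_{\max}$, we also have $\epsilon_s\geq(\epsilon_s/\Delta_{\max})\Delta_k$. Using $\eta<1$, the minimum in \eqref{eq:threshold_Predk} is therefore at least $\min\{1,\epsilon_s/\Delta_{\max}\}\,\eta\Delta_k$. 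Substituting into \eqref{eq:threshold_Predk} gives
\begin{equation*}
\text{Pred}_k\leq-\frac{\kappa_{fcd}}{2}\min\left\{1,\frac{\epsilon_s}{\Delta_{\max}}\right\}\eta^2\Delta_k^2 .
\end{equation*}
Multiplying by $\eta$ then gives $\text{Ared}_k\leq-\frac{\kappa_{fcd}}{2}\min\{1,\epsilon_s/\Delta_{\max}\}\eta^3\Delta_k^2$.

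\textbf{Step 3: absorb the oracle error.} This is the only place where the choice of $\kappa_f$ enters, and it is the step that needs checking. Because $\epsilon_s<1$,
\begin{equation*}
\frac{\epsilon_s}{\max\{1,\Delta_{\max}\}}\leq\min\left\{1,\frac{\epsilon_s}{\Delta_{\max}}\right\}.
\end{equation*}
The upper bound $\kappa_f\leq\kappa_{fcd}\epsilon_s\eta^3/(16\max\{1,\Delta_{\max}\})$ therefore implies
\begin{equation*}
2\kappa_f\leq\frac{\kappa_{fcd}}{8}\min\left\{1,\frac{\epsilon_s}{\Delta_{\max}}\right\}\eta^3 .
\end{equation*}
Adding this to the bound on $\text{Ared}_k$ combines the coefficients as $-\tfrac{1}{2}+\tfrac{1}{8}=-\tfrac{3}{8}$, which is exactly \eqref{eq12}.
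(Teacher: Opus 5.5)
Your proof is correct and follows the paper's argument essentially step for step. You use \eqref{def:acc1} and \eqref{eq:threshold_Predk} to get $\text{Pred}_k\leq-\frac{\kappa_{fcd}}{2}\min\{1,\epsilon_s/\Delta_{\max}\}\eta^2\Delta_k^2$, pass to $\text{Ared}_k\leq\eta\,\text{Pred}_k$, and absorb the $2\kappa_f\Delta_k^2$ oracle error on $\B_k$ through the upper bound on $\kappa_f$. You also spell out two points the paper leaves implicit: that $\text{Pred}_k<0$, so the ratio test can be multiplied through, and that $\epsilon_s/\max\{1,\Delta_{\max}\}\leq\min\{1,\epsilon_s/\Delta_{\max}\}$.
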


\begin{proof}
For a successful iteration, Line 4 of the algorithm cannot be triggered, thus we must have
$\|\bar{Q}_k\|/\max\{1,\|\bar{W}_k\|\}\geq \eta\Delta_k$. This relation implies\vskip-0.5cm
\begin{align}\label{eq:pred_1st_order}
\text{Pred}_k & \stackrel{\mathclap{\eqref{eq:threshold_Predk}}}{\leq} -\frac{\kappa_{fcd}}{2} \|\bar{Q}_k\|\min\left\{ \Delta_k,\epsilon_s,\frac{\|\bar{Q}_k\|}{\|\bar{W}_k\|}\right\} \stackrel{\mathclap{\eqref{def:acc1}}}{\leq} -\frac{\kappa_{fcd}}{2} \eta\Delta_k\min\left\{ \Delta_k,\epsilon_s,\eta\Delta_k \right\} \notag\\
& \leq -\frac{\kappa_{fcd}}{2} \eta\Delta_k\min\left\{ \Delta_k,\frac{\epsilon_s}{\Delta_{\max}}\Delta_k,\eta\Delta_k \right\}  \leq -\frac{\kappa_{fcd}}{2} \min\left\{1,\frac{\epsilon_s}{\Delta_{\max}}\right\}\eta^2\Delta_k^2,
\end{align}
since $\eta<1$ and $\Delta_k \leq \Delta_{\max}$. On the event $\B_k$, we have $\left| f_{s_k}-\barf_{s_k} \right| + \left| f_k-\barf_k \right| \leq 2\kappa_f\Delta_k^2$. Recalling that $\text{Ared}_k =\bar{\L}_{\barmu_{\barK},\theta_k}^{k+1} - \bar{\L}_{\barmu_{\barK},\theta_k}^k  $ and $\text{Ared}_k/\text{Pred}_k\geq\eta$ for a successful iteration, we have
\begin{align}\label{eq:lemma8_1}
\L_{\barmu_{\barK},\theta_k}^{k+1} & -\L_{\barmu_{\barK},\theta_k}^k
\leq \left| \L_{\barmu_{\barK},\theta_k}^{k+1}-\bar{\L}_{\barmu_{\barK},\theta_k}^{k+1} \right| + \text{Ared}_k + \left| \bar{\L}_{\barmu_{\barK},\theta_k}^k-\L_{\barmu_{\barK},\theta_k}^k \right|  \notag \\
& \leq \left| f_{s_k}-\barf_{s_k} \right| + \eta \cdot \text{Pred}_k + \left| f_k-\barf_k \right|  \notag \\
& \stackrel{\mathclap{\eqref{eq:pred_1st_order}}}{\leq} \; 2\kappa_f\Delta_k^2 -\frac{\kappa_{fcd}}{2} \min\left\{1,\frac{\epsilon_s}{\Delta_{\max}}\right\}\eta^3\Delta_k^2   \leq -\frac{3\kappa_{fcd}}{8} \min\left\{1,\frac{\epsilon_s}{\Delta_{\max}}\right\}\eta^3\Delta_k^2,
\end{align} 
where we use the upper bound of $\kappa_f$ in the last inequality.
This completes the proof.
\end{proof}

To further the analysis, we define a rescaled $\ell_2$ merit function:\vskip-0.5cm
\begin{equation*}
\widehat{\L}_{\barmu,\theta}(\bx,\bs) \coloneqq f(\bx)-\theta\sum_{i=1}^n \ln \left(\frac{\bs^{(i)}}{\Upsilon_s}\right) +\barmu\|(c(\bx); h(\bx)+\bs)\|= \L_{\barmu,\theta}(\bx,\bs) + n \theta \ln (\Upsilon_s),
\end{equation*}\vskip-0.1cm
\noindent which shifts the $\ell_2$ merit function in \eqref{merit_func} by a constant. Thus, the reduction in the~merit function is preserved. Specifically, for a fixed barrier parameter $\theta$ and any two points $(\bx, \bs)$ and $(\bx', \bs')$, we have\vskip-0.5cm
\begin{equation}\label{eq:equiv_of_merit_func}
\widehat{\L}_{\barmu,\theta}(\bx,\bs) - \widehat{\L}_{\barmu,\theta}(\bx',\bs') = \L_{\barmu,\theta}(\bx,\bs) - \L_{\barmu,\theta}(\bx',\bs').   
\end{equation}\vskip-0.15cm
\noindent As a result, the conclusions of Lemmas \ref{lemma:diff_ared_pred_wo_corr_step}, \ref{lemma:guarantee_succ_step_KKT}, and \ref{lemma:reduction_of_successful_iter} extend directly to the rescaled merit function. 

Rescaling equips the merit function with two features. First, since $\ln(\bs^{(i)} / \Upsilon_s) \leq 0$, we have $f_{\inf}\leq \widehat{\L}_{\barmu,\theta}(\bx,\bs)$. Second, the rescaled merit function is monotone increasing~in the barrier parameter $\theta$. Specifically, for any $\widehat{\theta} \leq \theta$, we have $\widehat{\L}_{\barmu,\widehat{\theta}}(\bx,\bs)\leq \widehat{\L}_{\barmu,\theta}(\bx,\bs)$. This property is crucial as our barrier parameters indeed satisfy $\theta_{k+1} \leq \theta_k$, leading to \vskip-0.5cm
\begin{equation}\label{eq:merit1}
\widehat{\L}_{\barmu_{\barK},\theta_{k+1}}^{k+1}-\widehat{\L}_{\barmu_{\barK},\theta_k}^k\leq \widehat{\L}_{\barmu_{\barK},\theta_k}^{k+1}-\widehat{\L}_{\barmu_{\barK},\theta_k}^k,    
\end{equation}\vskip-0.1cm
\noindent which allows us to accumulate all the reductions over $k \geq \bar{K}$.

To establish global convergence, we further leverage a Lyapunov function, defined based on the rescaled merit function for $k \geq \barK$:\vskip-0.5cm
\begin{equation*}
\Phi_{\barmu_{\barK},\theta_k}^k=\nu\widehat{\L}_{\barmu_{\barK},\theta_k}^k+\frac{1-\nu}{2}\Delta_k^2+\frac{1-\nu}{2}\barepsilon_k,
\end{equation*}\vskip-0.15cm
\noindent where $\nu\in(0,1)$ is a constant satisfying
\begin{equation}\label{eq:nu_and_1-nu}
\frac{\nu}{1-\nu}\geq \max\left\{\frac{2(\gamma-1)}{\eta},\frac{4\gamma^2}{\kappa_{fcd}\eta^3\min\{1,\epsilon_s/\Delta_{\max}\}}\right\}.
\end{equation}
We investigate the global convergence of Algorithm~\ref{Alg:STORM} by examining the reduction in the Lyapunov function. 

Let us first consider the case when the estimates of objective values are accurate.

\begin{lemma}\label{lemma:ABCC'_hold}
Under Assumptions \ref{assump1}, \ref{assump4}, and the event $\B_k$, suppose $\nu$ satisfies \eqref{eq:nu_and_1-nu} and $k\geq\barK$, we have
\begin{equation}\label{eq:lemma_phi_reduction_all_acc}
\Phi_{\barmu_{\barK},\theta_{k+1}}^{k+1}-\Phi_{\barmu_{\barK},\theta_k}^k \leq \frac{1-\nu}{2}\left(\frac{1}{\gamma^2}-1\right)\Delta_k^2+\frac{1-\nu}{2}\left(\frac{1}{\gamma}-1\right)\barepsilon_k.
\end{equation}
\end{lemma}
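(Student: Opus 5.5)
We split by the outcome of iteration $k$: unsuccessful (either Line 4 triggered or (\ref{def:accuracy}a) failing), successful and reliable, and successful but unreliable. In every case we first use \eqref{eq:merit1} to replace $\widehat{\L}_{\barmu_{\barK},\theta_{k+1}}^{k+1}-\widehat{\L}_{\barmu_{\barK},\theta_k}^k$ by $\widehat{\L}_{\barmu_{\barK},\theta_k}^{k+1}-\widehat{\L}_{\barmu_{\barK},\theta_k}^k$. By \eqref{eq:equiv_of_merit_func}, this equals the corresponding difference of $\L_{\barmu_{\barK},\theta_k}$.

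\textbf{Unsuccessful case.} Here $(\bx_{k+1},\bs_{k+1})=(\bx_k,\bs_k)$, so the merit term contributes at most zero. We also have $\Delta_{k+1}=\Delta_k/\gamma$ and $\barepsilon_{k+1}=\barepsilon_k/\gamma$. The change of $\Phi$ is therefore at most $\frac{1-\nu}{2}(\gamma^{-2}-1)\Delta_k^2+\frac{1-\nu}{2}(\gamma^{-1}-1)\barepsilon_k$, which is exactly the bound.

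\textbf{Successful case.} Since $\B_k$ holds, Lemma \ref{lemma:reduction_of_successful_iter} gives a merit decrease of at least $\frac{3\kappa_{fcd}}{8}\min\{1,\epsilon_s/\Delta_{\max}\}\eta^3\Delta_k^2$. The radius term increases by at most $\frac{1-\nu}{2}(\gamma^2-1)\Delta_k^2$.

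\emph{Unreliable subcase.} Here $\barepsilon$ shrinks, contributing exactly $\frac{1-\nu}{2}(\gamma^{-1}-1)\barepsilon_k$. It then suffices to show
\[
-\nu\tfrac{3\kappa_{fcd}}{8}\min\{1,\epsilon_s/\Delta_{\max}\}\eta^3\Delta_k^2+\tfrac{1-\nu}{2}(\gamma^2-1)\Delta_k^2\le \tfrac{1-\nu}{2}(\gamma^{-2}-1)\Delta_k^2 .
\]
This amounts to $\nu\frac{3\kappa_{fcd}}{8}\min\{\cdot\}\eta^3\ge\frac{1-\nu}{2}(\gamma^2-\gamma^{-2})$. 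It follows from the second term in \eqref{eq:nu_and_1-nu}, since $\frac{\nu}{1-\nu}\cdot\frac{3\kappa_{fcd}}{8}\min\{\cdot\}\eta^3\ge\frac{3}{2}\gamma^2\ge\frac12(\gamma^2-\gamma^{-2})$.

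\emph{Reliable subcase (the main obstacle).} Now $\barepsilon_{k+1}=\gamma\barepsilon_k$, so the $\barepsilon$-term increases by $\frac{1-\nu}{2}(\gamma-1)\barepsilon_k$, and this must be absorbed together with a deficit of $\frac{1-\nu}{2}(1-\gamma^{-1})\barepsilon_k$. The key is that reliability means $-\text{Pred}_k\ge\barepsilon_k$. I would therefore redo the merit bound of Lemma \ref{lemma:reduction_of_successful_iter} keeping the $\text{Pred}_k$ term explicit rather than discarding it. Using $\B_k$ and $\text{Ared}_k\le\eta\text{Pred}_k$, the merit change is at most $2\kappa_f\Delta_k^2+\eta\,\text{Pred}_k$. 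Splitting $\eta\text{Pred}_k=\frac{\eta}{2}\text{Pred}_k+\frac{\eta}{2}\text{Pred}_k$, we bound one half by $-\frac{\eta}{2}\barepsilon_k$. We bound the other half by \eqref{eq:pred_1st_order}, giving $-\frac{\kappa_{fcd}}{4}\min\{\cdot\}\eta^3\Delta_k^2$, which still dominates $2\kappa_f\Delta_k^2$ up to a constant factor by the choice of $\kappa_f\le\kappa_{fcd}\epsilon_s\eta^3/(16\max\{1,\Delta_{\max}\})$. This yields a merit decrease of at least $\frac{\kappa_{fcd}}{8}\min\{\cdot\}\eta^3\Delta_k^2+\frac{\eta}{2}\barepsilon_k$.

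Then the $\barepsilon$-requirement becomes $\nu\frac{\eta}{2}\ge\frac{1-\nu}{2}(\gamma-\gamma^{-1})$, which holds because $\frac{\nu}{1-\nu}\ge\frac{2(\gamma-1)}{\eta}$ and $\gamma-\gamma^{-1}\le 2(\gamma-1)$ for $\gamma>1$. The $\Delta$-requirement becomes $\nu\frac{\kappa_{fcd}}{8}\min\{\cdot\}\eta^3\ge\frac{1-\nu}{2}(\gamma^2-\gamma^{-2})$, which again follows from the second term in \eqref{eq:nu_and_1-nu}, since that term gives at least $\frac{1-\nu}{2}\gamma^2$. The delicate point is checking the constants: $\kappa_f$ must be small enough to survive the halving of $\text{Pred}_k$, and the exact form of \eqref{eq:nu_and_1-nu} must cover both requirements. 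If the split proves too tight, I would use an uneven split between the two copies of $\text{Pred}_k$.
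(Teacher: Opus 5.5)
Your proposal is correct and follows essentially the paper's own proof. It uses the same three-way split into reliable, unreliable and unsuccessful iterations, and the unreliable case likewise invokes Lemma~\ref{lemma:reduction_of_successful_iter}. In the reliable case the paper makes the same halving of $\eta\,\text{Pred}_k$ (one half via $-\text{Pred}_k\ge\barepsilon_k$, the other via \eqref{eq:pred_1st_order}, with the $\kappa_f$ bound absorbing $2\kappa_f\Delta_k^2$). Your constant checks against \eqref{eq:nu_and_1-nu} all go through, so the fallback uneven split is not needed.
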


\begin{proof}
We first fix the barrier parameter and investigate $\Phi_{\barmu_{\barK},\theta_{k}}^{k+1}-\Phi_{\barmu_{\barK},\theta_k}^k$. 
% We consider two cases: (1)  $\Delta_k\leq \min\{\frac{(1-\eta)\kappa_{fcd}  \|Q_k\|}{4\kappa_f+8\Upsilon_1+\kappa_g(1-\eta)\kappa_{fcd} },\frac{\|Q_k\|}{\kappa_B+\kappa_g+\theta_0},\epsilon_s\}$ and (2)  its complement. 
We consider the following three cases:

\noindent $\bullet$ \noindent\textbf{Reliable iteration:} When the iteration is reliable, we have\vskip-0.5cm
\begin{align}\label{eq14}
\L_{\barmu_{\barK},\theta_{k}}^{k+1} - \L_{\barmu_{\barK},\theta_{k}}^k \;  & \leq \left| \L_{\barmu_{\barK},\theta_k}^{k+1}-\bar{\L}_{\barmu_{\barK},\theta_k}^{k+1} \right| + \text{Ared}_k + \left| \bar{\L}_{\barmu_{\barK},\theta_k}^k-\L_{\barmu_{\barK},\theta_k}^k \right|  \notag \\
& \stackrel{\mathclap{(\ref{def:accuracy}a)}}{\leq}\; \left| f_{s_k}-\barf_{s_k} \right| + \eta \cdot \text{Pred}_k + \left| f_k-\barf_k \right|  \notag \\
& \stackrel{\mathclap{(\ref{def:accuracy}b)}}{\leq}\; \left|f_{s_k} - \barf_{s_k}\right| + \left|f_k - \barf_k\right| + \frac{1}{2}\eta  \text{Pred}_k -\frac{1}{2} \eta \barepsilon_k.
\end{align}\vskip-0.15cm
\noindent On the event $\B_k$, we have $ |f_{s_k} - \barf_{s_k}| + |f_k - \barf_k| \leq 2\kappa_f\Delta_k^2$. Since the iteration is successful, \eqref{eq:pred_1st_order} holds, and we thus have\vskip-0.5cm
\begin{align}\label{eq13}
\L_{\barmu_{\barK},\theta_{k}}^{k+1} - \L_{\barmu_{\barK},\theta_{k}}^k 
& \leq 2\kappa_f\Delta_k^2 - \frac{\kappa_{fcd}}{4} \min\left\{1,\frac{\epsilon_s}{\Delta_{\max}}\right\}\eta^3 \Delta_k^2 -\frac{1}{2} \eta \barepsilon_k  
\notag \\
& \leq - \frac{\kappa_{fcd}}{8} \min\left\{1,\frac{\epsilon_s}{\Delta_{\max}}\right\}\eta^3 \Delta_k^2 -\frac{1}{2} \eta \barepsilon_k,
\end{align}\vskip-0.15cm
\noindent where the upper bound of $\kappa_f$ is used in the last inequality.
For a reliable iteration, $\Delta_{k+1} \leq \gamma\Delta_k$ and $\barepsilon_{k+1}=\gamma\barepsilon_k$. Since \eqref{eq:nu_and_1-nu} implies $\frac{1}{4}\nu\eta\barepsilon_k \geq \frac{1-\nu}{2}(\gamma-1)\barepsilon_k$, we have\vskip-0.5cm
\begin{align}\label{eq:11}
 \Phi_{\barmu_{\barK},\theta_k}^{k+1} -\Phi_{\barmu_{\barK},\theta_k}^k 
& \stackrel{\mathclap{\eqref{eq:equiv_of_merit_func}}}{=} \nu(\L_{\barmu_{\barK},\theta_k}^{k+1}-\L_{\barmu_{\barK},\theta_k}^k)+\frac{1-\nu}{2}(\Delta_{k+1}^2-\Delta_k^2) + \frac{1-\nu}{2}( \barepsilon_{k+1} -\barepsilon_k) \notag \\
&  \stackrel{\mathclap{\eqref{eq13}}}{\leq} - \frac{\nu \kappa_{fcd}}{8} \min\left\{1,\frac{\epsilon_s}{\Delta_{\max}}\right\}\eta^3 \Delta_k^2 - \frac{1}{4}\nu\eta \barepsilon_k + \frac{1-\nu}{2}(\gamma^2-1)\Delta_k^2.
\end{align}\vskip-0.2cm
\noindent
$\bullet$ \textbf{Unreliable iteration:} In this case, $\Delta_{k+1}\leq \gamma\Delta_k$ and $\barepsilon_{k+1}=\barepsilon_k/\gamma$; thus\vskip-0.5cm
\begin{align}\label{eq:12}
\Phi_{\barmu_{\barK},\theta_k}^{k+1} & -\Phi_{\barmu_{\barK},\theta_k}^k  \stackrel{\mathclap{\eqref{eq:equiv_of_merit_func}}}{=} \nu(\L_{\barmu_{\barK},\theta_k}^{k+1}-\L_{\barmu_{\barK},\theta_k}^k)+\frac{1-\nu}{2}(\Delta_{k+1}^2-\Delta_k^2) + \frac{1-\nu}{2}( \barepsilon_{k+1} -\barepsilon_k) \notag \\
& \stackrel{\mathclap{\eqref{eq12}}}{\leq} - \frac{3\nu \kappa_{fcd}}{8}\min\left\{1,\frac{\epsilon_s}{\Delta_{\max}}\right\}\eta^3 \Delta_k^2 +\frac{1-\nu}{2}(\gamma^2-1)\Delta_k^2 + \frac{1-\nu}{2}\left(\frac{1}{\gamma}-1\right)\barepsilon_k.
\end{align}\vskip-0.15cm
\noindent $\bullet$ \textbf{Unsuccessful iteration:} In this case, $\bx_{k+1}=\bx_k,\bs_{k+1}=\bs_k$, $\Delta_{k+1}=\Delta_k/\gamma$, and $\barepsilon_{k+1}=\barepsilon_k/\gamma$. Therefore, we have $\widehat{\L}_{\barmu_{\barK},\theta_k}^{k+1}-\widehat{\L}_{\barmu_{\barK},\theta_k}^k=\L_{\barmu_{\barK},\theta_k}^{k+1}-\L_{\barmu_{\barK},\theta_k}^k=0$ and\vskip-0.5cm
\begin{align}\label{eq:Phi_unsuccessful}
\Phi_{\barmu_{\barK},\theta_k}^{k+1}-\Phi_{\barmu_{\barK},\theta_k}^k & \stackrel{\mathclap{\eqref{eq:equiv_of_merit_func}}}{=} \nu(\L_{\barmu_{\barK},\theta_k}^{k+1}-\L_{\barmu_{\barK},\theta_k}^k)+\frac{1-\nu}{2}(\Delta_{k+1}^2-\Delta_k^2) + \frac{1-\nu}{2}( \barepsilon_{k+1} -\barepsilon_k) \notag \\
& \leq \frac{1-\nu}{2}\left(\frac{1}{\gamma^2}-1\right)\Delta_k^2 + \frac{1-\nu}{2}\left(\frac{1}{\gamma}-1\right)\barepsilon_k.
\end{align}\vskip-0.1cm
\noindent Since \eqref{eq:nu_and_1-nu} implies $- \frac{\nu \kappa_{fcd}}{8} \min\{1,\frac{\epsilon_s}{\Delta_{\max}}\}\eta^3 \Delta_k^2 +\frac{1-\nu}{2}(\gamma^2-1)\Delta_k^2 \leq \frac{1-\nu}{2}(\frac{1}{\gamma^2}-1)\Delta_k^2$
and $\frac{1}{4}\nu\eta\barepsilon_k \geq \frac{1-\nu}{2}(1-\frac{1}{\gamma})\barepsilon_k$, combining \eqref{eq:11}, \eqref{eq:12}, and \eqref{eq:Phi_unsuccessful}, we have\vskip-0.5cm 
\begin{equation}\label{eq:case2_final}
\Phi_{\barmu_{\barK},\theta_k}^{k+1}-\Phi_{\barmu_{\barK},\theta_k}^k \leq \frac{1-\nu}{2}\left(\frac{1}{\gamma^2}-1\right)\Delta_k^2 + \frac{1-\nu}{2}\left(\frac{1}{\gamma}-1\right)\barepsilon_k.    
\end{equation}\vskip-0.15cm
\noindent The proof is complete by 
using the relation that $\Phi_{\barmu_{\barK},\theta_{k+1}}^{k+1}-\Phi_{\barmu_{\barK},\theta_k}^k \leq \Phi_{\barmu_{\barK},\theta_k}^{k+1}-\Phi_{\barmu_{\barK},\theta_k}^k$, as implied by \eqref{eq:merit1}.
\end{proof}

We now examine the reduction in the Lyapunov function when estimates of the objective values are inaccurate.

\begin{lemma}\label{lemma:Phi_(ABCC')^c}
Under Assumptions \ref{assump1}, \ref{assump4}, and the event $\B_k^c$, suppose $\nu$ satisfies \eqref{eq:nu_and_1-nu} and $k\geq \barK$, we have\vskip-0.5cm
\begin{equation}\label{eq:lemma_phi_reduction_not_all_acc}
\Phi_{\barmu_{\barK},\theta_{k+1}}^{k+1}-\Phi_{\barmu_{\barK},\theta_k}^k \leq \nu (e_k + e_{s_k}) + \frac{1-\nu}{2}\left(\frac{1}{\gamma^2}-1\right)\Delta_k^2 + \frac{1-\nu}{2}\left(\frac{1}{\gamma}-1\right)\barepsilon_k.
\end{equation}
\end{lemma}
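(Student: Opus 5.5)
The plan is to follow the case split of Lemma~\ref{lemma:ABCC'_hold}: first bound $\Phi_{\barmu_{\barK},\theta_k}^{k+1}-\Phi_{\barmu_{\barK},\theta_k}^k$ with the barrier parameter held fixed, then pass to $\theta_{k+1}$ using \eqref{eq:merit1}. The only change is that on $\B_k^c$ we can no longer replace $|f_{s_k}-\barf_{s_k}|+|f_k-\barf_k|$ by $2\kappa_f\Delta_k^2$. Instead we keep it as $e_k+e_{s_k}$. As in Lemma~\ref{lemma:ABCC'_hold}, we use \eqref{eq:equiv_of_merit_func} to express $\Phi$-differences through $\L$-differences.

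\emph{Unsuccessful iteration.} The iterate does not move, $\Delta_{k+1}=\Delta_k/\gamma$ and $\barepsilon_{k+1}=\barepsilon_k/\gamma$. The computation in \eqref{eq:Phi_unsuccessful} goes through verbatim, and adding the nonnegative term $\nu(e_k+e_{s_k})$ gives the claim.

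\emph{Successful iteration (reliable or unreliable).} Write
\[
\L_{\barmu_{\barK},\theta_k}^{k+1}-\L_{\barmu_{\barK},\theta_k}^k \leq e_{s_k}+\text{Ared}_k+e_k \leq e_k+e_{s_k}+\eta\,\text{Pred}_k ,
\]
as in \eqref{eq14}. In the reliable case, split $\eta\,\text{Pred}_k\le \frac12\eta\,\text{Pred}_k-\frac12\eta\barepsilon_k$ using (\ref{def:accuracy}b). Then bound $\text{Pred}_k$ by \eqref{eq:pred_1st_order}, which holds because Line~4 was not triggered and does not depend on $\B_k$. This gives
\[
\L^{k+1}-\L^k\le e_k+e_{s_k}-\tfrac{\kappa_{fcd}}{4}\min\{1,\epsilon_s/\Delta_{\max}\}\eta^3\Delta_k^2-\tfrac12\eta\barepsilon_k .
\]
In the unreliable case, we similarly get
\[
\L^{k+1}-\L^k\le e_k+e_{s_k}-\tfrac{\kappa_{fcd}}{2}\min\{1,\epsilon_s/\Delta_{\max}\}\eta^3\Delta_k^2 .
\]
Multiplying by $\nu$ and adding the $\Delta$ and $\barepsilon$ changes ($\Delta_{k+1}\le\gamma\Delta_k$, with $\barepsilon_{k+1}=\gamma\barepsilon_k$ or $\barepsilon_k/\gamma$) reproduces \eqref{eq:11} and \eqref{eq:12}, each with the extra term $\nu(e_k+e_{s_k})$. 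Those bounds even carry slightly better constants, since the $2\kappa_f\Delta_k^2$ term no longer has to be absorbed. The two consequences of \eqref{eq:nu_and_1-nu} used at the end of Lemma~\ref{lemma:ABCC'_hold} then convert these into the right-hand side of \eqref{eq:lemma_phi_reduction_not_all_acc}:
\[
-\tfrac{\nu\kappa_{fcd}}{8}\min\{\cdot\}\eta^3\Delta_k^2+\tfrac{1-\nu}{2}(\gamma^2-1)\Delta_k^2\le \tfrac{1-\nu}{2}(\gamma^{-2}-1)\Delta_k^2,
\qquad
\tfrac14\nu\eta\barepsilon_k\ge \tfrac{1-\nu}{2}(\gamma-1)\barepsilon_k .
\]

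Finally, apply \eqref{eq:merit1}: the rescaled merit function is monotone in $\theta$, while the $\Delta$ and $\barepsilon$ parts of $\Phi$ do not depend on $\theta$. Hence
\[
\Phi^{k+1}_{\theta_{k+1}}-\Phi^k_{\theta_k}\le\Phi^{k+1}_{\theta_k}-\Phi^k_{\theta_k},
\]
which completes the argument.

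The main point needing care is that the reduction bound for $\text{Pred}_k$ in \eqref{eq:pred_1st_order} uses only \eqref{def:acc1} and \eqref{eq:threshold_Predk}, not $\A_k$ or $\B_k$, so it remains valid on $\B_k^c$. The rest is bookkeeping.
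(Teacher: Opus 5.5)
Your proposal is correct and follows the paper's proof essentially line by line. It uses the same reliable/unreliable/unsuccessful split, reuses \eqref{eq14} and \eqref{eq:pred_1st_order} (neither depends on $\B_k$) while keeping $e_k+e_{s_k}$ in place of $2\kappa_f\Delta_k^2$, absorbs terms via the consequences of \eqref{eq:nu_and_1-nu}, and finishes with \eqref{eq:merit1}. One step is left implicit but is immediate: converting the leftover $-\frac14\nu\eta\barepsilon_k$ in the reliable case into $\frac{1-\nu}{2}(\frac1\gamma-1)\barepsilon_k$ needs your displayed $\barepsilon$-inequality together with $1-\frac1\gamma\le\gamma-1$.
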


\begin{proof}
\vskip-0.2cm
In the proof, we consider the following three cases.

\noindent $\bullet$ \textbf{Reliable iteration:} Noticing that the derivation of \eqref{eq14} does not rely on $\B_k$, we still have \eqref{eq14} in this case:\vskip-0.5cm
\begin{align*}
\Phi_{\barmu_{\barK},\theta_k}^{k+1}-\Phi_{\barmu_{\barK},\theta_k}^k & \stackrel{\mathclap{\eqref{eq:equiv_of_merit_func}}}{=} \nu(\L_{\barmu_{\barK},\theta_k}^{k+1}-\L_{\barmu_{\barK},\theta_k}^k)+\frac{1-\nu}{2}(\Delta_{k+1}^2-\Delta_k^2) + \frac{1-\nu}{2}( \barepsilon_{k+1} -\barepsilon_k) \notag \\
& \stackrel{\mathclap{\eqref{eq14}}}{\leq} \nu (\left|f_{s_k} - \barf_{s_k}\right| + \left|f_k - \barf_k\right|) + \frac{1}{2}\nu\eta  \text{Pred}_k -\frac{1}{2} \nu \eta \barepsilon_k \\
& \quad +\frac{1-\nu}{2}(\gamma^2-1)\Delta_k^2 + \frac{1-\nu}{2}(\gamma-1) \barepsilon_k.
\end{align*}\vskip-0.2cm
\noindent Since the iteration is reliable, thus successful, we have \eqref{eq:pred_1st_order}. Noting that \eqref{eq:nu_and_1-nu} implies $\frac{1}{4}\nu\barepsilon_k \geq \frac{1-\nu}{2}(\gamma-1)\barepsilon_k$, we have\vskip-0.5cm
\begin{align*}
\Phi_{\barmu_{\barK},\theta_k}^{k+1}-\Phi_{\barmu_{\barK},\theta_k}^k & 
\stackrel{\mathclap{\eqref{eq:pred_1st_order}}}{\leq} \nu(e_k + e_{s_k}) \\
& \quad - \frac{\nu \kappa_{fcd}}{4} \min\left\{1,\frac{\epsilon_s}{\Delta_{\max}}\right\}\eta^3 \Delta_k^2 - \frac{1}{4}\nu\eta \barepsilon_k + \frac{1-\nu}{2}(\gamma^2-1)\Delta_k^2.
\end{align*}\vskip-0.2cm
\noindent 
$\bullet$ \textbf{Unreliable iteration:} In this case, as $\Delta_{k+1}\leq\gamma\Delta_k,\barepsilon_{k+1}=\barepsilon_k/\gamma$, we have\vskip-0.5cm
\begin{align*}
& \Phi_{\barmu_{\barK},\theta_k}^{k+1}-\Phi_{\barmu_{\barK},\theta_k}^k \stackrel{\mathclap{\eqref{eq:equiv_of_merit_func}}}{=} \nu(\L_{\barmu_{\barK},\theta_k}^{k+1}-\L_{\barmu_{\barK},\theta_k}^k)+\frac{1-\nu}{2}(\Delta_{k+1}^2-\Delta_k^2) + \frac{1-\nu}{2}( \barepsilon_{k+1} -\barepsilon_k) \notag \\
& \stackrel{\mathclap{\eqref{eq14}}}{\leq} \nu (\left|f_{s_k} - \barf_{s_k}\right| + \left|f_k - \barf_k\right|) + \nu\eta  \text{Pred}_k  +\frac{1-\nu}{2}(\gamma^2-1)\Delta_k^2 + \frac{1-\nu}{2}\left(\frac{1}{\gamma}-1\right)\barepsilon_k\\
& \stackrel{\mathclap{\eqref{eq:pred_1st_order}}}{\leq } \nu(e_k + e_{s_k}) - \frac{\nu \kappa_{fcd}}{2} \min\left\{1,\frac{\epsilon_s}{\Delta_{\max}}\right\}\eta^3 \Delta_k^2 + \frac{1-\nu}{2}(\gamma^2-1)\Delta_k^2 + \frac{1-\nu}{2}\left(\frac{1}{\gamma}-1\right)\barepsilon_k.
\end{align*}\vskip-0.2cm
\noindent
$\bullet$ \textbf{Unsuccessful iteration:} In this case, \eqref{eq:Phi_unsuccessful} holds.

Combining the three cases, noting that \eqref{eq:nu_and_1-nu} implies $\frac{1}{4}\nu\eta\barepsilon_k \geq \frac{1-\nu}{2}\left(1-\frac{1}{\gamma}\right)\barepsilon_k$ and\vskip-0.5cm
\begin{equation}\label{eq:eq:lemma_(ABCC')^c_1}
- \frac{\nu \kappa_{fcd}}{4} \min\left\{1,\frac{\epsilon_s}{\Delta_{\max}}\right\}\eta^3 \Delta_k^2 +\frac{1-\nu}{2}(\gamma^2-1)\Delta_k^2 \leq \frac{1-\nu}{2}\left(\frac{1}{\gamma^2}-1\right)\Delta_k^2,
\end{equation}\vskip-0.2cm
\noindent and using the inequality $\Phi_{\barmu_{\barK},\theta_{k+1}}^{k+1}-\Phi_{\barmu_{\barK},\theta_k}^k \leq \Phi_{\barmu_{\barK},\theta_k}^{k+1}-\Phi_{\barmu_{\barK},\theta_k}^k$, we complete the proof.
\end{proof}

Lemma \ref{lemma:ABCC'_hold} shows that when objective value estimates are accurate, a decrease in the Lyapunov function is guaranteed. Lemma~\ref{lemma:Phi_(ABCC')^c} shows that when objective value estimates are inaccurate, the value of the Lyapunov function may increase. In the next lemma, we show that as long as the probability of obtaining an inaccurate objective value estimate is below a deterministic threshold, a reduction in the Lyapunov function is guaranteed in expectation.

\begin{lemma}\label{lemma:One_Step_Rec}
Suppose that Assumptions \ref{assump1} and \ref{assump4} hold, $\nu$ satisfies \eqref{eq:nu_and_1-nu}, and $k\geq \barK$. 
If $p_f\leq \frac{(1-\nu)^2}{16\nu^2}(1-\frac{1}{\gamma})^2$, then we have\vskip-0.5cm
\begin{equation}\label{eq:One_Step_Rec}
\mE[\Phi_{\barmu_{\barK},\theta_{k+1}}^{k+1}\mid\F_{k-1}]-\Phi_{\barmu_{\barK},\theta_k}^k \leq \frac{1-\nu}{2}\left(\frac{1}{\gamma^2}-1\right)\Delta_k^2. 
\end{equation}
\end{lemma}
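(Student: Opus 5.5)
We combine Lemmas~\ref{lemma:ABCC'_hold} and \ref{lemma:Phi_(ABCC')^c} through the indicator of $\B_k$. For $k\geq\barK$, the two lemmas together give the pointwise bound
\begin{equation*}
\Phi_{\barmu_{\barK},\theta_{k+1}}^{k+1}-\Phi_{\barmu_{\barK},\theta_k}^k \leq \nu (e_k+e_{s_k})\mathbf{1}_{\B_k^c} + \frac{1-\nu}{2}\left(\frac{1}{\gamma^2}-1\right)\Delta_k^2+\frac{1-\nu}{2}\left(\frac{1}{\gamma}-1\right)\barepsilon_k .
\end{equation*}
We then take conditional expectation given $\F_{k-1}$. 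Since $\Delta_k,\barepsilon_k$ and $\Phi^k$ are $\F_{k-1}$-measurable, the last two terms pass through unchanged. It therefore suffices to show
\begin{equation*}
\nu\,\mE\bigl[(e_k+e_{s_k})\mathbf{1}_{\B_k^c}\mid\F_{k-1}\bigr]\leq \frac{1-\nu}{2}\left(1-\frac{1}{\gamma}\right)\barepsilon_k .
\end{equation*}

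For each error term we apply Cauchy--Schwarz conditionally: $\mE[e_{s_k}\mathbf{1}_{\B_k^c}\mid \F_{k-1/2}]\leq \sqrt{\mE[e_{s_k}^2\mid\F_{k-1/2}]}\sqrt{P(\B_k^c\mid\F_{k-1/2})}$. Conditioning on $\F_{k-1/2}$ fixes $\bx_k,\Delta\bx_k$. The reliability condition \eqref{def:reliable_est} then bounds the second moment by $\barepsilon_k^2$, and \eqref{def:Bk} bounds the probability by $p_f$. This gives $\mE[e_{s_k}\mathbf{1}_{\B_k^c}\mid\F_{k-1/2}]\leq \barepsilon_k\sqrt{p_f}$, and the same bound holds for $e_k$. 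Applying the tower property down to $\F_{k-1}$, and using that $\barepsilon_k$ is $\F_{k-1}$-measurable, we obtain $\nu\,\mE[(e_k+e_{s_k})\mathbf{1}_{\B_k^c}\mid\F_{k-1}]\leq 2\nu\sqrt{p_f}\,\barepsilon_k$. The hypothesis $p_f\leq \frac{(1-\nu)^2}{16\nu^2}(1-\frac1\gamma)^2$ gives $2\nu\sqrt{p_f}\leq \frac{1-\nu}{2}(1-\frac1\gamma)$, which is exactly the required inequality, and \eqref{eq:One_Step_Rec} follows.

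The main obstacle is measurability. The random index $\barK$, the fact that $\Delta\bx_k$ depends on $\barg_k$, and the conditioning in \eqref{def:Bk}--\eqref{def:reliable_est} (stated given $\bx_k,\Delta\bx_k$) must all be handled so that the oracle bounds can be read as conditional on $\F_{k-1/2}$. I would interpret the oracle conditions as holding conditionally on $\F_{k-1/2}$, which is the standard reading in \cite{Fang2024Trust}. I would treat the event $\{k\geq\barK\}$ as in that paper, either by arguing on that event or by noting that Assumption~\ref{assump4} lets the analysis proceed pathwise after stabilization. If the case $\eqref{def:acc1}$ fails, no function estimates are needed; the bound from Lemma~\ref{lemma:ABCC'_hold} then applies with zero error contribution, so including the $e$-term only loosens the inequality.
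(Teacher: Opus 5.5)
Your proposal is correct and follows the paper's proof: it splits on $\B_k$ via Lemmas~\ref{lemma:ABCC'_hold} and \ref{lemma:Phi_(ABCC')^c}, bounds $\nu\,\mE[(e_k+e_{s_k})\boldsymbol{1}_{\B_k^c}\mid\F_{k-1}]\leq 2\nu\sqrt{p_f}\,\barepsilon_k$ by H\"older/Cauchy--Schwarz, and absorbs this term into $\frac{1-\nu}{2}(\frac{1}{\gamma}-1)\barepsilon_k$ using the condition on $p_f$. Your step of conditioning on $\F_{k-1/2}$ first and then applying the tower property is a more careful version of the paper's direct H\"older step at $\F_{k-1}$, and it does not change the argument.
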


\begin{proof}
By the law of total expectation, we have\vskip-0.5cm
\begin{multline*}
\mE[\Phi_{\barmu_{\barK},\theta_{k+1}}^{k+1}\mid\F_{k-1}]-\Phi_{\barmu_{\barK},\theta_k}^k  = \mE[(\Phi_{\barmu_{\barK},\theta_{k+1}}^{k+1}-\Phi_{\barmu_{\barK},\theta_k}^k) \boldsymbol{1}_{\B_k}\mid\F_{k-1}]\\
+\mE[(\Phi_{\barmu_{\barK},\theta_{k+1}}^{k+1}-\Phi_{\barmu_{\barK},\theta_k}^k) \boldsymbol{1}_{\B_k^c} \mid\F_{k-1}].
\end{multline*}\vskip-0.2cm
\noindent Combining the above display with the conclusions of Lemmas \ref{lemma:ABCC'_hold} and  \ref{lemma:Phi_(ABCC')^c}, and $P(\B_k\mid\F_{k-1})\geq 1-p_f$, we have\vskip-0.5cm
\begin{align*}
\mE[& \Phi_{\barmu_{\barK},\theta_{k+1}}^{k+1}\mid\F_{k-1}]-\Phi_{\barmu_{\barK},\theta_{k}}^k \\
& \leq \frac{1-\nu}{2}\left(\frac{1}{\gamma^2}-1\right)\Delta_k^2 + \frac{1-\nu}{2}\left(\frac{1}{\gamma}-1\right)\barepsilon_k + \nu\cdot \mE[(e_{s_k} +  e_k)\boldsymbol{1}_{\B_k^c} \mid\F_{k-1}] \\
& \leq \frac{1-\nu}{2}\left(\frac{1}{\gamma^2}-1\right)\Delta_k^2 + \frac{1-\nu}{2}\left(\frac{1}{\gamma}-1\right)\barepsilon_k +2 \nu\sqrt{p_f}\cdot \barepsilon_k,
\end{align*}\vskip-0.2cm
\noindent where the last inequality is due to the Hölder's inequality. Since $p_f\leq \frac{(1-\nu)^2}{16\nu^2}(1-\frac{1}{\gamma})^2$ leads to $\frac{1-\nu}{2}\left(\frac{1}{\gamma}-1\right)\barepsilon_k +2 \nu\sqrt{p_f}\barepsilon_k\leq 0$, we complete the proof.
\end{proof}

The following result follows immediately from Lemma \ref{lemma:One_Step_Rec}.

\begin{corollary}\label{coro:radius_conv_zero}
Under the conditions of Lemma \ref{lemma:One_Step_Rec}, $\lim_{k\rightarrow\infty}\Delta_k=0$ a.s.
\end{corollary}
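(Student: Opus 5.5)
The plan is a supermartingale argument on the Lyapunov function $\Phi_{\barmu_{\barK},\theta_k}^k$ for $k\ge\barK$, summing the one-step decrease from Lemma~\ref{lemma:One_Step_Rec}.

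\textbf{Lower bound on $\Phi$.} First check that the process is bounded below by a deterministic constant. Rescaling gives $\widehat{\L}_{\barmu,\theta}(\bx,\bs)\ge f_{\inf}$ for every $\theta\ge 0$. This uses $\ln(\bs^{(i)}/\Upsilon_s)\le 0$, which holds because $\bs_k^{(i)}\le\|\bs_k\|\le\Upsilon_s$ by Assumption~\ref{assump1}, and the penalty term is nonnegative. Since $\Delta_k^2\ge0$ and $\barepsilon_k>0$, we get
\[
\Phi_{\barmu_{\barK},\theta_k}^k\ge \nu f_{\inf}\quad\text{for all }k\ge\barK .
\]

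\textbf{Summing the decrease.} Lemma~\ref{lemma:One_Step_Rec} gives
\[
\mE\bigl[\Phi_{\barmu_{\barK},\theta_{k+1}}^{k+1}\mid\F_{k-1}\bigr]\le \Phi_{\barmu_{\barK},\theta_k}^k-c\,\Delta_k^2,\qquad c:=\tfrac{1-\nu}{2}\bigl(1-\gamma^{-2}\bigr)>0 .
\]
Thus $\{\Phi^k-\nu f_{\inf}\}_{k\ge\barK}$ is a nonnegative supermartingale with a summable compensator. There are two ways to finish. One is the Robbins--Siegmund theorem, which gives both that $\Phi^k$ converges a.s.\ and that $\sum_{k\ge\barK}\Delta_k^2<\infty$ a.s. The other is to telescope: take total expectations over $k=\barK,\dots,N$, bound $\sum\mE[\Delta_k^2]$ by $(\mE\Phi^{\barK}-\nu f_{\inf})/c$, and let $N\to\infty$. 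Either way $\sum_k\Delta_k^2<\infty$ a.s., so $\Delta_k\to0$ a.s. Only finitely many indices precede $\barK$, so they do not affect the limit.

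\textbf{Main obstacle.} The delicate step is that $\barK$ is random, possibly not a stopping time, and $\barmu_{\barK}$ is random as well. I would handle this by working on the events $\{\barK=K_0,\ \barmu_{\barK}=\bar\mu\}$, or more simply by defining for each fixed $K_0$ the process that uses the current $\barmu_k$ and running the argument from $K_0$ on the event where the merit parameter has stabilized. The intended reading, as in the cited trust-region SSQP works, is that Lemma~\ref{lemma:One_Step_Rec} holds pathwise for $k\ge\barK$ with $\Phi$ treated as adapted from that point on. A countable union over $K_0$ and over the possible values $\barmu_0\rho^j\le\hatmu$ then preserves the almost-sure conclusion.

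Integrability of $\Phi^{\barK}$ is needed only for the telescoping route. The Robbins--Siegmund route avoids it, because that theorem requires only that $\Phi^{\barK}$ be finite a.s. This holds since $\bx_k$ and $\bs_k$ lie in $\Omega$ with $\bs_k>0$, and $\Delta_k$ and $\barepsilon_k$ are finite.
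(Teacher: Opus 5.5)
Your proposal is correct and is essentially the paper's argument. The paper conditions on $\F_{\barK-1}$ rather than taking total expectations, which plays the same role as your Robbins--Siegmund variant in avoiding an integrability requirement on $\Phi^{\barK}$; it then sums the decrease from Lemma~\ref{lemma:One_Step_Rec} against the lower bound $\nu f_{\inf}$ and uses Tonelli to get $\sum_{k\ge\barK}\Delta_k^2<\infty$ a.s., treating $\barK$ as fixed exactly as in your ``intended reading.''

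To make your countable-union repair airtight, do not condition on the non-adapted event $\{\barK=K_0,\barmu_{\barK}=\bar\mu\}$. Instead, for each fixed $(K_0,\bar\mu)$, freeze the Lyapunov process computed with $\bar\mu$ at the first $k\ge K_0$ with $\barmu_k\neq\bar\mu$. This is a stopping time for $\{\F_{k-1/2}\}$ and preserves the supermartingale inequality.
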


\begin{proof}
Taking the expectation conditional on $\F_{\barK-1}$ on both sides of \eqref{eq:One_Step_Rec}, we have\vskip-0.5cm
\begin{equation*}
\mE[\Phi^{k+1}_{\barmu_{\barK},\theta_{k+1}}-\Phi_{\barmu_{\barK},\theta_k}^k\mid\F_{\barK-1}] \leq \frac{1-\nu}{2}\left(\frac{1}{\gamma^2}-1\right)\mE[\Delta_k^2\mid\F_{\barK-1}].
\end{equation*}\vskip-0.2cm
\noindent Summing over $k\geq\barK$, noting that $\mE[\Phi^{k}_{\barmu_{\barK},\theta_{k}}\mid\F_{\barK-1}]$ is monotonically decreasing and bounded below by $\nu \cdot f_{\inf}$ (cf. Assumption \ref{assump1} and the definition of $\Phi^{k}_{\barmu_{\barK},\theta_{k}}$), we have\vskip-0.5cm
\begin{equation*}
-\infty < \sum_{k=\barK}^\infty \mE[\Phi^{k+1}_{\barmu_{\barK},\theta_{k+1}}-\Phi_{\barmu_{\barK},\theta_{k}}^k\mid\F_{\barK-1}]  \leq \frac{1-\nu}{2}\left(\frac{1}{\gamma^2}-1\right) \sum_{k=\barK}^\infty \mE[\Delta_k^2\mid\F_{\barK-1}].
\end{equation*}\vskip-0.2cm
\noindent Since $\Delta_k\geq 0$, by Tonelli's Theorem, we have $\mE[\sum_{k=\barK}^\infty \Delta_k^2\mid\F_{\barK-1}] < \infty$, which implies $P[\sum_{k=\barK}^{\infty}\Delta_k^2<\infty\mid\F_{\barK-1}]=1$. Since the conclusion holds for an arbitrarily given $\barK$, we have $P[\sum_{k=\barK}^{\infty}\Delta_k^2<\infty]=1$, which implies that $\lim_{k\rightarrow\infty}\Delta_k=0$ almost surely.
\end{proof}

\subsection{Global convergence}

We now establish the global convergence guarantee of Algorithm \ref{Alg:STORM}. We first prove a liminf-type almost-sure convergence for the first-order stationarity. Then, we show that a subsequence of iterates~converges to a KKT point of Problem \eqref{Intro_StoProb}.

\begin{theorem}\label{thm:liminf_result}
Suppose that the conditions of Lemma \ref{lemma:One_Step_Rec} hold and $p_g+p_f< 1/2$. Then we have $\liminf_{k\rightarrow\infty}\|\nabla\L_{\theta_k}\|=0$ a.s. Moreover, for any subsequence $\|\nabla\L_{\theta_{k_l}}\| \rightarrow  0$ as~$l\rightarrow\infty$, any limit point $\bx_*$ of $\{\bx_{k_l}\}_l$ that satisfies the linear independence constraint qualification (LICQ) for Problem \eqref{Intro_StoProb} (that is, the set of gradients $\{\nabla c^{(i)}(\bx_*)\}_{i=1}^m\cup\{\nabla h^{(i)}(\bx_*)\}_{i\in \bar{\mathcal{A}}}$ is linearly independent, with $\bar{\mathcal{A}} \coloneqq \{ i\in [n]: h^{(i)}(\bx_*) = 0 \}$ being the index set of active inequality constraints) is a KKT point of Problem \eqref{Intro_StoProb}. In particular, there exists~a~pair of Lagrange multipliers $(\blambda_*,\btau_*)\in \mathbb{R}^m\times \mathbb{R}^n$ such that $(\bx_*,\blambda_*,\btau_*)$ satisfies \eqref{KKT1}.
\end{theorem}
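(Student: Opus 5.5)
Since $\|\nabla\L_{\theta_k}\|\le\Upsilon_L\|Q_k\|$, it suffices to prove $\liminf_k\|Q_k\|=0$ almost surely. I will argue by contradiction in the style of STORM-type trust-region analyses. Suppose that on an event of positive probability there are $\epsilon>0$ and $K_0\ge\barK$ with $\|Q_k\|\ge\epsilon$ for all $k\ge K_0$. Set
\[
\bar\Delta:=\min\Big\{\tfrac{(1-\eta)\kappa_{fcd}\epsilon}{4\kappa_f+8\Upsilon_1+\kappa_g\kappa_{fcd}(1-\eta)},\ \tfrac{\epsilon}{\kappa_g+\kappa_B+\theta_0},\ \epsilon_s\Big\}.
\]
By Lemma~\ref{lemma:guarantee_succ_step_KKT}, whenever $\Delta_k\le\bar\Delta$ and $\A_k\cap\B_k$ holds, iteration $k$ is successful, so $\Delta_{k+1}=\min\{\gamma\Delta_k,\Delta_{\max}\}$. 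Otherwise $\Delta_{k+1}\ge\Delta_k/\gamma$ always.

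Next I track $W_k:=\log_\gamma(\Delta_k/\bar\Delta')$ for a threshold $\bar\Delta'\le\bar\Delta$ chosen as a power-of-$\gamma$ multiple of $\Delta_0$, so that $W_k$ is integer-valued. While $\Delta_k<\bar\Delta'$, the walk $W_k$ moves up by one with conditional probability at least $P(\A_k\cap\B_k\mid\F_{k-1})\ge 1-p_g-p_f>1/2$. To justify this bound, I will use $P(\A_k\mid\F_{k-1})\ge1-p_g$ and $P(\B_k\mid\F_{k-1/2})\ge1-p_f$ (from Definition~\ref{def:Probabilistic zeroth-order oracle}, since $\Delta\bx_k$ is $\F_{k-1/2}$-measurable) together with a union bound. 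In every other case $W_k$ moves down by at most one.

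Compare $W_k$ with a biased random walk $Y_k$ with up-probability $1-p_g-p_f>1/2$, coupled so that $W_k\ge Y_k$ whenever $W_k<0$. Because $p_g+p_f<1/2$ the drift is positive, so by the standard submartingale/recurrence argument, $\{\Delta_k\ge\bar\Delta'\}$ occurs infinitely often with probability one on the event in question. This contradicts Corollary~\ref{coro:radius_conv_zero}, which gives $\Delta_k\to0$ a.s. Hence $\liminf\|Q_k\|=0$ a.s., and so $\liminf\|\nabla\L_{\theta_k}\|=0$. The main obstacle is making this random-walk comparison rigorous, because $\epsilon$ and $K_0$ are random. I will handle it by fixing rational $\epsilon$ and integer $K_0$, proving the contradiction for each fixed pair, and then taking a countable union. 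The coupling must also account for the cap at $\Delta_{\max}$, which is harmless because the argument only concerns behavior below $\bar\Delta'$.

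For the second claim, take a subsequence with $\|\nabla\L_{\theta_{k_l}}\|\to0$ and a limit point $\bx_*$ satisfying LICQ, and pass to a further subsequence with $\bx_{k_l}\to\bx_*$. Since $\|\bs_k\|\le\Upsilon_s$, I can also assume $\bs_{k_l}\to\bs_*\ge0$. The residual components of \eqref{1st_order_point} give $c(\bx_*)=0$ and $h(\bx_*)+\bs_*=0$, hence $h(\bx_*)\le0$. The multipliers satisfy $\btau_{k_l}=\theta_{k_l}S_{k_l}^{-1}\boldsymbol e\ge0$. For each inactive $i$ we have $\bs_*^{(i)}=-h^{(i)}(\bx_*)>0$, so $\btau_{k_l}^{(i)}\to0$ because $\theta_k\to0$; this gives complementarity.

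It remains to show that $(\blambda_{k_l},\btau_{k_l})$ is bounded. The stationarity residual $\nabla f+G^T\blambda+J^T\btau\to0$ holds, and the inactive $\btau$ components vanish. If the remaining multipliers were unbounded, normalizing them and passing to the limit would produce a nontrivial vanishing combination of $\{\nabla c^{(i)}(\bx_*)\}\cup\{\nabla h^{(i)}(\bx_*)\}_{i\in\bar{\mathcal A}}$, contradicting LICQ. Therefore the multipliers are bounded, and any limit $(\blambda_*,\btau_*)$ satisfies \eqref{KKT1}, using continuity of $\nabla f$, $G$ and $J$.
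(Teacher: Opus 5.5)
Your proposal is correct and follows essentially the same route as the paper: contradiction on $\|Q_k\|\ge\epsilon$, Lemma~\ref{lemma:guarantee_succ_step_KKT} together with $\Delta_k\to 0$ from Corollary~\ref{coro:radius_conv_zero}, a pathwise comparison of $\log_\gamma\Delta_k$ with a walk of positive drift, and an LICQ normalization argument for the multipliers. The paper avoids the coupling you flag as the main obstacle by comparing directly with $w_k=\sum_{i<k}(2\cdot\boldsymbol{1}_{\A_i\cap\B_i}-1)$, which tends to $+\infty$ a.s.\ by the martingale strong law (Hall--Heyde) together with $1-2(p_g+p_f)>0$; it also proves, rather than cites, the bound $\|\nabla\L_{\theta_k}\|\le\Upsilon_L\|Q_k\|$, with $\Upsilon_L=1+\sqrt{\kappa_{2,A}/\kappa_{1,A}}$.
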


\begin{proof}
We first prove $\liminf_{k\rightarrow\infty}\|Q_k\|=0$ almost surely.
We note that  $\tilde{w}_k = \sum_{i=0}^{k-1}(\boldsymbol{1}_{(\A_i\cap\B_i)} - \mE[\boldsymbol{1}_{(\A_i\cap\B_i)} | \mF_{i-1}])$ is a martingale since\vskip-0.5cm
\begin{equation*}
\mE[\tilde{w}_{k+1}|\mF_{k-1}] = \tilde{w}_k + \mE[\boldsymbol{1}_{(\A_k\cap\B_k)} | \mF_{k-1}] - \mE[\boldsymbol{1}_{(\A_k\cap\B_k)} | \mF_{k-1}] = \tilde{w}_k.
\end{equation*}\vskip-0.15cm
\noindent Using the fact that $\boldsymbol{1}_{(\A_k\cap\B_k)}\leq 1$ and \cite[Theorem 2.19]{Hall2014Martingale}, we know $\tilde{w}_k/k\rightarrow 0$ a.s. Define $w_k = \sum_{i=0}^{k-1}(2\cdot\boldsymbol{1}_{(\A_i\cap\B_i)}-1)$, then since $\mE[\boldsymbol{1}_{(\A_i\cap\B_i)} | \mF_{i-1}] \geq 1-p_g-p_f$,~we~have\vskip-0.5cm
\begin{equation*}
\frac{w_k}{k} = \frac{2\tilde{w}_k}{k} + \frac{1}{k}\sum_{i=0}^{k-1}(2\mE[\boldsymbol{1}_{(\A_i\cap\B_i)} | \mF_{i-1}]-1) \geq  \frac{2\tilde{w}_k}{k} + 1 - 2(p_g+p_f).
\end{equation*}\vskip-0.2cm
\noindent As  $p_g+p_f< 0.5$, we know from the above display that $w_k\rightarrow \infty$ almost surely. Next, we prove $\liminf_{k\rightarrow\infty}\|Q_k\|=0$ by contradiction.

Suppose there exist $\epsilon_1>0$ and $K_1 \geq \barK$ such that for all $k\geq K_1$, $\|Q_k\| \geq \epsilon_1$. Since $\Delta_k\rightarrow 0$~by~Corollary~\ref{coro:radius_conv_zero}, there exists $K_1'\geq K_1$ such that for all $k\geq K_1'$, $\Delta_k \leq a\coloneqq \min\{\frac{\Delta_{\max}}{\gamma},\varphi\epsilon_1,\epsilon_s\}$ with $\varphi \coloneqq \min\{\frac{(1-\eta)\kappa_{fcd}}{4\kappa_f+8\Upsilon_1+(1-\eta)\kappa_g\kappa_{fcd}},\frac{1}{\kappa_g+\kappa_B+\theta_0} \}$.
Therefore, for all $k\geq K_1'$, we have $\Delta_k \leq \min\{\varphi\|Q_k\|,\epsilon_s\}$, which combined with Lemma \ref{lemma:guarantee_succ_step_KKT} shows that if $\A_k\cap\B_k$ holds, the iteration must be successful. Since $\Delta_k\leq\Delta_{\max}/\gamma$, we have $\Delta_{k+1}=\gamma\Delta_k$. On the other hand, if $(\A_k\cap\B_k)^c$ holds, the iteration can be~successful~or~not,~in which case we have $\Delta_{k+1} \geq \Delta_k/ \gamma$. Let $b_k=\log_\gamma\left(\frac{\Delta_k}{a}\right)$, which satisfies $b_k\leq 0$ for all $k\geq K_1'$. By this definition, we know for $k\geq K_1'$, if $\A_k\cap\B_k$ holds, then $b_{k+1}=b_k+1$; otherwise, $b_{k+1} \geq b_k-1$. Comparing this jumping process with $\{w_k\}$, we know $b_k-b_{K_1'} \geq w_k-w_{K_1'}$ for all $k\geq K_1'$. Thus, $b_k\rightarrow\infty$, which contradicts $b_k\leq 0$ for all $k\geq K_1'$. This contradiction~concludes~$\liminf_{k\rightarrow\infty}\|Q_k\|=0$.

To prove $\liminf_{k\rightarrow\infty}\|\nabla\L_{\theta_k}\|=0$ almost surely, it suffices to show $\|\nabla\L_{\theta_k}\| \leq \Upsilon_L\|Q_k\|$ for some constant $\Upsilon_L$. Note that the definition of $\btau_k = \theta_kS_k^{-1}\boldsymbol{e}$ (cf. \eqref{def:true_Lag_para}) implies $-\theta_k S_k^{-1}\boldsymbol{e} + \btau_k=0$. Since
\begin{equation}
\nabla\L_{\theta_k} = \begin{pmatrix}
\nabla f_k + G_k^T\blambda_k + J_k^T\btau_k \\
-\theta_k S_k^{-1}\boldsymbol{e} + \btau_k \\
c_k\\h_k+\bs_k
\end{pmatrix},\quad\quad\quad  Q_k = \begin{pmatrix}
P_k\psi_k\\
c_k\\h_k+\bs_k
\end{pmatrix}, 
\end{equation}
it suffices to show that $\| \nabla f_k + G_k^T\blambda_k + J_k^T\btau_k \| \leq \Upsilon_L \|P_k\psi_k \|$ for some $\Upsilon_L>1$. 

With the definitions in \eqref{def:true_Lag_para}, if defining $P_{G,k} = I - G_k^T[G_kG_k^T]^{-1}G_k$, we have $\nabla f_k + G_k^T\blambda_k + J_k^T\btau_k = (P_{G,k}\;\; -P_{G,k} J_k^TS_k^{-1})\psi_k$. Since $(P_{G,k}\;\; -P_{G,k} J_k^TS_k^{-1})^T$ is a basis of the null space of $A_k$, we know $\nabla f_k + G_k^T\blambda_k + J_k^T\btau_k = (P_{G,k} \;\; -P_{G,k} J_k^TS_k^{-1})P_k \psi_k$,~where we use the fact that $P_k = I - A_k^T[A_kA_k^T]^{-1}A_k$ denotes the orthogonal projection matrix to the null space of $A_k$. By the relation $P_k = Z_kZ_k^T$, we further have $\nabla f_k + G_k^T\blambda_k + J_k^T\btau_k = (P_{G,k} \;\; -P_{G,k} J_k^TS_k^{-1})Z_k Z_k^T \psi_k$, which~leads~to\vskip-0.5cm
\begin{align*}
\| \nabla f_k  + G_k^T\blambda_k + J_k^T\btau_k \| & \leq \| \begin{pmatrix}
P_{G,k} & -P_{G,k} J_k^TS_k^{-1}  
\end{pmatrix}\| \| Z_k \|  \|Z_k^T \psi_k\| \\
& \leq (\|P_{G,k}\| + \|P_{G,k} J_k^TS_k^{-1}\| ) \|Z_k^T \psi_k \| \\
& \stackrel{\mathclap{\eqref{eq:kappa_G_J}}}{\leq} \left(1 + \sqrt{\frac{\kappa_{2,A}} {\kappa_{1,A}}} \right)\|Z_k^T \psi_k \| = \left(1 + \sqrt{\frac{\kappa_{2,A}} {\kappa_{1,A}}} \right)\|P_k\psi_k \|,
\end{align*}\vskip-0.1cm
\noindent where the last equality follows from $Z_kZ_k^T=P_k=P_k^2$. This result, combined with $\liminf_{k\rightarrow\infty} \|Q_k\| = 0$, implies that $\liminf_{k\rightarrow\infty} \|\nabla \L_{\theta_k}\| = 0$ almost surely.

Now consider the subsequence indexed by $k_l$ with infinite cardinality described in the theorem. With the definition $\btau_{k_l} =\theta_{k_l}S_{k_l}^{-1}\boldsymbol{e}$, it follows that $\btau_{k_l} \geq 0$ for all $k_l$. Let $\bar{\mathcal{A}} \coloneqq \{ i\in [n]: h^{(i)}(\bx_*) = 0 \}$. Since $S_{k_l} \btau_{k_l} =  \theta_{k_l} \boldsymbol{e}$ for all $k_l $ and $\theta_{k_l} \rightarrow 0$,~we~have $\btau_{k_l}^{(i)} \rightarrow 0$ for all $i\notin \bar{\mathcal{A}} $. Combining this result with $\|\nabla\L_{\theta_{k_l}}\| \rightarrow  0$ yields~that $  \big\| \nabla f_{k_l} + G_{k_l}^T\blambda_{k_l} + \sum_{i\in \bar{\mathcal{A}}}\nabla h^{(i)}_{k_l}\btau_{k_l}^{(i)} \big\| \rightarrow 0$. Let $\bx_*$ be any limit point of $\{\bx_{k_l}\}_l$. Then there exists a subsequence $\bx_{k_{l_{j}}} \rightarrow \bx_*$ as $j\rightarrow\infty$ with $  \big\| \nabla f_{k_{l_{j}}} + G_{k_{l_{j}}}^T\blambda_{k_{l_{j}}} + \sum_{i\in \bar{\mathcal{A}}}\nabla h^{(i)}_{k_{l_{j}}}\btau_{k_{l_{j}}}^{(i)} \big\| \rightarrow 0$. This indicates that $\bx_*$ is a KKT point of Problem \eqref{Intro_StoProb} under LICQ. Note that $\{\bx_{k_{l_{j}}}\}_j$ is bounded, implying $(\blambda_{k_{l_{j}}}, \btau_{k_{l_{j}}})$ are bounded under LICQ. Consequently, $(\blambda_{k_{l_{j}}},\btau_{k_{l_{j}}})$~admit limit points, denoted by $\blambda_*$ and $\btau_*$, respectively, such~that~$(\bx_*,\blambda_*,\btau_*)$~satisfies~\eqref{KKT1}.
\end{proof}

\subsection{Merit parameter behavior}
\label{subsec:merit_para}

In this section, we re-examine Assumption \ref{assump4} and demonstrate that it is satisfied provided $\barg_k$ is bounded.
The assumption that $\barg_k$~is bounded is standard in the SSQP literature for constrained optimization; see, e.g.,  \cite{Berahas2021Stochastic,Berahas2021Sequential,Curtis2021Inexact,Na2022adaptive,Na2021Inequality,Fang2024Fully,Fang2024Trust}. It is satisfied, for example, when sampling from an empirical~distribution, as is common in many machine learning problems.

\begin{assumption}\label{ass:bdd_err}
For all $k \geq 0$, there exists $M>0$, such that $\|\nabla f_k-\barg_k\|\leq M$.
\end{assumption}

\begin{lemma}\label{lemma: mu_stabilize}
Suppose that Assumptions \ref{assump1} and \ref{ass:bdd_err} hold. Then \eqref{eq:threshold_Predk} can be satisfied. Furthermore, there exist a (potentially stochastic) $\barK<\infty$ and a deterministic constant $\hat{\mu}$, such that for $\forall k \geq \barK$, $\barmu_k=\barmu_{\barK}\leq \hatmu$.
\end{lemma}

\begin{proof} 
It suffices to show that \eqref{eq:threshold_Predk} is satisfied if $\barmu_k$ is larger than a deterministic threshold independent of $k$. Since $\|\vartheta_k+A_k\tilde{\bd}_k\|-\|\vartheta_k\|=-\bargamma_k\|\vartheta_k\|$, we have\vskip-0.5cm
\begin{align*}
\text{Pred}_k & = \barpsi_k^T\tilde{\bd}_k + \frac{1}{2}\tilde{\bd}_k^T\bar{W}_k\tilde{\bd}_k+\barmu_k(\|\vartheta_k+A_k\tilde{\bd}_k\|-\|\vartheta_k\|) \notag\\
& = (\barpsi_k+\bargamma_k\bar{W}_k\bv_k)^T\tilde{\bt}_k + \frac{1}{2}\tilde{\bt}_k^T\bar{W}_k\tilde{\bt}_k+\bargamma_k\barpsi_k^T\bv_k +\frac{1}{2}\bargamma_k^2\bv_k^T\bar{W}_k\bv_k-\barmu_k\bargamma_k\|\vartheta_k\| \notag\\
& \leq -\frac{\kappa_{fcd}}{2}\|Z_k^T(\barpsi_k+\bargamma_k\bar{W}_k\bv_k)\|\min\left\{\hat{\Delta}_k,\epsilon_s - \|\tilde{\bw}_k^s\|,\frac{\|Z_k^T(\barpsi_k+\bargamma_k\bar{W}_k\bv_k)\|}{\|Z_k^T\bar{W}_kZ_k\|}\right\} \\
& \quad +\bargamma_k\|\barpsi_k\| \|\bv_k\| +\frac{1}{2}\bargamma_k\|\bar{W}_k\|\|\bv_k\|^2 -\barmu_k\bargamma_k\|\vartheta_k\|,
\end{align*}\vskip-0.1cm
\noindent where we have used \eqref{eq:cauchy1} and $\bargamma_k\leq 1$.
Since $\|Z_k^T(\barpsi_k+\bargamma_k\bar{W}_k\bv_k)\|\geq \|Z_k^T\barpsi_k\|- \bargamma_k\|\bar{W}_k\|\|\bv_k\|$, $\|\tilde{\bw}_k^s\|\leq \|\bw_k\|=\bargamma_k\|\bv_k\|$, and $\|Z_k\|\leq 1$, we have\vskip-0.5cm
\begin{align*}\label{eq:pred_part2}
& \|Z_k^T( \barpsi_k +\bargamma_k\bar{W}_k\bv_k)\|\min\left\{\hat{\Delta}_k,\epsilon_s - \|\tilde{\bw}_k^s\|,\frac{\|Z_k^T(\barpsi_k+\bargamma_k\bar{W}_k\bv_k)\|}{\|Z_k^T\bar{W}_kZ_k\|}\right\} \notag \\
& \geq \|Z_k^T\barpsi_k\|\min\left\{\hat{\Delta}_k,\epsilon_s, \frac{\|Z_k^T\barpsi_k\|}{\|\bar{W}_k\|}\right\}-2\bargamma_k\|Z_k^T\barpsi_k\|\|\bv_k\|-\bargamma_k\|\bar{W}_k\|\|\bv_k\|\hat{\Delta}_k\\
& \geq \|Z_k^T\barpsi_k\|\min\left\{\Delta_k-\bargamma_k\|\bv_k\|,\epsilon_s, \frac{\|Z_k^T\barpsi_k\|}{\|\bar{W}_k\|}\right\} - 2\bargamma_k\|Z_k^T\barpsi_k\|\|\bv_k\|-\bargamma_k\|\bar{W}_k\|\|\bv_k\|\hat{\Delta}_k\\
& \geq \|Z_k^T\barpsi_k\|\min\left\{\Delta_k,\epsilon_s, \frac{\|Z_k^T\barpsi_k\|}{\|\bar{W}_k\|}\right\} - 3\bargamma_k\|Z_k^T\barpsi_k\|\|\bv_k\|-\bargamma_k\|\bar{W}_k\|\|\bv_k\|\hat{\Delta}_k,
\end{align*}\vskip-0.1cm
\noindent where we use the relation $\Delta_k-\bargamma_k\|\bv_k\|\leq \hat{\Delta}_k$ in the second inequality. Using the facts that $\|\bv_k\|\leq\frac{1}{\sqrt{\kappa_{1,A}}}\|\vartheta_k\|\leq \frac{1}{\sqrt{\kappa_{1,A}}}(\kappa_c+\kappa_h+\Upsilon_s)$ (Assumption \ref{assump1}), $\|Z_k^T\barpsi_k\|\leq\|\barpsi_k\|$, $ \hat{\Delta}_k\leq\Delta_{\max}$, and $\kappa_{fcd}\leq 1$, the above two displays lead to \vskip-0.5cm 
\begin{align*}
\text{Pred}_k & \leq -\frac{\kappa_{fcd}}{2} \|Z_k^T\barpsi_k\|\min\left\{\Delta_k,\epsilon_s, \frac{\|Z_k^T\barpsi_k\|}{\|\bar{W}_k\|}\right\} + \frac{3}{2}\bargamma_k\|Z_k^T\barpsi_k\|\|\bv_k\| +\bargamma_k\|\barpsi_k\| \|\bv_k\|  \\
& \quad + \frac{1}{2}\bargamma_k\|\bar{W}_k\|\|\bv_k\|\hat{\Delta}_k +\frac{1}{2}\bargamma_k\|\bar{W}_k\|\|\bv_k\|^2 -\barmu_k\bargamma_k\|\vartheta_k\| \\
& \leq -\frac{\kappa_{fcd}}{2} \|Z_k^T\barpsi_k\|\min\left\{\Delta_k,\epsilon_s, \frac{\|Z_k^T\barpsi_k\|}{\|\bar{W}_k\|}\right\} + 2.5 \bargamma_k\|\barpsi_k\|\|\bv_k\| \\
& \quad + \left(\frac{1}{2}\hat{\Delta}_k+\frac{1}{2\sqrt{\kappa_{1,A}}}\|\vartheta_k\|\right)\bargamma_k\|\bar{W}_k\|\|\bv_k\| -\barmu_k\bargamma_k\|\vartheta_k\| \\
& \leq -\frac{\kappa_{fcd}}{2}\|Z_k^T\barpsi_k\|\min\left\{\Delta_k,\epsilon_s,\frac{\|Z_k^T\barpsi_k\|}{\|\bar{W}_k\|}\right\}+\frac{2.5}{\sqrt{\kappa_{1,A}}}\bargamma_k\|\barpsi_k\|\|\vartheta_k\|\\
&\quad +\left(\frac{\Delta_{\max}}{2\sqrt{\kappa_{1,A}}}+\frac{\kappa_c+\kappa_h+\Upsilon_s}{2\kappa_{1,A}}\right)\bargamma_k\|\bar{W}_k\|\|\vartheta_k\| -\barmu_k\bargamma_k\|\vartheta_k\|.    
\end{align*}\vskip-0.15cm
\noindent By Assumption \ref{ass:bdd_err}, we have $\|\bar{W}_k\|\leq \kappa_B + \theta_0$ and $\|\barpsi_k\|\leq\|\nabla f_k\|+\|\nabla f_k-\barg_k\| + \|\theta_k\boldsymbol{e}\| \leq   M +\kappa_{\nabla f} + \theta_0\sqrt{n}$. Therefore,\vskip-0.5cm
\begin{equation}\label{eq:App1}
\text{Pred}_k \leq -\frac{\kappa_{fcd}}{2}\|Z_k^T\barpsi_k\|\min\left\{\Delta_k,\epsilon_s,\frac{\|Z_k^T\barpsi_k\|}{\|\bar{W}_k\|}\right\}+\Upsilon_{3}\bargamma_k\|\vartheta_k\| -\barmu_k\bargamma_k\|\vartheta_k\|,    
\end{equation}\vskip-0.15cm
\noindent where $\Upsilon_{3}=\left(\frac{\Delta_{\max}}{2\sqrt{\kappa_{1,A}}}+\frac{\kappa_c+\kappa_h+\Upsilon_s}{2\kappa_{1,A}}\right)(\kappa_B + \theta_0)+\frac{2.5(M +\kappa_{\nabla f} + \theta_0\sqrt{n})}{\sqrt{\kappa_{1,A}}}$. We discuss two cases.

\noindent $\bullet$ \textbf{Case 1.} $\min\{\Delta_k,\epsilon_s\}\leq \|Z_k^T\barpsi_k\|/\|\bar{W}_k\|$. In this case, the above inequality gives us 
\begin{align*}
\text{Pred}_k & \leq -\frac{\kappa_{fcd}}{2}\|Z_k^T\barpsi_k\|\min\left\{\Delta_k,\epsilon_s\right\} - \frac{\kappa_{fcd}}{2}\|\vartheta_k\|\min\left\{\Delta_k,\epsilon_s\right\}\\&\quad  + \frac{\kappa_{fcd}}{2}\|\vartheta_k\|\min\left\{\Delta_k,\epsilon_s\right\}
+\Upsilon_3\bargamma_k\|\vartheta_k\| -\barmu_k\bargamma_k\|\vartheta_k\|\\
& \leq -\frac{\kappa_{fcd}}{2}\|\bar{Q}_k\|\min\left\{\Delta_k,\epsilon_s\right\} + \frac{\kappa_{fcd}}{2}\|\vartheta_k\|\min\left\{\Delta_k,\epsilon_s\right\}
+\Upsilon_3\bargamma_k\|\vartheta_k\| -\barmu_k\bargamma_k\|\vartheta_k\|\\
& \leq -\frac{\kappa_{fcd}}{2}\|\bar{Q}_k\|\min\left\{\Delta_k,\epsilon_s,\frac{\|\bar{Q}_k\|}{\|\bar{W}_k\|}\right\} + \frac{\|\vartheta_k\|}{2}\min\left\{\Delta_k,\epsilon_s\right\}
+\Upsilon_3\bargamma_k\|\vartheta_k\| -\barmu_k\bargamma_k\|\vartheta_k\|,    
\end{align*}
since $\|Z_k^T\barpsi_k\| + \|\vartheta_k\| \geq \|\bar{Q}_k\|$ and $\kappa_{fcd}\leq 1$. Therefore, \eqref{eq:threshold_Predk} holds provided $\barmu_k\bargamma_k\|\vartheta_k\|\geq 0.5\|\vartheta_k\| \min\left\{\Delta_k,\epsilon_s\right\}
+\Upsilon_3\bargamma_k\|\vartheta_k\|$, equivalently, $\barmu_k\geq \frac{\min\left\{\Delta_k,\epsilon_s\right\}}{2\bargamma_k} +\Upsilon_3$. By the definition of $\bargamma_k$ in \eqref{eq:Sto_gamma_k}, if $\bargamma_k=1$, it suffices that $\barmu_k \geq0.5\min\{\Delta_{\max},\epsilon_s\} + \Upsilon_3$. While if  $\bargamma_k = \min\{\frac{\zeta\epsilon_s}{\|\tilde{\bv}_k^s\|},
\frac{\zeta\Delta_k}{\|\bv_k\|}\}$, since $\|\tilde{\bv}_k^s\| \leq \|\bv_k\|$, we have $\bargamma_k \geq \min\{\frac{\zeta\epsilon_s}{\|\bv_k\|},
\frac{\zeta\Delta_k}{\|\bv_k\|}\} = \zeta \frac{\min\{\epsilon_s,\Delta_k\}}{\|\bv_k\|}$. Thus, it suffices that $\barmu_k \geq \frac{\|\bv_k\|}{2\zeta} +\Upsilon_3$. Noting that $\|\bv_k\|\leq \frac{1}{\sqrt{\kappa_{1,A}}}\|\vartheta_k\|\leq \frac{\kappa_c+\kappa_h+\Upsilon_s}{\sqrt{\kappa_{1,A}}}$, we further strengthen the condition to $\barmu_k \geq \frac{\kappa_c+\kappa_h+\Upsilon_s}{2\zeta\sqrt{\kappa_{1,A}}} +\Upsilon_3$.~Combining the above two cases, it is sufficient to have $\barmu_k \geq 0.5\min\{\Delta_{\max},\epsilon_s\}+\frac{\kappa_c+\kappa_h+\Upsilon_s}{2\zeta\sqrt{\kappa_{1,A}}}+\Upsilon_3$.

\noindent $\bullet$ \textbf{Case 2.} $\min\{\Delta_k,\epsilon_s\} > \|Z_k^T\barpsi_k\|/\|\bar{W}_k\|$. In this case, \eqref{eq:App1} leads to\vskip-0.5cm
\begin{align*}
\text{Pred}_k & \leq -\frac{\kappa_{fcd}}{2}\frac{\|Z_k^T\barpsi_k\|^2}{\|\bar{W}_k\|}+\Upsilon_3\bargamma_k\|\vartheta_k\| -\barmu_k\bargamma_k\|\vartheta_k\|\\
& = -\frac{\kappa_{fcd}\|Z_k^T\barpsi_k\|(\|Z_k^T\barpsi_k\| + \|\vartheta_k\|)}{2\|\bar{W}_k\|} + \frac{ \kappa_{fcd}\|Z_k^T\barpsi_k\| \|\vartheta_k\|}{2\|\bar{W}_k\|}+\Upsilon_3\bargamma_k\|\vartheta_k\| -\barmu_k\bargamma_k\|\vartheta_k\|.
\end{align*}
Rearranging the terms and using $\|Z_k^T\barpsi_k\| + \|\vartheta_k\| \geq \|\bar{Q}_k\|$ and $\kappa_{fcd}\leq 1$, we have
\begin{align*}
\text{Pred}_k    
& \leq -\frac{\kappa_{fcd}}{2}\frac{\|Z_k^T\barpsi_k\| \|\bar{Q}_k\|}{\|\bar{W}_k\|} + \frac{ \|Z_k^T\barpsi_k\| \|\vartheta_k\|}{2\|\bar{W}_k\|} -\frac{\kappa_{fcd}}{2}\|\vartheta_k\|\min\{\Delta_k,\epsilon_s\}\\
& \quad + \frac{\kappa_{fcd}}{2}\|\vartheta_k\|\min\{\Delta_k,\epsilon_s\}+\Upsilon_3\bargamma_k\|\vartheta_k\| -\barmu_k\bargamma_k\|\vartheta_k\|\\
& \leq -\frac{\kappa_{fcd}}{2}\|Z_k^T\barpsi_k\|\min\left\{\Delta_k,\epsilon_s,\frac{\|\bar{Q}_k\|}{\|\bar{W}_k\|}\right\} -\frac{\kappa_{fcd}}{2}\|\vartheta_k\|\min\left\{\Delta_k,\epsilon_s,\frac{\|\bar{Q}_k\|}{\|\bar{W}_k\|}\right\}\\
& \quad  + \frac{ \|Z_k^T\barpsi_k\|\|\vartheta_k\|}{2 \|\bar{W}_k\|} + \frac{1}{2}\|\vartheta_k\|\min\{\Delta_k,\epsilon_s\} + \Upsilon_3\bargamma_k\|\vartheta_k\| -\barmu_k\bargamma_k\|\vartheta_k\|\\
& \leq -\frac{\kappa_{fcd}}{2}\|\bar{Q}_k\|\min\left\{\Delta_k,\epsilon_s,\frac{\|\bar{Q}_k\|}{\|\bar{W}_k\|}\right\} \\
& \quad  + \frac{ \|Z_k^T\barpsi_k\| \|\vartheta_k\|}{2 \|\bar{W}_k\|} + \frac{1}{2}\|\vartheta_k\|\min\{\Delta_k,\epsilon_s\}+\Upsilon_3\bargamma_k\|\vartheta_k\| -\barmu_k\bargamma_k\|\vartheta_k\|.     
\end{align*}
Therefore, we only need $\barmu_k\bargamma_k\|\vartheta_k\| \geq \frac{ \|Z_k^T\barpsi_k\| \|\vartheta_k\|}{2 \|\bar{W}_k\|} + \frac{1}{2}\|\vartheta_k\|\min\{\Delta_k,\epsilon_s\}+\Upsilon_3\bargamma_k\|\vartheta_k\|$,
equivalently, $\barmu_k \geq \frac{ \|Z_k^T\barpsi_k\|}{2\bargamma_k\|\bar{W}_k\|} + \frac{\min\{\Delta_k,\epsilon_s\}}{2\bargamma_k} + \Upsilon_3$.
Since $\min\{\Delta_k,\epsilon_s\} > \|Z_k^T\barpsi_k\|/\|\bar{W}_k\|$ in this case, it suffices to have
$\barmu_k \geq \frac{\min\{\Delta_k,\epsilon_s\}}{\bargamma_k}+\Upsilon_3$. As in \textbf{Case 1}, this holds provided~$\barmu_k \geq \min\{\Delta_{\max},\epsilon_s\}+ \frac{\kappa_c+\kappa_h+\Upsilon_s}{\zeta\sqrt{\kappa_{1,A}}}+\Upsilon_3$.

Combining the conclusions in \textbf{Cases 1} and \textbf{2}, we know that there exists $\barmu = \min\{\Delta_{\max},\epsilon_s\}+ \frac{\kappa_c+\kappa_h+\Upsilon_s}{\zeta\sqrt{\kappa_{1,A}}}+\Upsilon_3$ such that \eqref{eq:threshold_Predk} holds as long as $\barmu_k\geq \barmu$. By the updating scheme of the merit parameter, we can set $\hat{\mu} = \rho\barmu$ and this completes the proof.
\end{proof}

\section{Numerical Experiments}\label{sec:5}

In this section, we explore the empirical performance of TR-IP-SSQP (Algorithm \ref{Alg:STORM})\footnote{Code and datasets are available at \hyperlink{https://github.com/ychenfang/TRIPSSQP}{https://github.com/ychenfang/TRIPSSQP}}.  We first evaluate its performance on a subset of inequality-constrained problems from the CUTEst test set \citep{Gould2014CUTEst} and then on constrained logistic regression problems using UCI datasets \cite{kelly2023uci} and synthetic datasets subject to various equality and inequality constraints.

To compare adaptive and fixed sampling techniques, we benchmark TR-IP-SSQP against its \textit{fully stochastic} counterpart, referred to as Fully-TR-IP-SSQP. Fully-TR-IP-SSQP follows the framework of \cite{Fang2024Fully} while accommodating inequality constraints via interior-point methods. This variant employs a fixed sampling scheme in which a single sample is drawn per iteration to estimate the objective gradient; no objective value estimate is required, and the iterates are always updated. The main modifications relative to \cite{Fang2024Fully} are as follows.  In generating the trust-region radius, we replace $\|\bar{\nabla} \L_k\|$ with $\|\bar{Q}_k\|$. The (rescaled) trial step is computed as described in Section \ref{sec: Computing the (rescaled) trial step}, with the merit function defined in \eqref{merit_func}. The threshold for merit parameter (i.e., \eqref{eq:threshold_Predk}) is replaced by $\text{Pred}_k\leq \|\bar{Q}_k\|\min\{\Delta_k,\epsilon_s\}+1/2 \|\bar{W}_k\|\min\{\Delta_k^2,\epsilon_s^2\}$.

\subsection{Algorithm setup}

For TR-IP-SSQP, batches of samples are generated at each iteration to estimate the objective gradient and value. We use $|\cdot|$ to denote the cardinality of a sample set, so that $|\xi_k^g|$ and $|\xi_k^f|,|\xi_{s_k}^f|$ denote the sample sizes used to estimate the objective gradient and values at iteration $k$, respectively. These are chosen according to\vskip-0.4cm
\begin{align*}
|\xi_k^g|=\frac{C_g}{p_g\kappa_{g}^2\Delta_k^2} \quad \text{ and }\quad
|\xi_k^f|= |\xi_{s_k}^f|=\frac{C_f}{\min\{p_f\kappa_f^2\Delta_k^4,\bar{\epsilon}_k^2\}},
\end{align*}\vskip-0.2cm
\noindent where $C_g$ and $C_f$ are positive constants. For numerical stability, an upper bound on the sample size is imposed in each experiment as specified below. Within a given iteration, $\xi_k^g$ and $\xi_k^f,\xi_{s_k}^f$ are allowed to be dependent, while samples are drawn independently across iterations.

The parameters for TR-IP-SSQP are set as follows: $\barmu_0=\Delta_0=\barepsilon_0=1,\Delta_{\max}=10,\rho=\gamma=1.5,p_g=p_f=0.05,\eta=0.6, \zeta=0.5, \epsilon_s=0.9, \kappa_{fcd}= 1, \kappa_g=0.01, \kappa_f=0.0005, C_g=C_f=5$. For Fully-TR-IP-SSQP, following \cite{Fang2024Fully}, we set $\zeta = 10, \delta = 10,\mu_{-1} = 1$, $\rho = 1.5$ and $\beta_k=0.5$ in their notation.

Since trust-region methods permit indefinite Hessian matrices, we try four different $\barH_k$ for both methods in all problems:
\begin{enumerate}
    \item Identity matrix (Id). This choice is used in various existing SSQP literature, see, e.g., \cite{Berahas2021Stochastic,Berahas2021Sequential,Na2022adaptive,Na2021Inequality}.
    \item Symmetric rank-one (SR1) update.  We initialize $\barH_{0}=I$ and for $k\geq 1$, $\barH_k$ is updated as \vskip-0.5cm
    \begin{equation*}
    \barH_{k}=\barH_{k-1}+\frac{(\boldsymbol{y}_{k-1}-\barH_{k-1}\Delta\bx_{k-1})(\boldsymbol{y}_{k-1}-\barH_{k-1}\Delta\bx_{k-1})^T}{(\boldsymbol{y}_{k-1}-\barH_{k-1}\Delta\bx_{k-1})^T\Delta\bx_{k-1}},
    \end{equation*}\vskip-0.2cm
\noindent where $\boldsymbol{y}_{k-1}=\bar{\nabla}_{\bx}\L_{k}-\bar{\nabla}_{\bx}\L_{k-1}$, and $\Delta\bx_{k-1}=\bx_{k}-\bx_{k-1}$.
\item Estimated Hessian (EstH). At the $k$-th iteration, we estimate the objective Hessian matrix $\bar{\nabla}^2f_k$ using \textit{one} single sample and set $\barH_k=\bar{\nabla}_{\bx}^2\L_k=\bar{\nabla}^2f_k+\sum_{i=i}^m\bar{\boldsymbol{\lambda}}_k^{(i)}\nabla^2 c^{(i)}_k + \sum_{i=i}^n\boldsymbol{\tau}_k^{(i)}\nabla^2 h^{(i)}_k$ with $\bar{\blambda}_k=-[G_kG_k^T]^{-1}G_k\bigl(\barg_k+\theta_k J_k^T S_k^{-1}\boldsymbol{e}\bigr)$.
    \item Averaged Hessian (AveH). At the $k$-th iteration, we estimate the Hessian matrix as in EstH and set $\barH_k=\frac{1}{50}\sum_{i=k-49}^{k}\bar{\nabla}_{\bx}^2\L_i$.
\end{enumerate}

\subsection{CUTEst}\label{subsec:CUTEst}

We implement 22 problems from the CUTEst test set. All implemented problems (1) have a non-constant objective with inequality or bound constraints, (2) satisfy $d < 1000$, (3) do not report singularity of $G_kG_k^T$ during the iteration process, (4) satisfy LICQ at the optimal solution, and (5) admit convergence under at least one method and algorithm configuration. The initialization provided by the CUTEst package is used throughout. We generate objective model estimates based on deterministic evaluations provided by the CUTEst package. Specifically, the estimate of $f_k$ is drawn from $\N(f_k, \sigma^2)$, the estimate of $\nabla f_k$ is drawn from $\N(\nabla f_k, \sigma^2(I+\boldsymbol{1}\boldsymbol{1}^T))$, where $\boldsymbol{1}$ denotes the $d$-dimensional all-one vector. The $(i,j)$ (and $(j,i)$) entry of the estimate of $\nabla^2 f_k$ is drawn from $\N((\nabla^2 f_k)_{i,j},\sigma^2)$. We experiment with four different noise levels: $\sigma^2 \in \{10^{-8}, 10^{-4}, 10^{-2}, 10^{-1}\}$. 
Three experiments are conducted:

\noindent $\bullet$ \textbf{Experiment 1:} We investigate the effect of the barrier parameter schedule on the performance of TR-IP-SSQP. Four schedules are considered: $ \theta_k = \{0.9999^k,0.999^k,k^{-0.1},k^{-0.5}\}$. The computational budget is set to $10^5$ iterations, with a maximum sample size of $10^4$ for each estimate.

\noindent $\bullet$ \textbf{Experiment 2:} We examine the impact of different Hessian constructions on the performance of TR-IP-SSQP. The barrier parameter is fixed as $\theta_k = 0.9999^k$, and the total computational budget is set to $10^7$ floating-point operations (flops). We only count operations for Hessian construction, and other $\mathcal{O}(d^2)$ operations, while omitting $\mathcal{O}(d)$ operations. In accounting for the computational cost of Hessian construction, we assume an $\cO(d)$ cost when using the identity matrix (Id), and an $\cO(d^2)$ cost when using SR1, an estimated Hessian (EstH), or an averaged Hessian (AveH). We set a maximum sample size of $10^4$ for each estimate.

\noindent $\bullet$ \textbf{Experiment 3:} We compare TR-IP-SSQP and Fully-TR-IP-SSQP to assess adaptive versus fixed sampling. We fix $\theta_k=0.9999^k$, set the computational budget to $10^7$ gradient evaluations, and impose a maximum sample size of $10^3$ for each estimate under adaptive sampling.

For each algorithm and problem instance, under each noise level, we evaluate \vskip-0.5cm
\begin{equation*}
    \text{Relative KKT Residual} = \frac{\|(\nabla f_k + G_k^T\boldsymbol{\lambda}_k + J_k^T\btau_k; c_k; \min\{-h_k,\btau_k\})\|}{\max\{\|(\nabla f_0 + G_0^T\boldsymbol{\lambda}_0 + J_0^T\btau_0; c_0; \min\{-h_0,\btau_0\})\|,1\}},
\end{equation*}\vskip-0.1cm
\noindent and terminate the algorithm if $\text{Relative KKT Residual}\leq 10^{-4}$ or the computational budget is exhausted. For Experiment 1, we report the relative KKT residual averaged over 5 independent runs. For Experiment 2 and 3, we report the performance ratio averaged over 5 independent runs, which measures the proportion of test problems that a method solves within a given computational budget.

\begin{figure}[t]
	\centering
	\subfigure[$\theta_k=0.9999^k$]{\includegraphics[width=0.45\textwidth]{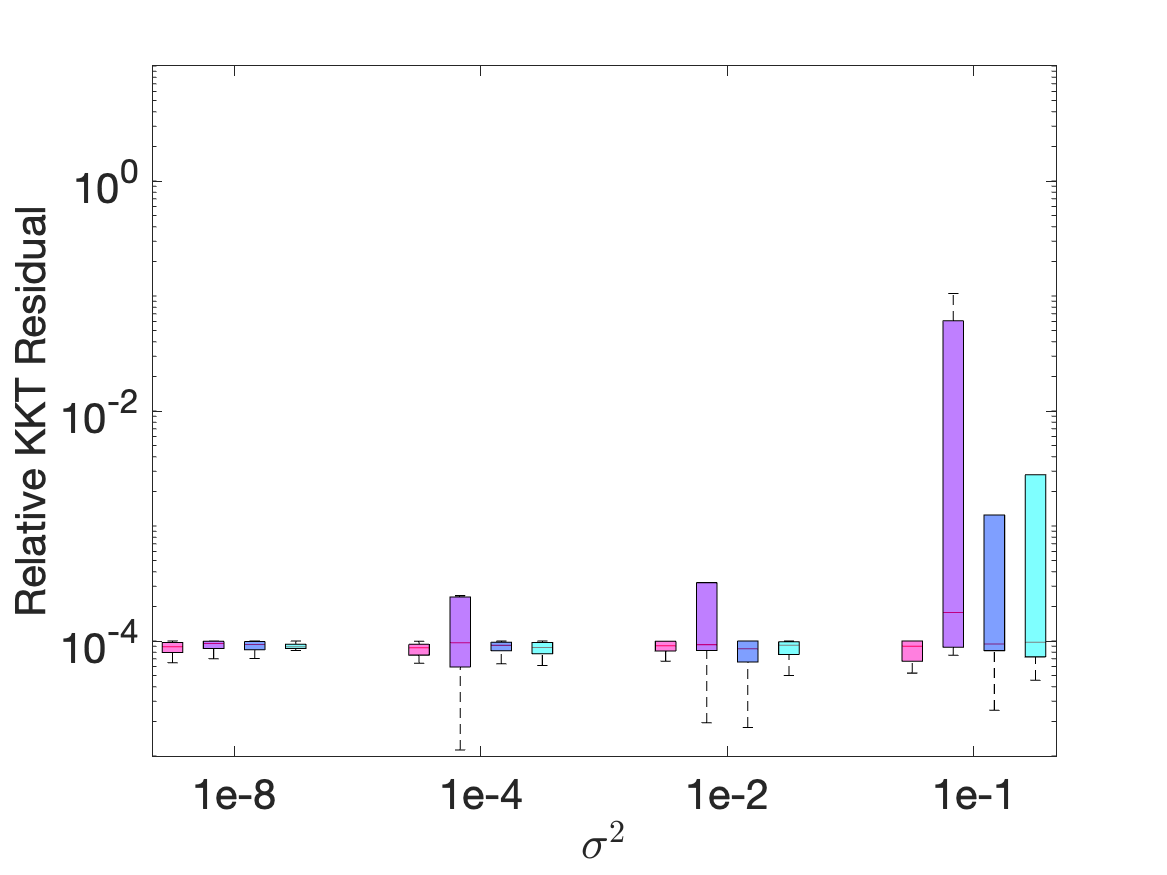}}
	\quad
	\subfigure[$\theta_k=0.999^k$]{\includegraphics[width=0.45\textwidth]{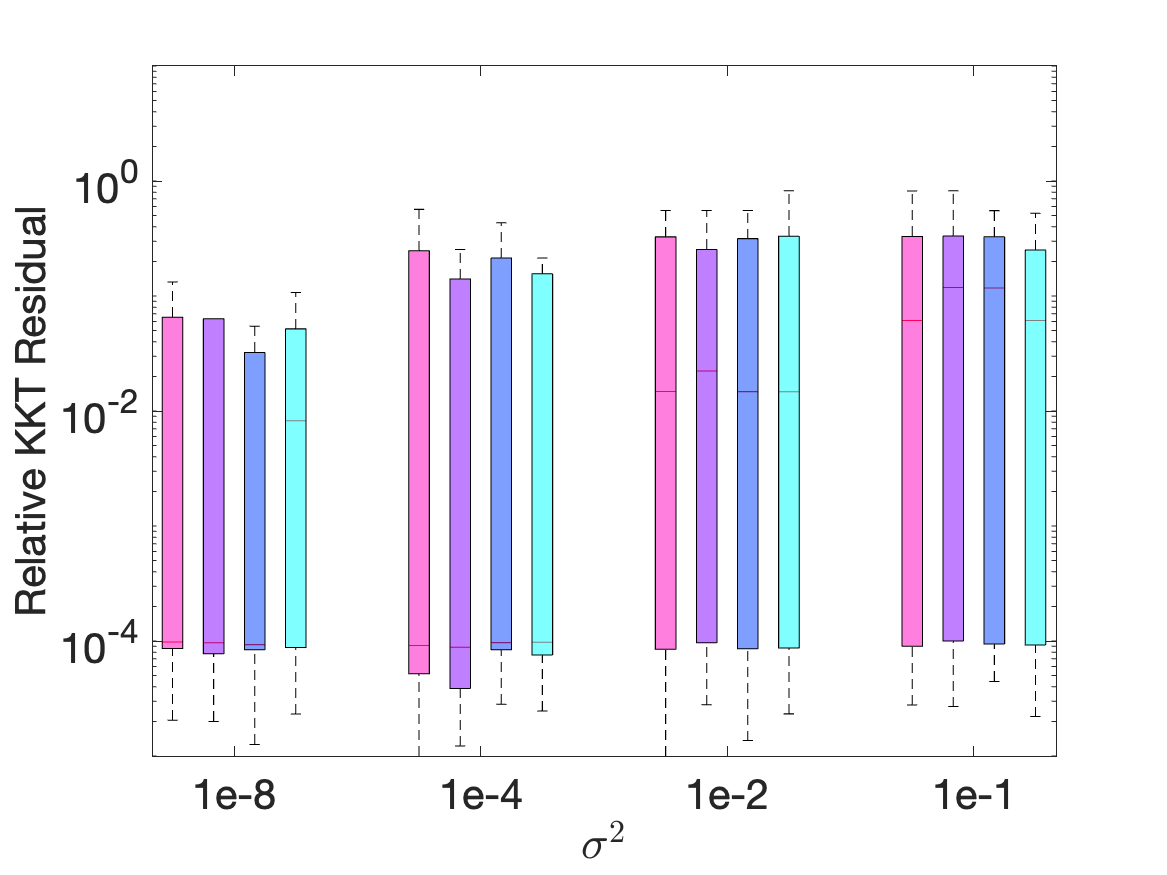}}
	\quad
    \subfigure[$\theta_k=k^{-0.1}$]{\includegraphics[width=0.45\textwidth]{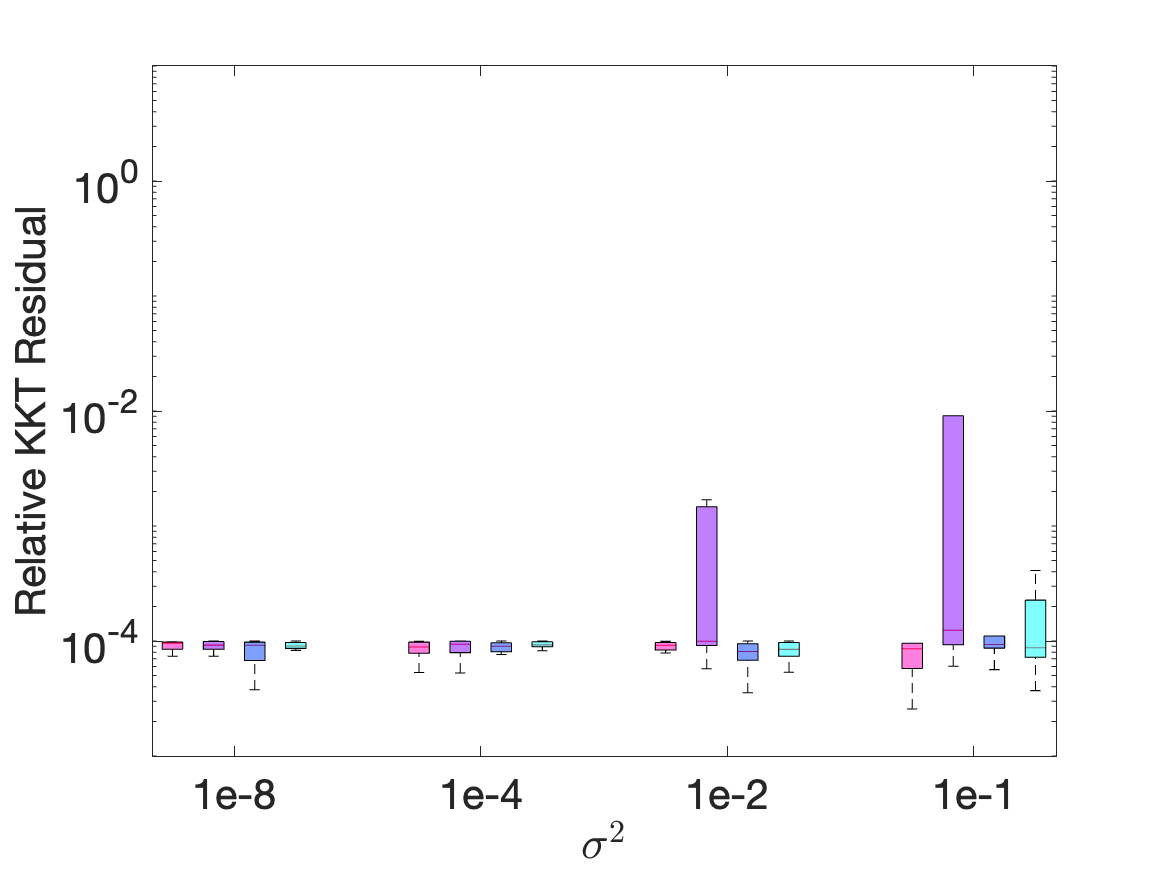}}
	\quad
	\subfigure[$\theta_k=k^{-0.5}$]{\includegraphics[width=0.45\textwidth]{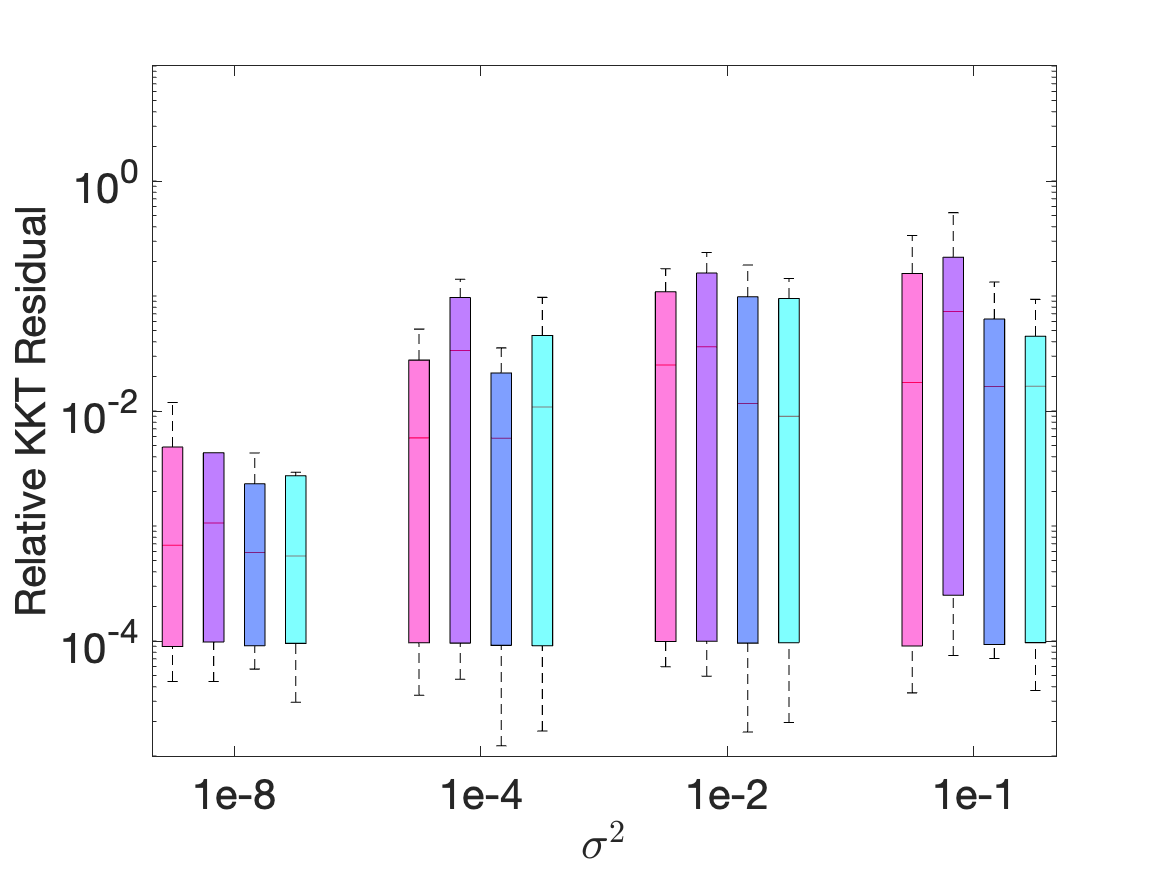}}
	\quad
	\subfigure{\includegraphics[width=0.85\textwidth]{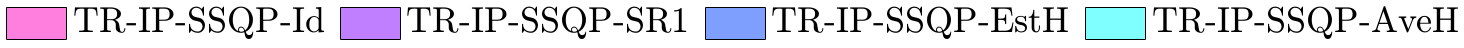}}
	\caption{Relative KKT residuals of TR-IP-SSQP with different barrier parameter setups. In every plot, each box corresponds to TR-IP-SSQP with one kind of Hessian construction.}
	\label{fig:cutest}
\end{figure}

\begin{figure}
	\centering
	\subfigure[$\sigma^2=1e-8$]{\includegraphics[width=0.45\textwidth]{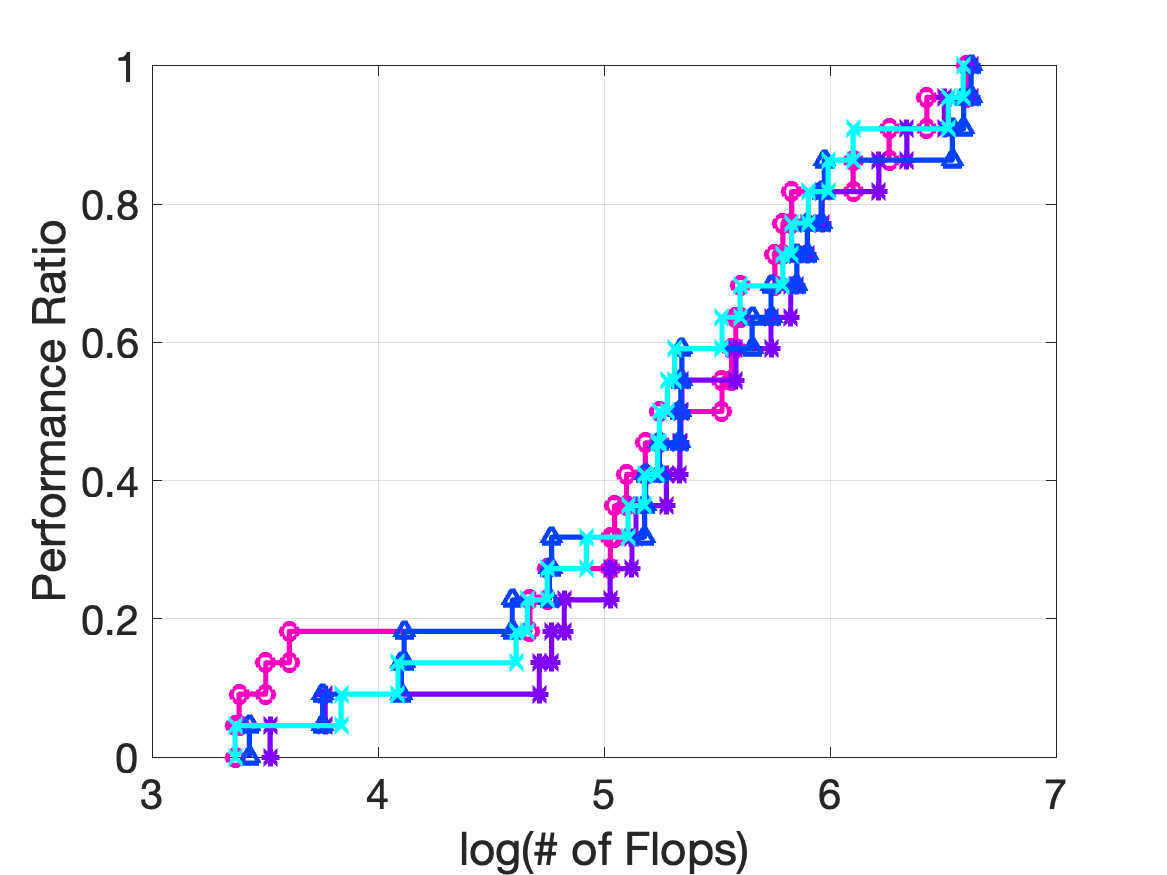}}
	\quad
	\subfigure[$\sigma^2=1e-4$]{\includegraphics[width=0.45\textwidth]{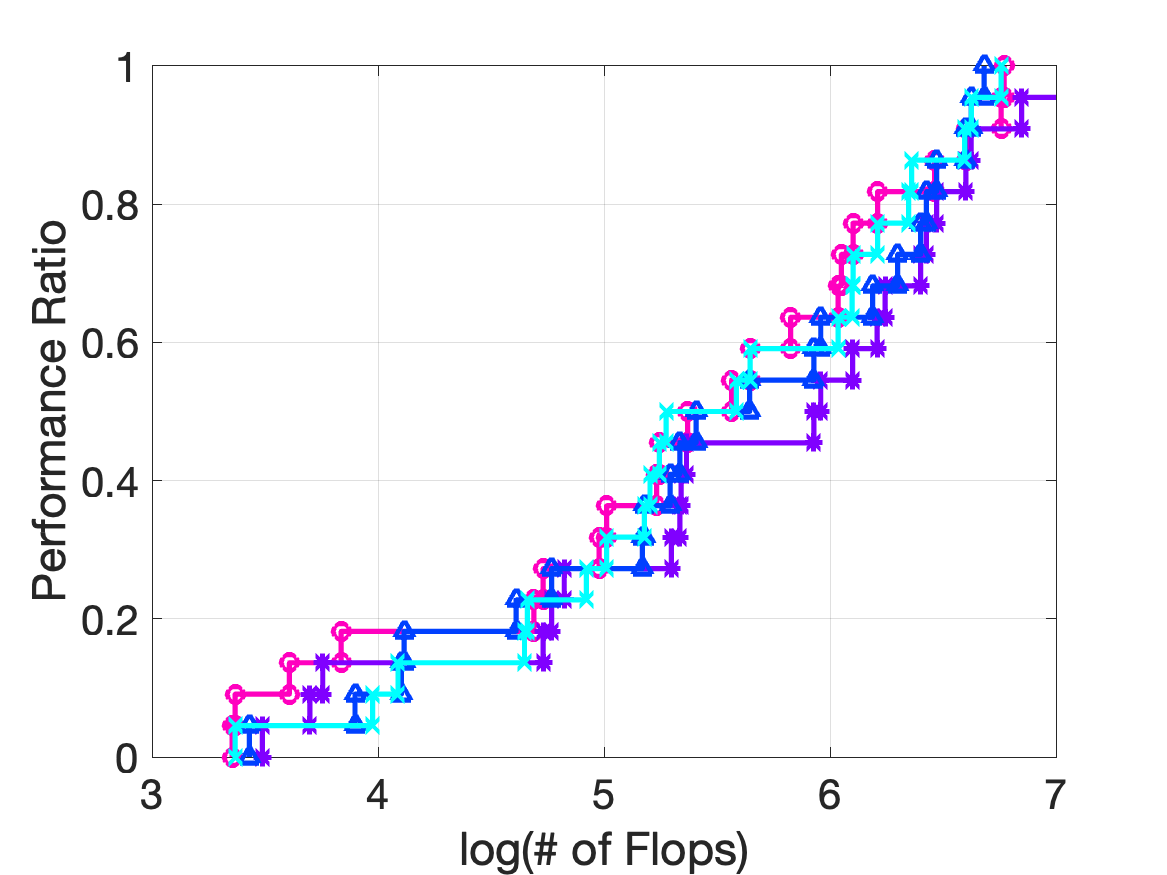}}
	\quad
    \subfigure[$\sigma^2=1e-2$]{\includegraphics[width=0.45\textwidth]{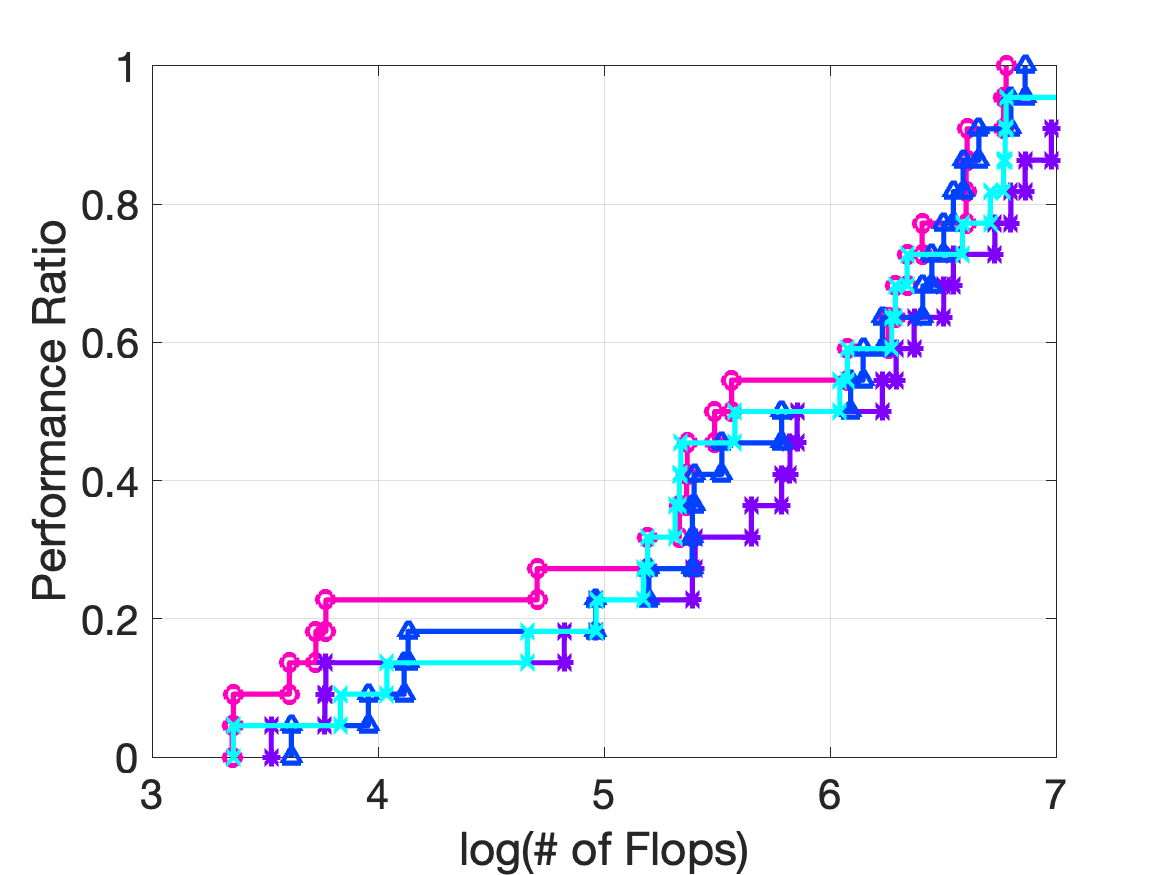}}
	\quad
	\subfigure[$\sigma^2=1e-1$]{\includegraphics[width=0.45\textwidth]{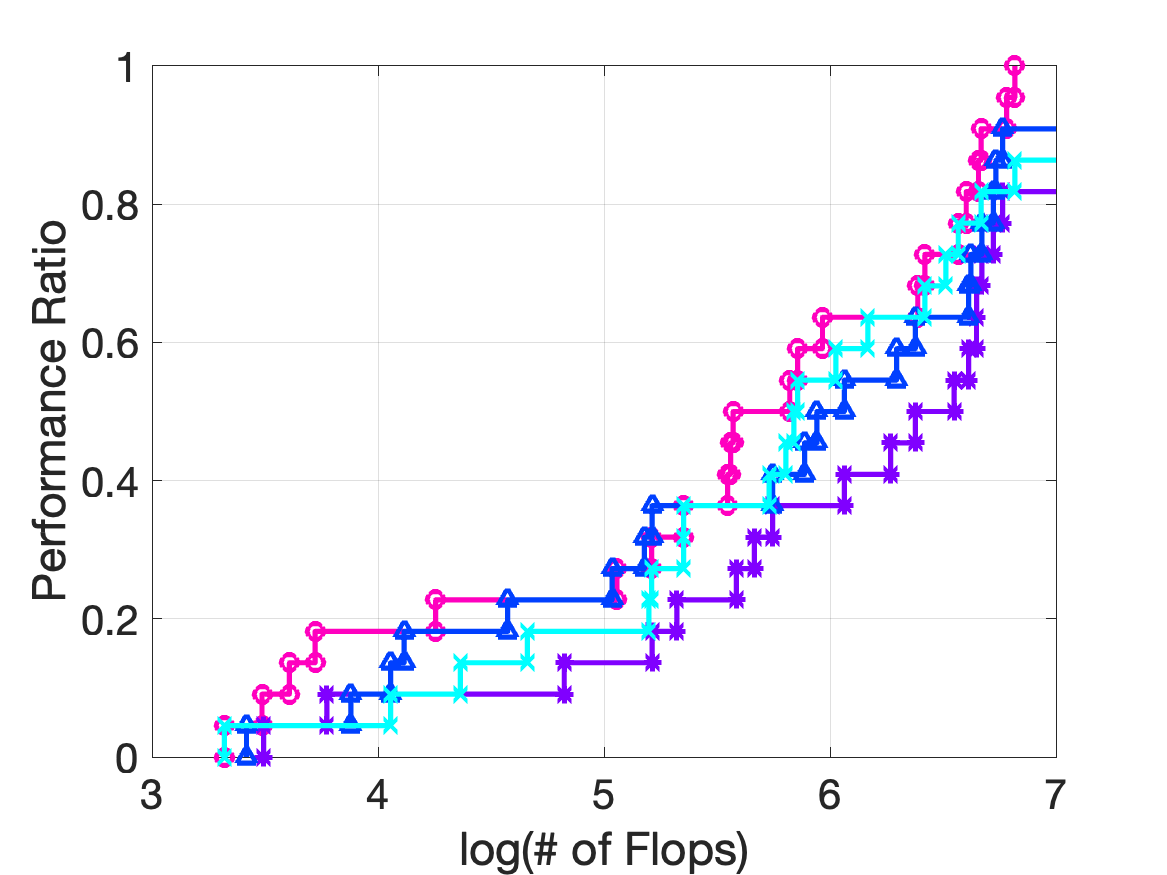}}
	\quad
	\subfigure{\includegraphics[width=0.85\textwidth]{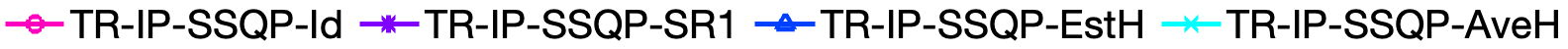}}
	\caption{Performance Profile of TR-IP-SSQP when $\theta_k=0.9999^k$. Each trajectory represents one algorithm configuration.}
	\label{fig:cutest2}
\end{figure}

Results of Experiment 1 and 2 are presented in Figures~\ref{fig:cutest} and \ref{fig:cutest2}, respectively. Several observations can be drawn.
First, when the barrier parameter 
$\theta_k$ decays slowly (Figures~\ref{fig:cutest}(a) and \ref{fig:cutest}(c)), all methods achieve high solution accuracy under low noise levels. In particular, when $\sigma^2=10^{-8}$, the relative KKT residuals produced by all variants remain close to $10^{-4}$ with very small variability across problems. This demonstrates that TR-IP-SSQP reliably satisfies stationarity conditions in nearly deterministic settings, regardless of the Hessian approximation employed. As the noise level increases, however, the performance of most methods deteriorates even under slowly decaying $\theta_k$, except for TR-IP-SSQP-Id. This suggests that the degradation primarily arises from noise introduced in Hessian estimation rather than from the algorithmic framework itself. Notably, AveH no longer yields a discernible performance improvement in this regime, in contrast to observations reported in \cite{Na2022Hessian,Fang2024Fully,Fang2024Trust,Fang2025High}. In particular, Figures~\ref{fig:cutest}(a) and \ref{fig:cutest2}(d) show that AveH may even underperform EstH. A plausible explanation is that $\theta_k$ directly enters the Hessian construction and decreases monotonically over iterations; consequently, averaging Hessians generated under progressively smaller $\theta_k$ values may not yield more stable curvature information. Furthermore, the SR1 variant exhibits substantially larger dispersion and more pronounced performance degradation, indicating that quasi-Newton updates are more sensitive to stochastic perturbations. As shown in Figure~\ref{fig:cutest2}, even on datasets where the SR1 variant converges, it typically requires more flops, indicating a higher practical computational cost.
Finally, the decay schedule of $\theta_k$ plays a critical role in robustness under noise. When $\theta_k$ decreases slowly (Figures \ref{fig:cutest}(a) and \ref{fig:cutest}(c)), residual growth remains well controlled even at moderate noise levels (e.g., $\sigma^2=10^{-4}$ or $10^{-2}$). In contrast, faster decay schedules, as shown in Figures \ref{fig:cutest}(b) and \ref{fig:cutest}(d), lead to significantly larger residuals even when the noise level is small. This behavior is consistent with the role of 
$\theta_k$ in controlling the barrier penalty: if $\theta_k$ decays too rapidly, the barrier effect weakens prematurely, causing the interior-point mechanism to lose effectiveness at an early stage and resulting in degraded solution quality.

The results of Experiment 3 are summarized in Figure~\ref{fig:cutest3}. At low noise level ($\sigma^2=10^{-8}$), adaptive sampling with all four Hessian constructions, as well as fixed sampling with Identity and SR1 Hessians, yields strong performance, with convergence achieved on nearly all problem instances. This confirms the correctness of the fully online algorithmic design and demonstrates that fixed sampling can match adaptive sampling when the noise level is sufficiently small. However, even at this low noise level, Fully-TR-IP-SSQP-EstH and Fully-TR-IP-SSQP-AveH exhibit worse performance, suggesting that the stable exploitation of Hessian information under fixed sampling remains an open question. We further observe that TR-IP-SSQP-EstH and TR-IP-SSQP-AveH achieve superior performance at small to moderate noise levels, but deteriorate when $\sigma^2=10^{-1}$, a pattern consistent with the findings of Experiment 1.
As the noise level increases, performance degrades for both sampling strategies, but more markedly for fixed sampling — particularly for Fully-TR-IP-SSQP-Id. This indicates that fixed sampling is more sensitive to noise, while adaptive sampling, by virtue of its data-driven sample size selection, maintains greater robustness. The one exception is TR-IP-SSQP-SR1, which performs poorly for $\sigma^2>10^{-8}$, further underscoring the open challenge of applying quasi-Newton updates stably in stochastic settings.

\begin{figure}
	\centering
	\subfigure[$\sigma^2=1e-8$]{\includegraphics[width=0.45\textwidth]{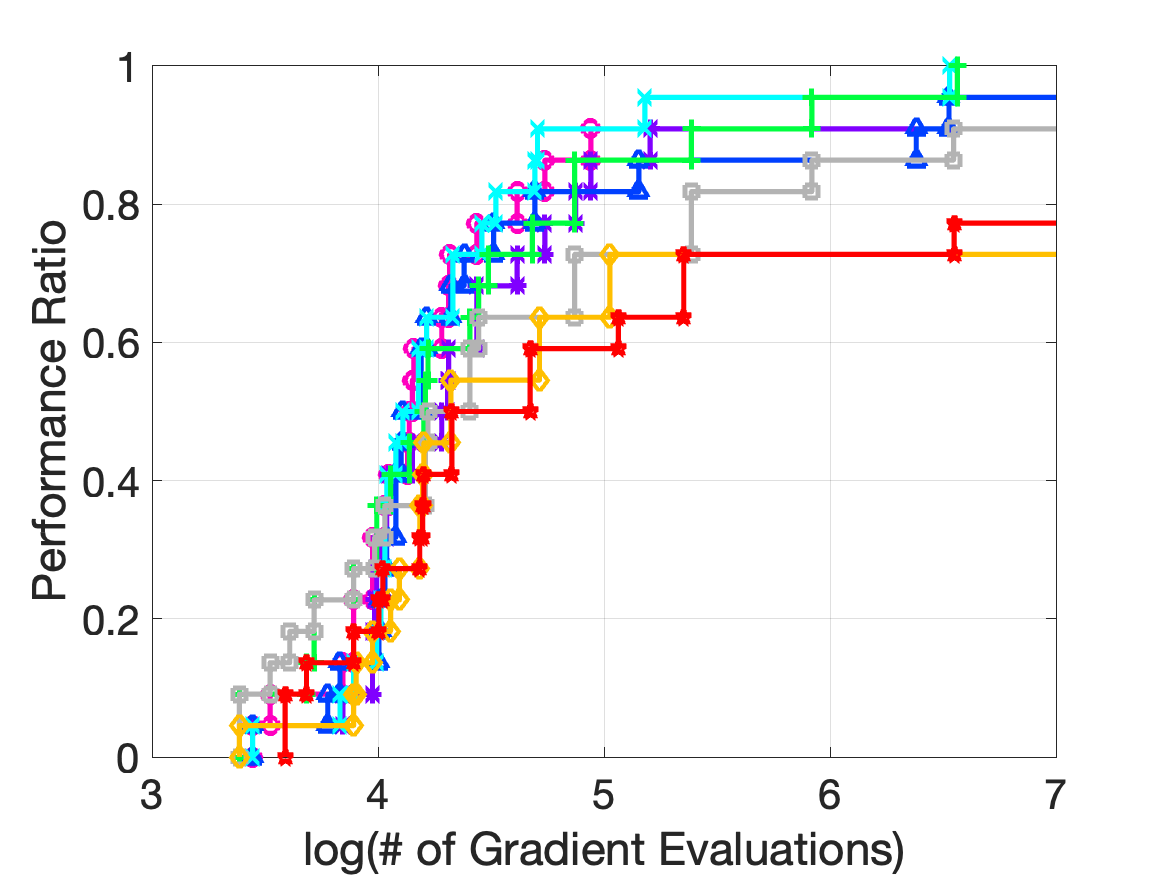}}
	\quad
	\subfigure[$\sigma^2=1e-4$]{\includegraphics[width=0.45\textwidth]{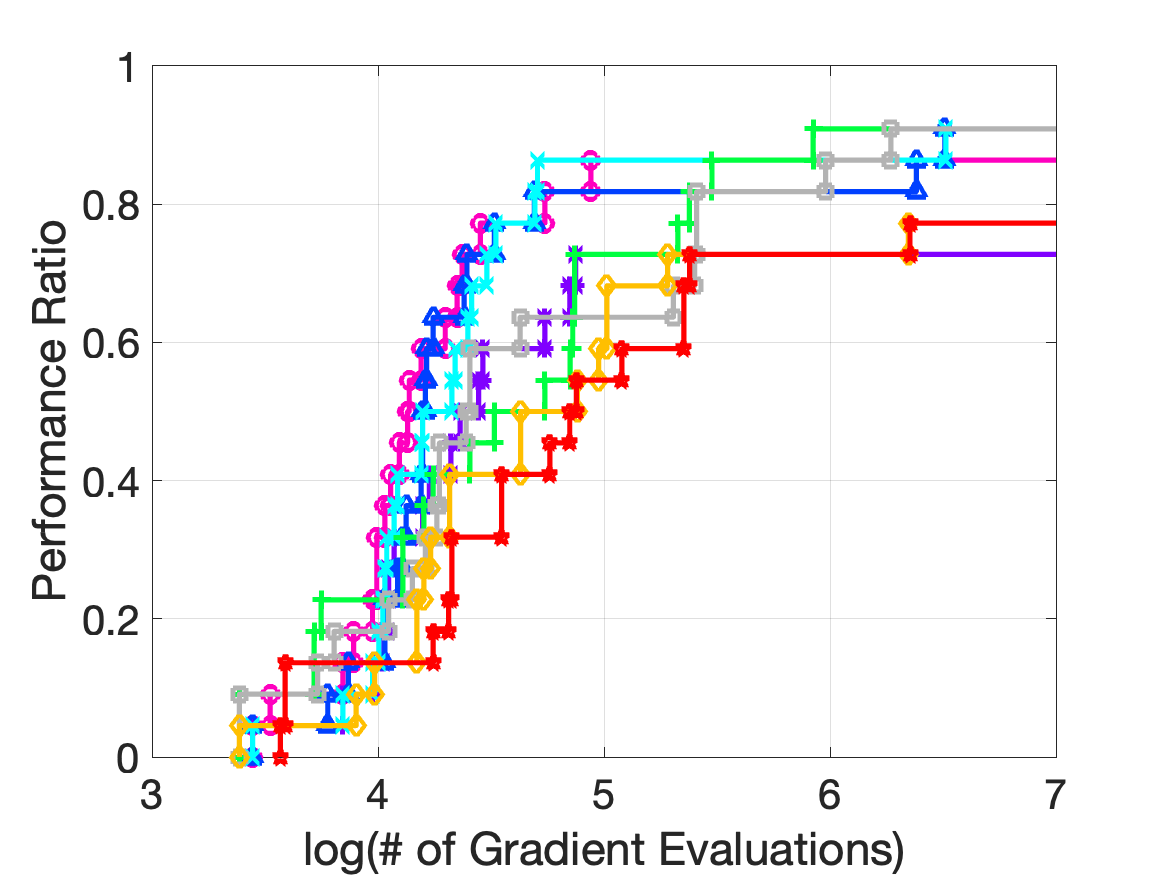}}
	\quad
    \subfigure[$\sigma^2=1e-2$]{\includegraphics[width=0.45\textwidth]{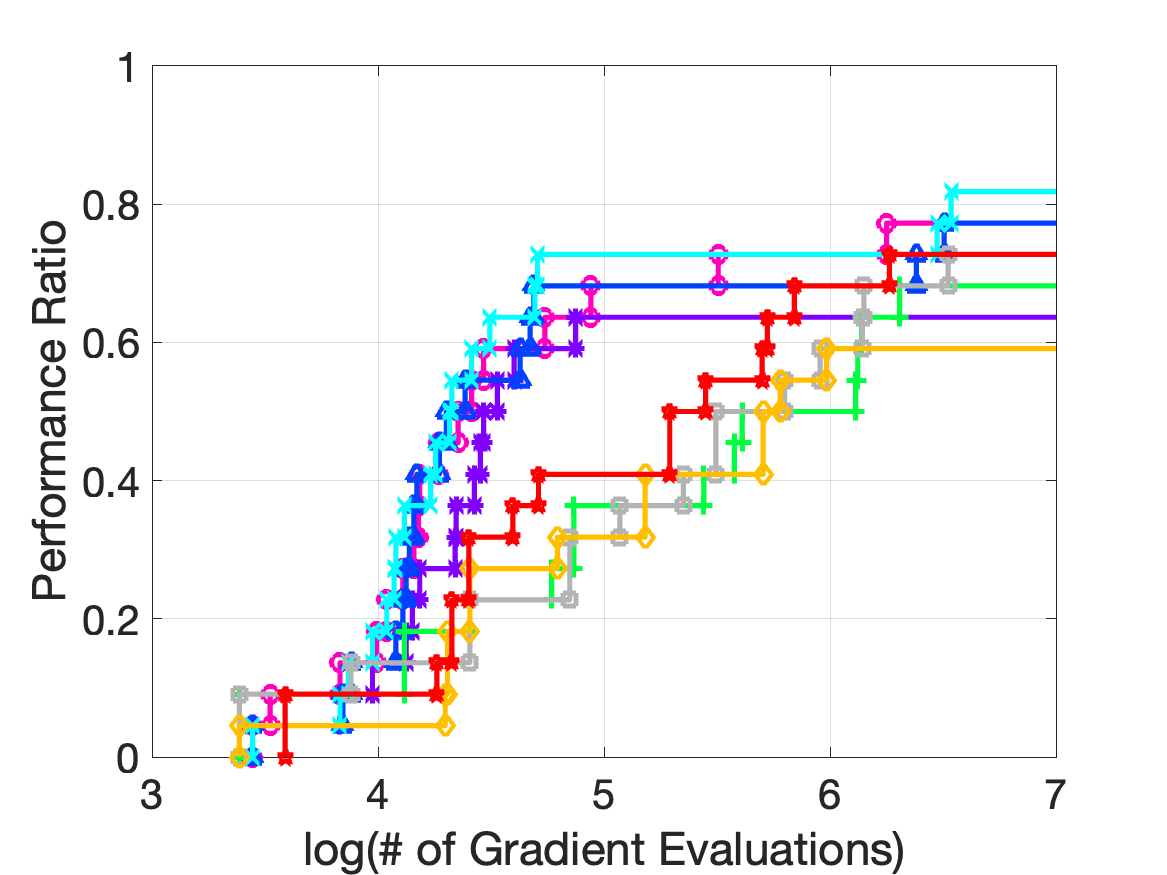}}
	\quad
	\subfigure[$\sigma^2=1e-1$]{\includegraphics[width=0.45\textwidth]{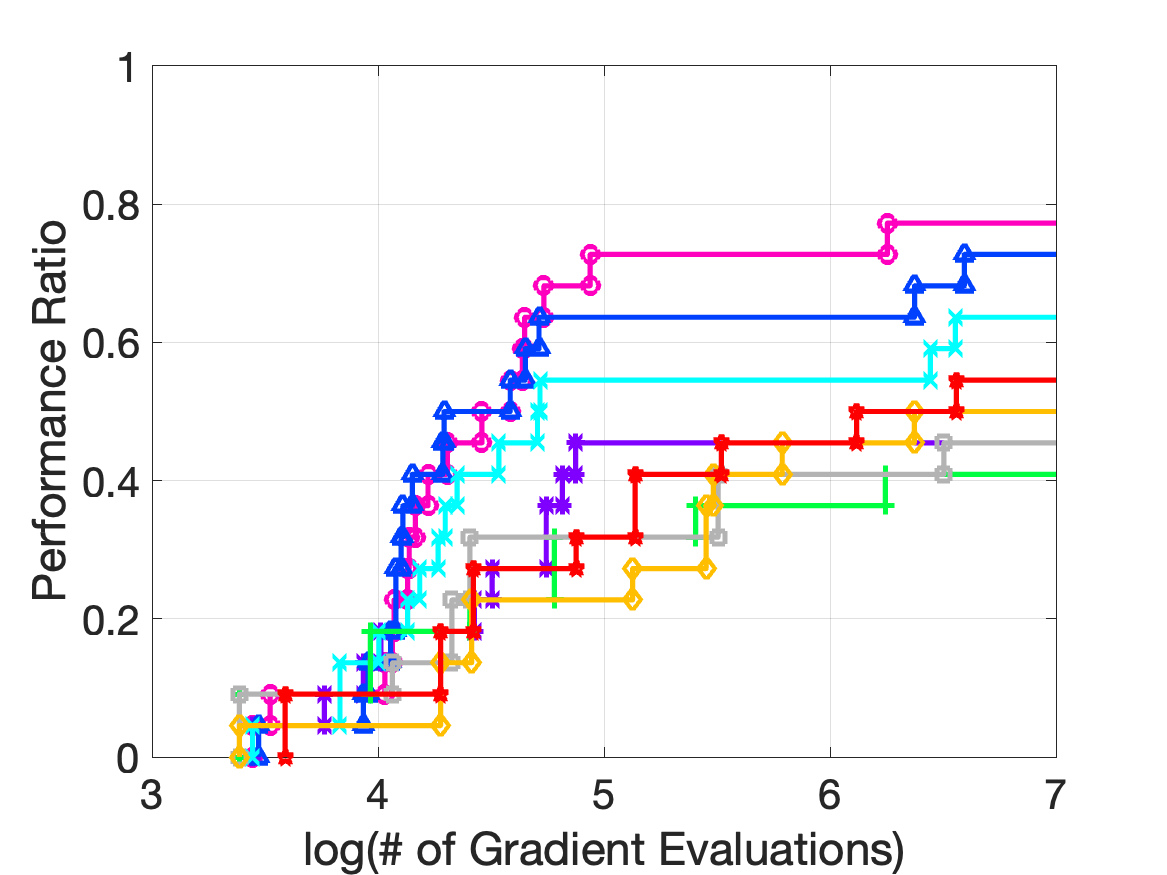}}
	\quad
	\subfigure{\includegraphics[width=0.85\textwidth]{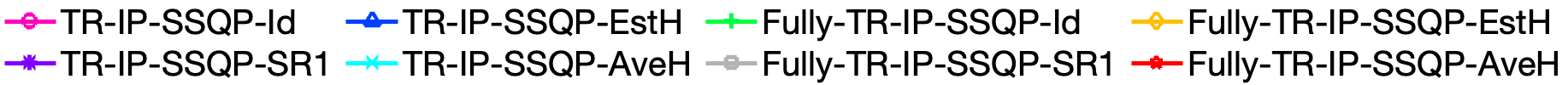}}
	\caption{Performance Profile of TR-IP-SSQP and Fully-TR-IP-SSQP when $\theta_k=0.9999^k$. Each trajectory represents one algorithm configuration.}
	\label{fig:cutest3}
\end{figure}

\subsection{Constrained Logistic regression}

We consider problems of the form:\vskip-0.5cm
\begin{align*}
\min_{\bx\in\mR^{d}} f(\bx)=\frac{1}{N}\sum_{i=1}^{N}\log\left(1+e^{-y_i(\boldsymbol{z}_i^T\bx)}\right)\quad \text{s.t.} \quad A\bx=\boldsymbol{b},\quad \|\bx\|^2 \leq c
\end{align*}\vskip-0.2cm
\noindent where $\{(\boldsymbol{z}_i,y_i)\}_{i=1}^N$ are $N$ samples, with $\boldsymbol{z}_i\in\mR^d$ and $\boldsymbol{y}_i\in\{-1,+1\}$ as the label. 
The parameter matrices $A\in\mR^{5\times d}$ and $\boldsymbol{b}\in\mR^5$ are generated with entries following a standard normal distribution, while ensuring that $A$ has full row rank. We set $c=1+\sigma^2$ with $\sigma$ following a standard normal distribution.
We experiment with four datasets: \texttt{German Credit Data, Mushroom} from the UCI dataset \cite{kelly2023uci}, and \texttt{normal, exponential}, which are synthetic. For \texttt{normal} and \texttt{exponential} datasets, we set $d=15, N=60,000$, with 30,000 samples for each class. In the \texttt{normal} dataset, if $y_i=1$, then each entry of $\boldsymbol{z}_i$ is generated following $\N(0,1)$; if $y_i=-1$, then each entry of $\boldsymbol{z}_i$ is generated following $\N(5,1)$. In the \texttt{exponential} dataset, if $y_i=1$, then each entry of $\boldsymbol{z}_i$ is generated following $\exp(1)$; if $y_i=-1$, then each entry of $\boldsymbol{z}_i$ is generated following $5+\exp(1)$. In each iteration, we subsample from the $N$ samples to estimate objective values and gradients. For each dataset, 10 independent combinations of $\{A,\boldsymbol{b},c\}$ are generated, resulting in 40 problem instances. The initialization for all instances is set to a vector of appropriate dimension with each entry following a standard normal distribution. The computational budget is set to 200 epochs (effective passes over the dataset), and the stopping condition is $\text{Relative KKT Residual}\leq 10^{-4}$ or exhaustion of the computational budget.

\begin{figure}
	\centering
	\subfigure[$\theta_k=0.9999^k$]{\includegraphics[width=0.45\textwidth]{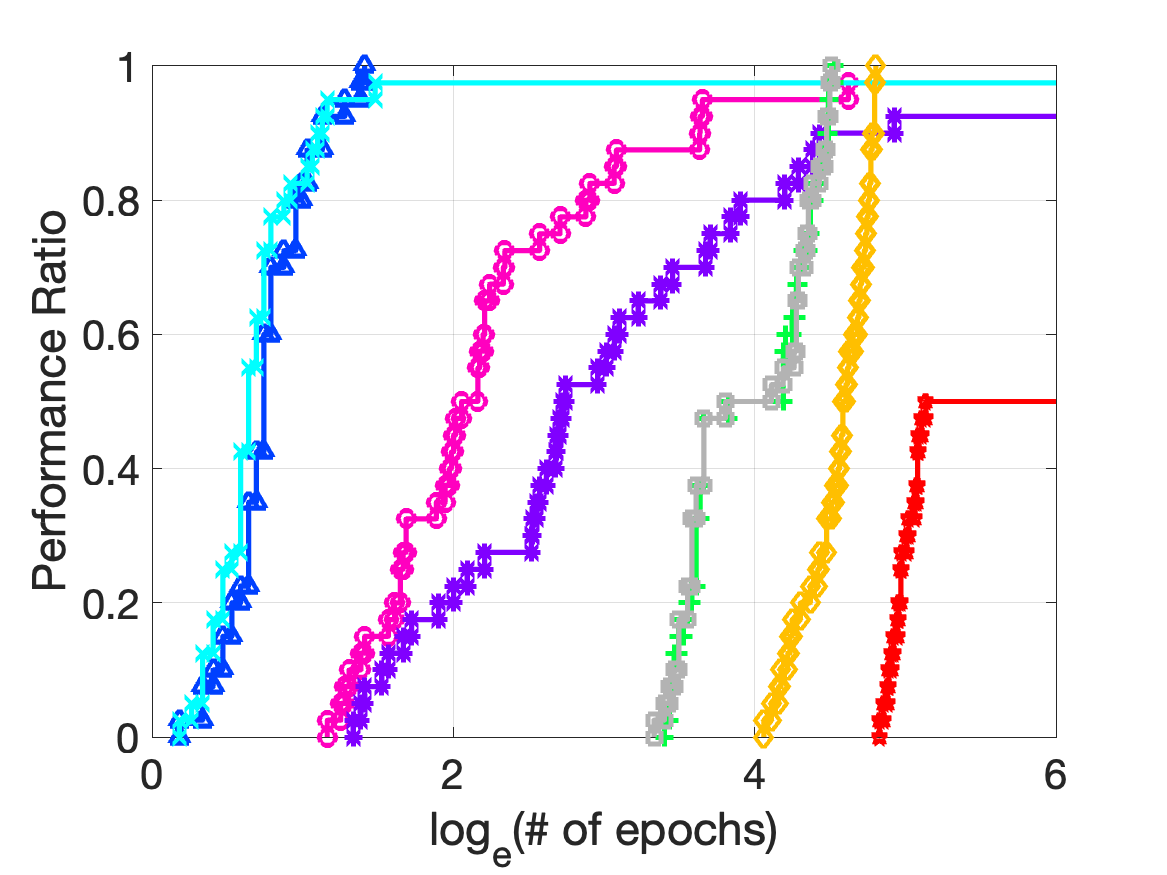}}
	\quad
	\subfigure[$\theta_k = k^{-0.1}$]{\includegraphics[width=0.45\textwidth]{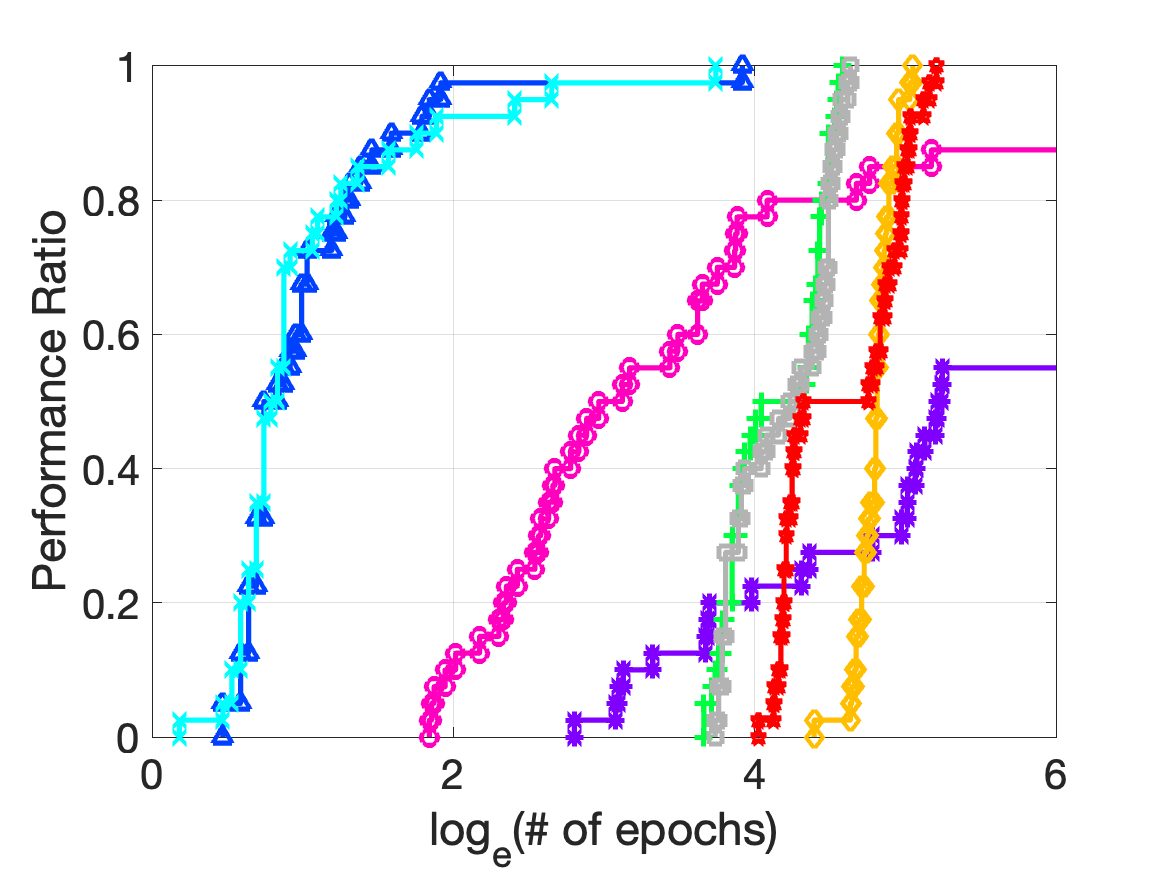}}
	% \quad
	% \subfigure[\texttt{normal} dataset]{\includegraphics[width=0.45\textwidth]{AdapStoSQP/Figures/LogitKKT3.png}}
 %    \quad
	% \subfigure[\texttt{exponential} dataset]{\includegraphics[width=0.45\textwidth]{AdapStoSQP/Figures/LogitKKT4.png}}
 %    \quad
	 \subfigure{\includegraphics[width=0.85\textwidth]{Figures/label3.png}}
	\caption{Performance Profile of TR-IP-SSQP and Fully-TR-IP-SSQP over 40 constrained Logistic regression problem instances. Each trajectory represents one method.}
	\label{fig:logit}
\end{figure}

We report in Figure~\ref{fig:logit} the performance profiles averaged over five independent runs. The results indicate that incorporating curvature information substantially improves efficiency in this experiments. In particular, TR-IP-SSQP equipped with estimated or averaged Hessian information (EstH and AveH) consistently dominates the identity (Id) and SR1 variants, achieving high performance ratios with significantly fewer epochs.
Moreover, when Id, EstH, or AveH are employed, TR-IP-SSQP consistently outperforms Fully-TR-IP-SSQP, requiring fewer epochs than Fully-TR-IP-SSQP on the majority of problem instances. In contrast, as observed in Section \ref{subsec:CUTEst}, the SR1 update degrades TR-IP-SSQP performance under both $\theta_k$ schedules.

\section{Conclusion}\label{sec:6}

We proposed a trust-region interior-point stochastic sequential quadratic programming (TR-IP-SSQP) method for identifying first-order stationary points of optimization problems with stochastic objectives and deterministic nonlinear equality and inequality constraints. At each iteration, we constructed stochastic oracles that enforce proper adaptive accuracy conditions with a fixed probability. The method adopts a single-loop interior-point framework in which the barrier parameter follows a user-specified diminishing sequence. Under reasonable assumptions, we~established liminf-type almost-sure convergence to first-order stationary points.~We~demonstrated the practical performance of our method on a subset of problems from the CUTEst test set and on logistic regression problems.

%\appendix
%\section{An example appendix} 

\bibliographystyle{siamplain}
\bibliography{ref}
\end{document}